\newcommand\tikzif[2][]{
\tikzifinpicture{#2}{\begin{tikzpicture}[#1]#2\end{tikzpicture}}
}
\tikzset{math mode/.style = {execute at begin node=$, execute at end node=$}}
\tikzset{arrow/.style={postaction={decorate,thick,decoration={markings,mark = at position #1 with {\arrow{>}}}}},arrow/.default=0.5}
\tikzset{invarrow/.style={postaction={decorate,thick,decoration={markings,mark = at position #1 with {\arrow{<}}}}},invarrow/.default=0.5}
\renewcommand\ss{\scriptstyle}
\renewcommand\P{{\mathcal P}}
\def\a_#1{\varphi_{#1}\hspace{-1pt}(t)}
\newcommand\fug{\mathrm{fug}}
\newcommand\wt{\mathrm{wt}}
\newcommand\ZZ{{\mathbb Z}}
\newcommand\CC{{\mathbb C}}
\newcommand\RR{{\mathbb R}}
\theoremstyle{plain}
\newtheorem{thm}{Theorem}
\newtheorem{prop}{Proposition}
\newtheorem{lem}{Lemma}
\theoremstyle{remark}
\newtheorem*{rmk*}{Remark}
\newtheorem*{ex*}{Example}
\title{Honeycombs for Hall polynomials}
\author{Paul Zinn-Justin}
\address{Paul Zinn-Justin, School of Mathematics and Statistics, The University of Melbourne, 
Victoria 3010, Australia}
\email{pzinn@unimelb.edu.au}
\thanks{PZJ was supported by ARC grant FT150100232. He would like to thank A.~Knutson
for numerous discussions as part of a parallel collaboration.}
\date{\today}
\newcommand\rem[2][]{}
\long\def\junk#1{}
\definecolor{myred}{rgb}{0.6,0,0.4}
\definecolor{mygreen}{rgb}{0,0.4,0.5}
\definecolor{myblue}{rgb}{0,0,0}
\def\puzzlescale{0.7}
\tikzset{puznode/.style={font=\scriptsize}}
\tikzset{puz/.style={yscale=-1.732,rotate=45,scale=\puzzlescale,line cap=round}}
\tikzset{bgline/.style={gray,dotted}}
\tikzset{lattice/.style={fill=gray,circle,inner sep=1pt}}
\tikzset{honeyline/.style={black,thin,opaque}}
\def\rawhoneycomb#1{
\newcount\i\i=0
\newcount\j\j=0
\newcount\k\k=0
\foreach\a/\aa/\b/\bb/\c/\cc in {#1} {
\node[lattice] at (\i+1/3,\j+1/3) {};
\global\advance\k by 1 
  \if!\ifnum9<1\a!\else_\fi\edef\at{\a}\else\edef\at{1}\fi
  \if!\ifnum9<1\b!\else_\fi\edef\bt{\b}\else\edef\bt{1}\fi
  \if!\ifnum9<1\c!\else_\fi\edef\ct{\c}\else\edef\ct{1}\fi
  \if!\ifnum9<1\aa!\else_\fi\edef\aat{\aa}\else\edef\aat{1}\fi
  \if!\ifnum9<1\bb!\else_\fi\edef\bbt{\bb}\else\edef\bbt{1}\fi
  \if!\ifnum9<1\cc!\else_\fi\edef\cct{\cc}\else\edef\cct{1}\fi
  \ifnum\at>0
  \foreach\r in {1,...,\at}
  \draw[honeyline,xshift=\r*0.04cm-\at*0.02cm-0.02cm,yshift=-\r*0.02cm+\at*0.01cm+0.01cm] (\i+1/3,\j+1/3) -- (\i+1/3,\j-1/3);
  \node[puznode,myred,\ifnum\j>0 \else opaque\fi] at (\i+1/3,\j) {\a};
  \fi
  \ifnum\bt>0
  \foreach\r in {1,...,\bt}
  \draw[honeyline,yshift=\r*0.02cm-\bt*0.01cm-0.01cm,xshift=\r*0.02cm-\bt*0.01cm-0.01cm] (\i+1/3,\j+1/3) -- (\i-1/3,\j+1);
  \node[puznode,myred,\ifnum\i>0 \else opaque\fi] at (\i,\j+2/3) {\b};
  \fi
  \ifnum\ct>0
  \foreach\r in {1,...,\ct}
  \draw[honeyline,yshift=\r*0.04cm-\ct*0.02cm-0.02cm,xshift=-\r*0.02cm+\ct*0.01cm+0.01cm] (\i+1/3,\j+1/3) -- (\i+1,\j+1/3);
  \node[puznode,myred,\ifnum\size>\k \else opaque\fi] at (\i+2/3,\j+1/3) {\c};
  \fi
  \ifnum\aat>0
  \foreach\r in {1,...,\aat}
  \draw[honeyline,xshift=\r*0.02cm-\aat*0.01cm-0.01cm,yshift=\r*0.02cm-\aat*0.01cm-0.01cm] (\i+1/3,\j+1/3) -- (\i+2/3,\j);
  \node[puznode,mygreen] at (\i+2/3,\j) {\aa};
  \fi
  \ifnum\bbt>0
  \foreach\r in {1,...,\bbt}
  \draw[honeyline,yshift=\r*0.04cm-\bbt*0.02cm-0.02cm,xshift=-\r*0.02cm+\bbt*0.01cm+0.01cm] (\i+1/3,\j+1/3) -- (\i,\j+1/3);
  \node[puznode,mygreen] at (\i,\j+1/3) {\bb};
  \fi
  \ifnum\cct>0
  \foreach\r in {1,...,\cct}
  \draw[honeyline,xshift=\r*0.04cm-\cct*0.02cm-0.02cm,yshift=-\r*0.02cm+\cct*0.01cm+0.01cm] (\i+1/3,\j+1/3) -- (\i+1/3,\j+2/3);
  \node[puznode,mygreen] at (\i+1/3,\j+2/3) {\cc};
  \fi
\global\advance\i by 1
\ifnum\k=\size\global\i=0\global\advance\j by 1\global\k=\j
\fi
}
}
\def\rawpuzzlex#1{
\newcount\i\i=0
\newcount\j\j=0
\newcount\k\k=0
\foreach\a/\aa/\b/\bb/\c/\cc in {#1} {
\node[puznode,myred] at (\i+1/3,\j) {\a};
\node[puznode,myred] at (\i,\j+2/3) {\b};
\node[puznode,myred] at (\i+2/3,\j+1/3) {\c};
\node[puznode,mygreen] at (\i+2/3,\j) {\aa};
\node[puznode,mygreen] at (\i,\j+1/3) {\bb};
\node[puznode,mygreen] at (\i+1/3,\j+2/3) {\cc};
\global\advance\i by 1
\global\advance\k by 1
\ifnum\k=\size\global\i=0\global\advance\j by 1\global\k=\j
\fi
}
}
\newcommand\honeycomb[2][]{%
\tikzif{
\begin{scope}[puz,math mode]
\begin{scope}[bgline]
\draw (0,0) -- (\size,0) -- (0,\size) -- cycle;
\foreach\x in {1,...,\size} \draw (\x,0) -- (\x,\size-\x);
\foreach\x in {1,...,\size} \draw (0,\x) -- (\size-\x,\x);
\foreach\x in {1,...,\size} \draw (\size-\x,0) -- (0,\size-\x);
\end{scope}
#1
\rawhoneycomb{#2}
\end{scope}
}
}
\newcommand\puzzlex[2][]{%
\tikzif{
\begin{scope}[puz,math mode]
\begin{scope}[bgline]
\draw (0,0) -- (\size,0) -- (0,\size) -- cycle;
\foreach\x in {1,...,\size} \draw (\x,0) -- (\x,\size-\x);
\foreach\x in {1,...,\size} \draw (0,\x) -- (\size-\x,\x);
\foreach\x in {1,...,\size} \draw (\size-\x,0) -- (0,\size-\x);
\end{scope}
#1
\rawpuzzlex{#2}
\end{scope}
}
}
\newdimen{\cellsize}
\newcommand\medboxes{\setlength{\cellsize}{14.22pt}\def\boxformat{}}
\tikzset{tableaubox/.style={draw=black,thin,sharp corners,solid,minimum size=\cellsize,inner sep=0pt}}
\tikzset{tableau/.style={matrix,name=tab,matrix anchor=tab-1-1.south west,inner sep=1pt,matrix of math nodes,cells={anchor=center,draw=black,thin,solid,arrows=-},nodes={tableaubox,execute at begin node=\boxformat},nodes in empty cells,row sep={\cellsize,between origins},column sep={\cellsize,between origins}}}
\newcommand\colorcell[1]{|[fill=#1]|}
\newcommand\missingcell{|[draw=none]|}
\newcommand\cellextra[1]{#1\expandafter\tikz@lib@matrix@start@cell}
\newcommand\vcell{\cellextra{\draw (-0.5*\cellsize,-0.5*\cellsize) --++(0,\cellsize);}\missingcell}
\def\activate#1{\begingroup
  \lccode`\~=`#1%
  \lowercase{\endgroup \let~#1}%
  \catcode`#1=13\relax}
\begin{document}
\maketitle

\begin{abstract}
We propose a new formulation of Hall polynomials in terms of honeycombs,
which were 
previously introduced in the context of the Littlewood--Richardson rule.
We prove a Pieri rule and associativity for our honeycomb formula,
thus showing equality with Hall polynomials. Our proofs are linear algebraic
in nature, extending nontrivially the corresponding bijective
results for ordinary Littlewood--Richardson coefficients \cite{KTW-octa}.
\end{abstract}

\tikzset{bgline/.style={transparent}}
\tikzset{puznode/.style={transparent}}
\section{Introduction}
\subsection{Hall--Littlewood functions and Hall polynomials}
{\em Hall--Littlewood symmetric functions}\/ form a classical basis of the ring of symmetric
functions \cite{Littlewood-poly}; they interpolate between Schur functions at $t=0$ and
monomial symmetric functions at $t=1$. We refer the reader to the book \cite{Macdonald} for details.
In this paper, we are interested in the {\em structure constants}\/
of the algebra of symmetric functions $\Lambda$ (with coefficients which are rational functions
of the formal parameter $t$) 
in the basis of Hall--Littlewood functions:
if $P^\lambda\in \Lambda$ is the Hall--Littlewood symmetric function associated to the partition $\lambda$,
then write
\[
P^\lambda P^\mu = \sum_\nu c^{\lambda,\mu}_\nu P^\nu
\]
where the sum is over all partitions. The $c^{\lambda,\mu}_\nu$ turn out to be polynomials in $t$ with integer
coefficients. They are called {\em Hall polynomials}\/ \cite{Hall-poly} and are interesting
in their own right: they
count short exact sequences of finite abelian $p$-groups (with $t=1/p$).
Hall noticed that they were structure constants of an algebra.
Even though we only study Hall polynomials and not Hall--Littlewood symmetric functions in the present paper,
we note that Hall--Littlewood symmetric functions do occur naturally in the context of solvable lattice models, 
which is the implicit framework of the current work, cf the related papers \cite{artic65,artic67}.

In this paper, we shall restrict to a finite-dimensional quotient $\Lambda_{k,n}$ of $\Lambda$ defined
in two steps. First we consider the usual map $\Lambda\to\Lambda_k$ from symmetric functions to
{\em symmetric polynomials}\/ in $k$ variables. Under it, $P^\lambda$ is mapped to zero unless $\lambda$
has no more than $k$ parts, in which case we obtain the {\em Hall--Littlewood polynomial}
\[
P^\lambda(x_1,\ldots,x_k)=
\sum_{w\in \mathcal{S}_k/\mathcal{S}_\lambda} w\left(x_1^{\lambda_1}\ldots x_k^{\lambda_k}
\prod_{i,j: \lambda_i>\lambda_j} \frac{x_i-t x_j}{x_i-x_j}\right)
\]
where $\mathcal{S}_\lambda$ is the subgroup of $\mathcal{S}_k$ that stabilizes $\lambda$.

In a second stage,  denoting by $\lambda'$ the conjugate partition of $\lambda$,
let $\P_{k,n}$ be the set of partitions $\lambda$ such that $\lambda_1\le n-1$, $\lambda'_1\le k$,
and consider the further quotient by the ideal generated by all $P^\lambda$ for which $\lambda\not\in \P_{k,n}$.
It is easy to see, as a consequence of the Pieri rule that will be discussed below,
that $\Lambda_{k,n}$ has as a basis the $P^\lambda$ for $\lambda\in \P_{k,n}$,
and that its structure constants $c^{\lambda,\mu}_\nu$ are the same as those of $\Lambda$,
with $\lambda,\mu,\nu$ restricted to $\P_{k,n}$.

In what follows we implicitly pad partitions $\lambda\in \P_{k,n}$ with zeroes in such a way that they have exactly $k$ parts.
Denote then by $m_r(\lambda)$, $r=0,\ldots,n-1$, the number of parts $r$ of $\lambda$. In particular,
$m_0(\lambda)=k-\lambda'_1$, and $\sum_{r=0}^{n-1} m_r(\lambda)=k$. The map $\lambda\mapsto m_r(\lambda)$ is a bijection between $\P_{k,n}$ and the set of
$n$-tuplets of nonnegative integers summing to $k$.

\subsection{Honeycombs}
\rem[gray]{careful that compared to [KTW \url{https://arxiv.org/pdf/math/0306274.pdf}], my Young diagrams are conjugate.
both seem to correspond to what I call dual honeycombs in my viewer
(plus conjugate Young diagram for [KTW] of course).
honeycombs would be like [KTW] except probably need to flip left right
or something. in any case HL does not have conjugation symmetry (presumably
we get a rule for q-whitaker if we conjugate).

strangely I seem to agree with [KT \url{https://arxiv.org/pdf/math/9807160.pdf}]

the ``dual numbering'' of the viewer is a bit confusing in the sense that it does give
the conjugate partition.
}
We view vertices of the triangular lattice as 
\begin{equation}\label{eq:defL2}
L_2=\{(a,b,c)\in\ZZ^3: a+b+c=0\}
\end{equation}
(which is naturally identified with the root lattice of $\mathfrak{sl}_3$).
Given $k\ge 1$, a $GL(k)$ {\em honeycomb}\/ \cite{KT-I} is
a subset of line segments with multiplicities sitting inside the triangular lattice, in such a way
that there are $k$ semi-infinite segments going off in each of the three directions
\tikz[baseline=-3pt,scale=0.7]
{
\draw [->] (0,0) -- ++(180:1) node[left] {$\ss(0,1,-1)$};
\draw [->] (0,0) -- node[right] {$\ss(-1,0,1)$} ++(300:1);
\draw [->] (0,0) -- node[right] {$\ss(1,-1,0)$} ++(60:1);
},
while all other segments are finite; and such that the endpoints of segments form vertices
where a balance condition is satisfied, namely that at each vertex, 
if we denote by $j,i,j',i',j'',i''$ the multiplicities of line segments around that vertex, then
the following two equalities are satisfied:
\begin{equation}\label{eq:balance}
\tikz[baseline=-3pt,scale=0.5]{
  \draw (0:1) node[right=-1mm,myred] {$\ss j$} -- (180:1) node[left=-1mm,myred] {$\ss i'$};
  \draw (120:1) node[left=-1mm,myred] {$\ss j'$} -- (300:1) node[right=-1mm,myred] {$\ss i''$};
  \draw (240:1) node[left=-1mm,myred] {$\ss j''$} -- (60:1) node[right=-1mm,myred] {$\ss i$};
}
\qquad
i'-j=i''-j'=i-j''
\end{equation}
(if there is no line the corresponding multiplicity is zero).

Fixing $n\ge1$, we say that a $GL(k)$ honeycomb has boundaries $\lambda,\mu,\nu\in \P_{k,n}$ if all vertices
are contained inside an equilateral triangle of size $n-1$, in such a way
that if one numbers lattice vertices of the boundary of that triangle from $0$ to $n-1$
left to right for each side, then there are exactly $m_r(\lambda)$ (resp.\ $m_r(\mu)$,
$m_r(\nu)$) semi-infinite lines oriented 
\tikz[baseline=-3pt,scale=0.7]
{
\draw [->] (0,0) -- ++(180:1);
}
(resp.\ 
\tikz[baseline=-3pt,scale=0.7]
{
\draw [->] (0,0) -- ++(60:1);
},
\tikz[baseline=-3pt,scale=0.7]
{
\draw [->] (0,0) -- ++(300:1);
})
going through vertex $r$, $r=0,\ldots,n-1$ of the North-West (resp.\ North-East, South) side of the triangle.

\begin{ex*}
A honeycomb with $\lambda=(6,3,1,0)$, $\mu=(3,2,1,0)$, $\nu=(6,5,4,1)$, $k=4$, $n=7$:
\begin{center}
\def\size{7}\honeycomb[{
\node at (-2/3,1+1/3) {6};
\node at (-2/3,4+1/3) {3};
\node at (-2/3,6+1/3) {1};
\node at (-2/3,7+1/3) {0};
\node at (1/3,-2/3) {0};
\node at (1+1/3,-2/3) {1};
\node at (2+1/3,-2/3) {2};
\node at (3+1/3,-2/3) {3};
\node at (2+1/3,5+1/3) {1};
\node at (5+1/3,2+1/3) {4};
\node at (6+1/3,1+1/3) {5};
\node at (7+1/3,1/3) {6};
}]{1/0/1/0/1/0,1/0/0/1/1/1,1/0/0/1/1/1,1/0/0/1/1/1,0/0/0/1/1/0,0/0/0/1/1/0,0/0/0/1/1/0,0/0/0/0/0/0,1/0/0/0/0/1,1/0/1/0/1/0,1/0/0/1/1/1,0/0/0/1/1/0,0/0/0/1/1/0,0/0/0/0/0/0,1/1/1/0/0/1,0/0/0/0/0/0,1/0/1/0/1/0,0/0/0/1/1/0,0/1/1/0/0/0,1/0/0/0/0/1,0/1/1/0/0/0,0/0/0/0/0/0,0/0/0/0/0/0,1/1/1/0/0/1,0/0/0/0/0/0,0/1/1/0/0/0,1/0/1/0/1/0,0/1/1/0/0/0}
\end{center}
In general, lines can have multiplicities which will be drawn as lines stacked next to each other and
redundantly labelled in purple.
\end{ex*}

To each vertex of a honeycomb we associate a {\em fugacity}\/ given by the formula
\begin{align}\label{eq:fug}
\fug\Big(\tikz[baseline=-3pt,scale=0.5]{
  \draw (0:1) node[right=-1mm,myred] {$\ss j$} -- (180:1) node[left=-1mm,myred] {$\ss i'$};
  \draw (120:1) node[left=-1mm,myred] {$\ss j'$} -- (300:1) node[right=-1mm,myred] {$\ss i''$};
  \draw (240:1) node[left=-1mm,myred] {$\ss j''$} -- (60:1) node[right=-1mm,myred] {$\ss i$};
}\Big)
&=
  \sum_{r=0}^{\min(i,i')} (-1)^r t^{j'r+r(r+1)/2} \frac{\a_{i+j-r}}{\a_{i-r}\a_r \a_{i'-r}}
\\\notag
&=
\frac{\a_{i+j}}{\a_i\a_{i'}}\,
\,{}_2\phi_1\left({t^{-i},t^{-i'}\atop t^{-(i+j)}};t,t^{i''+1}\right) 
\end{align}
where $\a_i=\prod_{r=1}^{i} (1-t^r)$,
and 
$\,{}_2\phi_1\left({t^{-i},t^{-i'}\atop t^{-(i+j)}};t,t^{i''+1}\right)=
\sum_{n=0}^{\min(i,i')} \frac{(t^{-i};t)_n(t^{-i'})_n}{(t;t)_n(t^{-(i+j)};t)_n}t^{n(i''+1)}$
is a terminating basic hypergeometric series \cite{GR-basic}. Fugacities are actually polynomials in $t$,
see Appendix~\ref{app:fug} for its first few values.

The fugacity of a honeycomb is the product of fugacities of its vertices.

The main theorem of this paper is a new formulation of the product rule for Hall--Littlewood polynomials
in terms of honeycombs:
\begin{thm}\label{thm:main}
The structure constants $c^{\lambda,\mu}_\nu$ of $\Lambda_{k,n}$ are given by
\[
c^{\lambda,\mu}_\nu = \sum \big\{ \fug(H)\, :\, H\ \text{honeycomb with boundaries $\lambda$, $\mu$, $\nu$}) \big\}
\]
\end{thm}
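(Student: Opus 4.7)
The plan is to introduce, provisionally, a family of numbers $\tilde c^{\lambda,\mu}_\nu := \sum_H \fug(H)$ summed over $GL(k)$ honeycombs of boundary $(\lambda,\mu,\nu)\in\P_{k,n}^3$, and to show $\tilde c^{\lambda,\mu}_\nu = c^{\lambda,\mu}_\nu$ by establishing two structural properties that together characterise the Hall-Littlewood product on $\Lambda_{k,n}$: a \emph{Pieri rule} matching the classical Pieri formula of Hall-Littlewood theory, and \emph{associativity} of the product defined by the $\tilde c$'s. Since the algebra $\Lambda_{k,n}$ is generated by a small family of distinguished $P^\lambda$ (e.g.\ single-row partitions $(r)$, or single-column partitions $(1^r)$), any two associative products agreeing on these generators must coincide, which yields the theorem.

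For the Pieri step, I would fix $\mu=(r)$ (or $\mu$ of some other elementary shape) and classify the honeycombs whose $\mu$-side carries only a single semi-infinite strand. The balance condition \eqref{eq:balance} then severely constrains the internal topology: essentially only a ``staircase'' of vertices contributes, and one can evaluate the fugacity \eqref{eq:fug} on each of these vertex types using the fact that the basic hypergeometric series in \eqref{eq:fug} degenerates when $i$ or $i'$ is small. Summing the resulting products of $\a_i$'s should reproduce exactly the coefficients appearing in the standard Hall-Littlewood Pieri formula (see \cite{Macdonald}, III.5).

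Associativity amounts to the identity
\[
\sum_\sigma \tilde c^{\lambda,\mu}_\sigma\, \tilde c^{\sigma,\nu}_\pi \;=\; \sum_\sigma \tilde c^{\mu,\nu}_\sigma\, \tilde c^{\lambda,\sigma}_\pi
\]
for all $\lambda,\mu,\nu,\pi\in\P_{k,n}$. Combinatorially, both sides enumerate weighted pairs of honeycombs that glue along a common $\sigma$-boundary to fill a rhombus-shaped region with prescribed outer boundary $(\lambda,\mu,\nu,\pi)$; the two sides correspond to two inequivalent ways of partitioning that region along an internal line. I would follow the overall architecture of the octahedron-recurrence proof of Knutson-Tao-Woodward \cite{KTW-octa} for ordinary Littlewood-Richardson coefficients, but replacing the bijective matching of honeycomb pairs by a weighted local identity: for each ``elementary move'' that converts one diagonal decomposition of a small rhombic patch into the other, one proves a fugacity-preserving linear-algebraic relation among the weights \eqref{eq:fug}. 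Iterating such local moves over the whole region telescopes into the global associativity relation.

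The main obstacle is precisely this last step. At $t=0$ every fugacity reduces to a $0/1$ indicator and the octahedron move is a genuine bijection; for general $t$ the fugacities are nontrivial polynomials and the requisite local move must be an algebraic identity among terminating ${}_2\phi_1$ series — a Yang-Baxter-like relation for the vertex weights of \eqref{eq:fug}. Formulating the correct local identity, verifying it (perhaps by reduction to a $q$-Saalsch\"utz or contiguous-relation argument), and controlling how these local pieces are assembled — including handling degenerate patches where edges have multiplicity zero — is where the proof departs genuinely from the bijective world and becomes linear-algebraic in the sense indicated in the abstract.
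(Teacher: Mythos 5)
Your overall strategy --- define the product via honeycomb fugacities, verify the Hall--Littlewood Pieri rule for it, prove associativity, and conclude by generation --- is exactly the architecture of the paper (Propositions~\ref{prop:pieri} and \ref{prop:assoc}, followed by induction on products of the $P^{(r)}$). Your Pieri sketch is essentially what the paper does: for $\mu=(r)$ the balance condition forces a single staircase path, and the fugacities \eqref{eq:fug} degenerate to $(1-t^{m})/(1-t)$ at right turns and $1-t$ at left turns, reproducing \eqref{eq:pieri} once one accounts for the normalisation $q_r=(1-t)P^{(r)}$ used in \cite{Macdonald}. That part is fine.

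The genuine gap is that the associativity step, which you correctly identify as the heart of the matter, is not actually carried out --- and it is by far the bulk of the work, not a single ``Yang--Baxter-like relation'' among the weights \eqref{eq:fug}. Concretely, the paper must: (i) lift the whole picture from the $\mathfrak{sl}_3$ to the $\mathfrak{sl}_4$ root lattice, introducing new edge types $V_{\alpha\,\alpha+1}$ and vertex weights \eqref{eq:defDD}--\eqref{eq:defUU} built from \emph{products of two} ${}_2\phi_1$ factors, so that gluing two puzzles along a common boundary $\sigma$ becomes a tensor contraction; (ii) handle the fact that the naive gluing produces $\sum_\sigma c^{\lambda,\mu}_\sigma h_\sigma^{-1}c^{\nu,\rho^*}_{\sigma^*}$ rather than $\sum_\sigma c^{\lambda,\mu}_\sigma c^{\sigma,\nu}_\rho$, which requires the full dihedral symmetry of the vertex weight (Lemma~\ref{lem:D6}, proved via Heine's transformation and a ${}_3\phi_1$ representation) together with Proposition~\ref{prop:Z3}; and (iii) verify three local excavation identities, of which the tetrahedron identity (Proposition~\ref{prop:tetra}) is not termwise: an internal label is genuinely summed over, no fugacity-preserving bijection exists (Appendix~\ref{app:assoc} exhibits a counterexample), and the proof proceeds by deriving a $D_4$-orbit of contiguous-relation recurrences \eqref{eq:recur} that reduce the sum to a boundary case evaluable in closed form. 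Your proposal points in the right direction (``contiguous-relation argument''), but none of this is formulated, let alone proved; as written, your argument establishes only the reduction of Theorem~\ref{thm:main} to Propositions~\ref{prop:pieri} and \ref{prop:assoc}, not the theorem itself.
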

Equivalently, one has, in $\Lambda_{k,n}$,
\[
P^\lambda P^\mu = \sum_{H:\ \substack{\text{NW boundary of }H=\lambda\\\text{NE boundary of }H=\mu}} \fug(H)\ P^{\text{S boundary of $H$}}
\]
where the sum is over all honeycombs $H$ whose vertices are contained inside an equilateral triangle of 
size $n-1$.

\begin{ex*}
Here are the three honeycombs with boundaries $\lambda=(3,2,1,1,0)$, $\mu=(3,1,0,0,0)$, $\nu=(4,3,2,1,1)$,
$k=5$, $n=5$: 
\begin{center}
\def\size{5}
\honeycomb[{
  \node[myred] at (1/3,2.5+1/3) {2};
}]{3/0/0/0/0/3,1/0/1/0/1/0,0/0/0/1/1/0,1/0/0/1/1/1,0/0/0/1/1/0,3/1/1/0/0/3,0/0/0/0/0/0,0/0/0/0/0/0,1/0/1/0/1/0,3/0/1/0/1/2,0/0/0/1/1/0,0/1/1/1/1/0,2/0/2/0/2/0,0/1/1/2/2/0,0/1/1/0/0/0}
\honeycomb[{
  \node[myred] at (1/3,1.5+1/3) {2};
  \node[myred] at (1/3,2.5+1/3) {2};
  \node[myred] at (1.5+1/3,1+1/3) {2};
\path (1+1/3,1+1/3) node[right,green!50!black] {\ss 1+t};
\path (2+1/3,1+1/3) node [left,green!50!black] {\ss 1-t};
}]{3/0/0/0/0/3,1/0/0/0/0/1,0/0/0/0/0/0,1/0/1/0/1/0,0/0/0/1/1/0,3/0/1/0/1/2,1/0/1/1/2/0,0/1/0/2/1/1,0/0/0/1/1/0,2/1/1/0/0/2,0/0/0/0/0/0,1/0/1/0/1/0,2/0/2/0/2/0,0/1/1/2/2/0,0/1/1/0/0/0}
\honeycomb[{
  \node[myred] at (1/3,1.5+1/3) {2};
\path (1+1/3,2+1/3) node[above=-1mm,green!50!black] {\ss 1+t-t^2};
\path (1+1/3,3+1/3) node [right,green!50!black] {\ss 1+t};
}]{3/0/0/0/0/3,1/0/0/0/0/1,0/0/0/0/0/0,1/0/1/0/1/0,0/0/0/1/1/0,3/0/1/0/1/2,1/0/0/1/1/1,0/1/1/1/1/0,0/0/0/1/1/0,2/0/1/0/1/1,1/1/1/1/1/1,0/0/0/1/1/0,1/1/2/0/1/0,1/0/1/1/2/0,0/1/1/0/0/0}
\end{center}
Nontrivial fugacities are marked in green next to the vertices. In total, we find
\[
c^{\lambda,\mu}_\nu=
1+(1+t)(1-t)+(1+t-t^2)(1+t)=3+2t-t^2-t^3
\]
\end{ex*}

Product rules for Hall--Littlewood functions, or equivalently expressions for Hall polynomials, 
already exist in the literature; most notably, in \cite[II.4]{Macdonald}, a formula for $c^{\lambda,\mu}_\nu$
is given as a sum over Littlewood--Richardson tableaux, where the coefficients are very similar to ours.
Since honeycombs and Littlewood--Richardson tableaux are in bijection, one can presumably relate our two rules.
However, we believe the honeycomb formulation displays better some of the underlying symmetries, as should be made
clear in what follows.

\begin{rmk*}One can check that the expression \eqref{eq:fug} evaluated at $t=0$ is equal to $1$, so that 
the fugacity of every honeycomb at $t=0$ is $1$ as well; in other words,
\[
c^{\lambda,\mu}_\nu (t=0)= \text{number of honeycombs with boundaries $\lambda$, $\mu$, $\nu$}
\]
In this way we recover one of the many formulations of the {\em Littlewood--Richardson rule}\/ for Schur
polynomials \cite{KT-I}.
\end{rmk*}

\subsection{Plan of proof}
The rest of the paper is devoted to the proof of Theorem~\ref{thm:main}. The logic that we follow 
is analogous to
the paper \cite{KTW-octa}, which is concerned with the Littlewood--Richardson rule for Schur polynomials,
the special case $t=0$ of our result (as remarked right above).\footnote{There are minor differences between our setup
and that of \cite{KTW-octa}: (a) We work in $\Lambda_{k,n}$ whereas they work in $\Lambda_k$; (b) relatedly,
we use honeycombs whereas they use hives (the bijection between the two is explained in \cite[Sect.~6]{KTW-octa};
(c) our partitions are {\em conjugate}\/ of theirs, which is irrelevant at $t=0$, but not so for general $t$.}

We identify partitions with Young diagrams.
It is known that Hall--Littlewood polynomials (or symmetric functions) satisfy the {\em Pieri rule}\/
\cite[(5.7--5.8) p228]{Macdonald}: given $r\in\ZZ_{>0}$,
\begin{equation}\label{eq:pieri}
P^\lambda P^{(r)} = \sum_{\substack{\nu:\ \nu/\lambda\text{ is a}\\\text{horizontal strip}\\\text{with $r$ boxes}}}
P^\nu\,
\frac{1}{1-t}
\prod_{i\in I_{\nu/\lambda}} (1-t^{m_i(\nu)})
\end{equation}
where $I_{\nu/\lambda}$ is the set of $i$ such that $\nu/\lambda$ has a box in column $i$ but not in column $i+1$.

As an aside, we note that
since the $P^{(r)}$, $r\in\ZZ_{>0}$, generate $\Lambda_k$ as an algebra \cite[p 209]{Macdonald},
and since in \eqref{eq:pieri},
$\nu_1\ge \lambda_1$ and $\nu'_1\ge \lambda'_1$,
the {\em ideal}\/ of $\Lambda_k$ generated by the $P^\lambda$, $\lambda\not\in \P_{k,n}$, is equal
to the linear span of these $P^\lambda$.
This implies, as noted in the introduction, that the $P^\lambda$, $\lambda\in \P_{k,n}$, form a basis of $\Lambda_{k,n}$, and that the structure constants of $\Lambda_{k,n}$ are the same as those of $\Lambda$.

Now {\em define}\/ a bilinear operation 
$\times$ on the linear span $\Lambda_{k,n}$ of the $P^\lambda$, $\lambda\in \P_{k,n}$, by
\[
P^\lambda\times P^\mu = \sum_{H:\ \substack{\text{NW boundary of }H=\lambda\\\text{NE boundary of }H=\mu}} \fug(H)\ P^{\text{S boundary of $H$}}
\]
Theorem~\ref{thm:main} says that $\times$ agrees with the usual product of $\Lambda_{k,n}$.
We shall first show that the Pieri rule \eqref{eq:pieri} 
is satisfied by $\times$, i.e.,
\begin{prop}\label{prop:pieri}
One has, for any $r\in\ZZ_{>0}$,
\[
P^\lambda\times P^{(r)}=P^\lambda P^{(r)}
\]
as a relation in $\Lambda_{k,n}$.
\end{prop}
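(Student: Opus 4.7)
The approach is to describe all honeycombs with NW boundary $\lambda$ and NE boundary $\mu=(r)$ explicitly, then match fugacities term by term with the Pieri coefficient in \eqref{eq:pieri}.

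First, I would analyze the NE input. Padded to $k$ parts, $(r)$ has $m_0=k-1$ and $m_r=1$, so exactly one unit-multiplicity line enters from the NE side at position $r$; all other NE input is concentrated at the top corner. Propagating the balance condition \eqref{eq:balance} inward from the boundary, I would show that the nontrivial vertices of $H$ are confined to a single connected ``worldline'' traced by this active NE input as it interacts with the west-going $\lambda$-lines. At every vertex off this worldline, one of $i,i'$ vanishes and the fugacity \eqref{eq:fug} collapses to $1$, so such vertices contribute nothing.

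Second, I would classify the honeycombs by their S boundary. The worldline enters at NE position $r$ and exits on the S side after ``bumping'' a subset of the westward $\lambda$-lines. A careful geometric analysis of the balance condition along the worldline should show: (i)~the resulting S boundary $\nu$ is always a horizontal strip $\nu/\lambda$ of size $r$ (automatically in $\P_{k,n}$ since $H$ sits in the triangle of size $n-1$); (ii)~conversely, each such $\nu$ is attained by a unique honeycomb $H_\nu$. This mirrors the corresponding bijection at $t=0$ \cite{KTW-octa} but with the $t$-deformed vertex rules forcing the same combinatorics.

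Third, I would evaluate $\fug(H_\nu)$ by grouping the worldline vertices according to the columns of $\nu/\lambda$. I expect that each maximal run of worldline vertices lying in a single column $i$ collapses, via the ${}_2\phi_1$ identity underlying \eqref{eq:fug}, to a factor $1-t^{m_i(\nu)}$ when $i\in I_{\nu/\lambda}$, and to $1$ otherwise; a single distinguished ``exit'' vertex on the S boundary should produce the prefactor $\frac{1}{1-t}$. The product then matches \eqref{eq:pieri} exactly, proving the proposition.

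The main obstacle is the third step. Steps~1 and 2 are essentially combinatorial propagations of the balance condition and should reduce to routine case analysis, but the fugacities in \eqref{eq:fug} are nontrivial basic hypergeometric sums, so collapsing their product along the worldline to the simple form $\frac{1}{1-t}\prod_{i\in I_{\nu/\lambda}}(1-t^{m_i(\nu)})$ requires a genuine $q$-series identity (or a telescoping reorganization of the worldline contributions column by column). Carrying this out carefully, especially tracking the precise indices $(i,j,i',j',i'',j'')$ at each worldline vertex and verifying that the constraint on column multiplicities $m_i(\nu)$ emerges correctly, is where the real work lies.
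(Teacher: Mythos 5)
Your outline is the same route the paper takes: there is exactly one honeycomb with boundaries $\lambda$, $(r)$, $\nu$ for each horizontal strip $\nu/\lambda$ with $r$ boxes (the paper constructs it explicitly from the decomposition of the strip into maximal column-runs, and leaves uniqueness as an exercise via \cite[Prop.~4]{KTW-octa}), and the fugacity is then evaluated along the single multiplicity-one path threading from NE position $r$ to the leftmost column. So steps 1 and 2 match the paper, modulo the fact that your justification ``one of $i,i'$ vanishes'' for the off-path vertices is not quite right: there are off-path crossing vertices with $i,i'>0$ (e.g.\ those of the first NE/SW column), and their fugacity equals $1$ by a one-line $q$-binomial evaluation of \eqref{eq:fug} with $j=j'=0$, not by degeneration of the sum to a single term.

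Where your picture genuinely diverges is step 3, and the obstacle you flag there does not exist. Since the active path has multiplicity one, every vertex on it has $\min(i,i')\le 1$, so each ${}_2\phi_1$ in \eqref{eq:fug} truncates to at most two terms; no $q$-series identity is needed. The only vertices with fugacity $\ne 1$ are the \emph{turns} of the path: a right turn (SW-bound to W-bound) occurs at a single vertex in each column $c_i\in I_{\nu/\lambda}$ and contributes the two-term sum $\frac{1-t^{m_{c_i}(\nu)}}{1-t}$, while each left turn (W-bound to SW-bound) contributes $1-t$. Thus the factor $1-t^{m_i(\nu)}$ does not arise from collapsing a ``maximal run'' of vertices in column $i$, and the prefactor $\frac{1}{1-t}$ is not produced by a distinguished exit vertex: with $\ell$ runs there are $\ell$ right turns but only $\ell-1$ left turns, giving
\[
(1-t)^{\ell-1}\prod_{i=1}^{\ell}\frac{1-t^{m_{c_i}(\nu)}}{1-t}=\frac{1}{1-t}\prod_{i\in I_{\nu/\lambda}}\bigl(1-t^{m_i(\nu)}\bigr),
\]
which is \eqref{eq:pieri} once one accounts for Macdonald's normalization $q_r=(1-t)P^{(r)}$. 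With these corrections your plan is exactly the paper's proof, and the step you expected to be the hard one is the easy one.
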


Secondly we show
\begin{prop}\label{prop:assoc}
$\times$ is associative.
\end{prop}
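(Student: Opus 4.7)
Associativity $(P^\lambda\times P^\mu)\times P^\nu = P^\lambda\times (P^\mu\times P^\nu)$ translates, after expansion using the definition of $\times$, to the numerical identity
\[
\sum_{\sigma\in\P_{k,n}} c^{\lambda,\mu}_\sigma\, c^{\sigma,\nu}_\pi \;=\; \sum_{\tau\in\P_{k,n}} c^{\mu,\nu}_\tau\, c^{\lambda,\tau}_\pi
\qquad \text{for all } \lambda,\mu,\nu,\pi\in\P_{k,n},
\]
where each $c^{\alpha,\beta}_\gamma$ is the honeycomb sum of Theorem~\ref{thm:main}. Each summand is a product of two honeycomb fugacities, which I would interpret geometrically as the fugacity of a pair of honeycombs glued along a shared side carrying an intermediate partition. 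My plan is to realise both sides of the identity as two parameterisations of a single ``rhombus honeycomb'' partition function.

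Concretely, I would introduce the configuration space of honeycombs on the rhombus obtained by glueing two size-$(n-1)$ equilateral triangles along a common edge, with prescribed outer boundaries $\lambda,\mu,\nu,\pi$ on the four outer sides and no prescribed boundary along the shared diagonal. Cutting the rhombus along one diagonal gives the LHS sum (with intermediate partition $\sigma$), while cutting along the other diagonal gives the RHS sum (with intermediate $\tau$). The substantive part of the argument is then a local ``flip'' identity that transports the cut from one diagonal to the other, vertex-by-vertex, while preserving the weighted sum of fugacities. At $t=0$ this flip is a pure bijection (the octahedron recurrence of \cite{KTW-octa}); for general $t$ it becomes a weighted identity in which the internal summation index $r$ of the fugacity \eqref{eq:fug} is rearranged across the flipped configuration. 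Iterating the flip along the diagonal moves the entire cut, proving the identity.

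The main obstacle is the local flip identity itself. Unlike in the $t=0$ case, it is no longer a bijection between local configurations: instead, the product of fugacities on one side of the flip, summed over a small range of auxiliary multiplicities (constrained by the balance condition \eqref{eq:balance}), must match the analogous product on the other side. This is a nontrivial identity among terminating $_2\phi_1$ series, likely derivable from a combination of standard transformations (Heine's transformation, the $q$-Saalsch\"utz summation, or contiguous relations) of basic hypergeometric series \cite{GR-basic}. Matching the combinatorics of the honeycomb flip to such a $q$-series identity --- keeping track of all six multiplicities at each vertex involved, together with the compatibility constraints imposed by neighbouring vertices --- is where the bulk of the technical work lies. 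Once this local identity is established, the global statement of associativity follows by induction on the ``distance'' of the cut from its initial position, with no boundary terms appearing because the flips are confined to the interior of the rhombus.
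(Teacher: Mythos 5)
Your global strategy is the same as the paper's: reduce associativity to the quadratic identity $\sum_\sigma c^{\lambda,\mu}_\sigma c^{\sigma,\nu}_\pi=\sum_\tau c^{\lambda,\tau}_\pi c^{\mu,\nu}_\tau$, realise both sides as two slicings of a single glued object, and pass from one slicing to the other by local moves that generalise the octahedron recurrence of \cite{KTW-octa}, accepting that for $t\neq 0$ these moves are weighted identities rather than bijections. That much is right, and your observation that no bijective proof can exist away from $t=0$ matches the paper's Appendix~B example.

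However, the proposal has a genuine gap, and it sits exactly where you locate ``the bulk of the technical work'': the local identities are neither stated nor proved, and as described your scheme cannot even be set up. First, once the cut starts moving off a diagonal, the intermediate configurations are no longer pairs of honeycombs glued along a partition-labelled edge; the cut passes through the interior and the edges it crosses need a strictly larger label set. The paper handles this by upgrading the whole tensor calculus from $\mathfrak{sl}_3$ to $\mathfrak{sl}_4$, introducing spaces $V_A$ for $A\subset\{0,1,2,3\}$ with $|A|=2$ carrying four labels each; without some such device the ``induction on the distance of the cut'' has no well-defined intermediate objects. Second, a single flip does not suffice: the subdivision of the excavated tetrahedron produces three distinct local moves (tetrahedron, dual tetrahedron, octahedron), and --- contrary to the $t=0$ intuition you import from \cite{KTW-octa} --- the genuinely non-bijective one here is the \emph{tetrahedron} move, not the octahedron one (in the paper the octahedron and dual tetrahedron identities have all internal labels fixed by the external ones). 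Third, your hope that the key identity follows from standard ${}_2\phi_1$ transformations is overly optimistic: the paper already notes that even the dihedral symmetry of the vertex weight does not follow from the standard transformation formulae, and the tetrahedron identity is proved instead by deriving a $D_4$-orbit of contiguous (recurrence) relations for the summed expression and reducing to a degenerate base case where eight of the twelve external labels vanish. So while your roadmap points in the right direction, the proof as proposed is missing its central ingredient and mis-identifies both the shape of the intermediate data and the source of the required $q$-series identity.
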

In other words, the coefficients
$\sum \big\{ \fug(H)\, :\, H\ \text{honeycomb with boundaries $\lambda$, $\mu$, $\nu$}) \big\}$
are the structure constants of an associative algebra.

Theorem~\ref{thm:main} then follows from Propositions~\ref{prop:pieri} and \ref{prop:assoc}, as we recall briefly.
The $P^{(r)}$, $r\in\ZZ_{>0}$, generate $\Lambda_{k,n}$ as an algebra,
and the $P^\lambda$, $\lambda\in \P_{k,n}$ are a linear basis of it,
so we can content ourselves with showing
\[
P^\lambda\times (P^{(r_1)}\ldots P^{(r_\ell)})=
P^\lambda P^{(r_1)}\ldots P^{(r_\ell)}
\]
We prove this by induction on $\ell$:
\begin{align*}
P^\lambda\times (P^{(r_1)}\ldots P^{(r_\ell)})&=
P^\lambda\times (P^{(r_1)}\ldots P^{(r_{\ell-1})} P^{(r_\ell)})
\\
&=
P^\lambda\times (P^{(r_1)}\ldots P^{(r_{\ell-1})}\times P^{(r_\ell)})&\text{by Prop.~\ref{prop:pieri}}&
\\
&=
(P^\lambda\times P^{(r_1)}\ldots P^{(r_{\ell-1})})\times P^{(r_\ell)}&\text{by Prop.\ \ref{prop:assoc}}&
\\
&=
(P^\lambda P^{(r_1)}\ldots P^{(r_{\ell-1})})\times P^{(r_\ell)}&\text{by induction}&
\\
&=
P^\lambda P^{(r_1)}\ldots P^{(r_{\ell-1})}P^{(r_\ell)}&\text{by Prop.~\ref{prop:pieri}}&
\end{align*}

Section~\ref{sec:pieri} is devoted to the proof of Proposition~\ref{prop:pieri}.
Section~\ref{sec:bosonic}, though not strictly necessary for the proof, 
introduces the formalism of ``tensor calculus''  which is used in related work on puzzles
(see in particular \cite{artic46,artic71}); the same type of graphical calculus is then used in 
Section~\ref{sec:assoc}, which contains the proof of Proposition~\ref{prop:assoc}. 
In fact, it is perhaps in the latter proof that the interest of the paper lies, 
rather than in the result of Theorem~\ref{thm:main} itself.
Indeed we prove associativity by reducing it to elementary ``excavation'' moves of a tetrahedron, in the same spirit
of three-dimensional geometry as \cite{KTW-octa}; 
however, a major difference is that while the method of \cite{KTW-octa} is {\em combinatorial},
resulting in a bijective proof of associativity, our method is {\em linear algebraic}\/ (in fact, secretly
representation-theoretic), expressing the whole of $c^{\lambda,\mu}_\nu$ as a certain entry of a tensor
and then manipulating it using linear algebra identities, rather than manipulating individual honeycombs. (In fact,
we show in Appendix~\ref{app:assoc}
an example for which no fugacity-preserving bijection between pairs of honeycombs exists, barring a
combinatorial proof of associativity away from $t=0$.)
\rem{refer to Hannah?}

\section{Pieri rule}\label{sec:pieri}
\rem[gray]{technically we don't need to prove that we get the right diagrams --
it's true because of the $t=0$ case.}

Given a partition $\lambda\in \P_{k,n}$ viewed as a Young diagram, and $\nu\in \P_{k,n}$ obtained from $\lambda$ by addition
of a horizontal strip with $r$ boxes ($1\le r\le k$), subdivide the strip $\nu/\lambda$
into subsets of boxes in consecutive columns; indexing them $1,\ldots,\ell$
from top to bottom,
denote by $c_i$ the column of the rightmost box of the $i^{\rm th}$ subset,
and by $b_i$ its number of boxes.

\begin{ex*}
\[
\lambda=(5,3,1,1,0),\ r=3,\ \nu=(5,5,2,1,0),\qquad
\tikz[baseline=0]{\node[tableau]{&&&&\\&&&\colorcell{blue!50!white}&\colorcell{blue!50!white}\\&\colorcell{green!50!white}\\\\\vcell\\};
\path (tab-2-5) node[right=2mm] {$b_1=2$};
\draw[dotted] (tab-2-5) ++ (0,-0.4) -- ++(0,-1.5) node[below] {$c_1=5$}; 
\path (tab-3-2) node[right=2mm] {$b_2=1$};
\draw[dotted] (tab-3-2) ++ (0,-0.4) -- ++(0,-1) node[below] {$c_2=2$}; 
}
\]
\[
\lambda=(3,2,1,1,0),\ r=3,\ \nu=(4,3,1,1,1),\qquad
\tikz[baseline=0]{\node[tableau]{&&&\colorcell{blue!50!white}\\&&\colorcell{blue!50!white}\\\\\\\colorcell{green!50!white}\\};
\path (tab-1-4) node[right=2mm] {$b_1=2$};
\draw[dotted] (tab-1-4) ++ (0,-0.4) -- ++(0,-2.5) node[below] {$c_1=4$}; 
\path (tab-5-1) node[right=2mm] {$b_2=1$};
\draw[dotted] (tab-5-1) ++ (0,-0.4) -- ++(0,-0.5) node[below] {$c_2=1$}; 
}
\]
\end{ex*}

Construct a honeycomb with boundaries $\lambda$, $\mu=(r)$, $\nu$ as follows.
Consider the path starting on the right side of the honeycomb
at location $r$, and successively make it go 
South-West until it reaches the NW/SE column
numbered $c_1$, then make it go $b_1$ steps straight West, then again
South-West till it reaches NW/SE column $c_2$, etc, and finally
$b_\ell$ steps to the left, at which point it reaches the $0^{\rm th}$
(leftmost) NE/SW column.

Note that the rest of the honeycomb is then uniquely determined by the balance condition;\footnote{
We leave the proof of uniqueness as an exercise to the reader; note that it can in principle
be extracted from the corresponding proof of \cite[Prop.~4]{KTW-octa} by applying the honeycomb--hive bijection.}
in particular, in the first NE/SW column, vertices are of the form
\tikz[baseline=-3pt,scale=0.5]{
  \draw (0,0) -- (180:1) node[left=-1mm,myred] {$\ss i'$};
  \draw  (0,0) -- (300:1) node[right=-1mm,myred] {$\ss i'$};
  \draw (240:1) node[left=-1mm,myred] {$\ss i-i'$} -- (60:1) node[right=-1mm,myred] {$\ss i$};
},
except at the spot where the special path described above arrives from the right, where we have
\tikz[baseline=-3pt,scale=0.5]{
  \draw (0:1) -- (180:1) node[left=-1mm,myred] {$\ss i'$};
  \draw  (0,0) -- (300:1) node[right=-1mm,myred] {$\ss i'-1$};
  \draw (240:1) node[left=-1mm,myred] {$\ss i-i'+1$} -- (60:1) node[right=-1mm,myred] {$\ss i$};
}.

\begin{ex*}With the same partitions as above, and $n=6$, we obtain
\begin{center}
\def\size{6}\honeycomb[{
  \node[myred] at (1/3,1+1/3) {3};
  \node[myred] at (1/3,3+1/3) {2};
  \path (1+1/3,2+1/3) node[left,green!50!black] {\ss 1-t};
  \path (3+1/3,0+1/3) node[below left,green!50!black] {\ss 1+t};
\node at (6+1/3,1/3) {c_1};
\node at (3+1/3,3+1/3) {c_2};
\draw[latex-latex] (3+1/3-0.2,1/3) -- node[above=-1mm] {b_1} (1+1/3-0.1,2+1/3-0.1);
\draw[latex-latex] (1+1/3-0.1,3+1/3-0.1) -- node[above=-1mm] {b_2} (1/3,4+1/3-0.2);
}]{4/0/1/0/1/3,0/0/0/1/1/0,0/0/0/1/1/0,1/0/1/1/2/0,0/0/0/2/2/0,0/0/0/2/2/0,3/0/0/0/0/3,0/0/0/0/0/0,0/1/1/0/0/0,0/0/0/0/0/0,0/0/0/0/0/0,3/0/1/0/1/2,0/1/0/1/0/1,0/0/0/0/0/0,0/0/0/0/0/0,2/0/0/0/0/2,1/0/1/0/1/0,0/0/0/1/1/0,2/1/2/0/1/1,0/0/0/1/1/0,1/0/1/0/1/0}
\qquad
\honeycomb[{
  \node[myred] at (1/3,2.5+1/3) {3};
  \node[myred] at (1/3,3.5+1/3) {2};
  \node[myred] at (0.5+1/3,4+1/3) {2};
  \path (1+1/3,3+1/3) node[left,green!50!black] {\ss 1-t};
  \path (1+1/3,4+1/3) node[right,yshift=1mm,green!50!black] {\ss 1+t+t^2};
\draw[latex-latex] (3+1/3-0.1,1+1/3-0.1) -- node[above=-1mm] {\ b_1} (1+1/3-0.1,3+1/3-0.1);
\draw[latex-latex] (1+1/3-0.2,4+1/3) -- node[above=-1mm] {b_2} (1/3-0.1,5+1/3-0.1);
\node at (5.33,1.33) {c_1};
\node at (2.33,4.33) {c_2};
}]{4/0/0/0/0/4,0/0/0/0/0/0,0/0/0/0/0/0,1/0/0/0/0/1,0/0/0/0/0/0,0/0/0/0/0/0,4/0/0/0/0/4,0/0/0/0/0/0,0/0/0/0/0/0,1/0/1/0/1/0,0/0/0/1/1/0,4/0/1/0/1/3,0/0/0/1/1/0,0/1/1/1/1/0,0/0/0/1/1/0,3/0/1/0/1/2,0/1/0/1/0/1,0/0/0/0/0/0,2/0/2/0/2/0,1/0/1/2/3/0,0/1/1/0/0/0}
\end{center}
\end{ex*}

The fugacity of the honeycomb is entirely concentrated along the special path; 
each right turn of the path at NW/SE column
$c_i$, of the form
\def\size{1}\tikzset{puznode/.style={font=\scriptsize}}%
\tikz[baseline=-3pt,scale=0.5]{
  \draw (0,0) -- (180:1);
  \draw (120:1) node[left=-1mm,myred] {$\ss m-1$} -- (300:1) node[right=-1mm,myred] {$\ss m$};
  \draw (0,0) -- (60:1);
}
with $m=m_{c_i}(\nu)$,
incurs according to \eqref{eq:fug} a fugacity of 
\[
\sum_{r=0}^1
(-1)^r
t^{(m-1)r+r(r+1)/2}\frac{1}{\a_r \a_{1-r}}
=
\frac{1-t^m}{1-t}
\] 
whereas each left turn, of the form
\tikz[baseline=-3pt,scale=0.5]{
  \draw (120:1) -- (0,0) (240:1) -- (0,0) (0:1) -- (0,0);
}, 
incurs according to the same formula a fugacity of $\a_1=1-t$. The total fugacity is therefore
\[
(1-t)^{\ell-1}\prod_{i=1}^\ell \frac{1-t^{m_{c_i}(\nu)}}{1-t}
=\frac{1}{1-t}\prod_{i=1}^\ell (1-t^{m_{c_i}(\nu)})
\]
This coincides with \cite[(5.7--5.8) p228]{Macdonald}, taking into account
the notation $q_r=(1-t)P^{(r)}$ used by that reference. Proposition~\ref{prop:pieri} follows.

\tikzset{bgline/.style={gray,dotted}}
\tikzset{lattice/.style={fill=none,circle,inner sep=0.5pt}}
\tikzset{puznode/.style={font=\scriptsize}}
\section{Honeycombs as bosonic puzzles}\label{sec:bosonic}
We now pause to provide several alternative graphical representations of honeycombs.

\subsection{Bosonic puzzles}\label{sec:puzzle}
The first transformation is a ``puzzle-like'' representation of honeycombs: we draw a new triangular lattice
which is obtained from the original one by shifting every vertex in such a way that vertices of the old lattice
lie at the centers of up-pointing triangles of the new one;
and each time a line segment of a honeycomb crosses
an edge of this new triangular lattice, we record its multiplicity (and label empty edges with zeroes).
To each edge is therefore associated two numbers, which for improved readability we write in two colors,
purple and cyan, so that around each vertex of a honeycomb the colors alternate as follows:
\tikz[baseline=0,scale=0.5]{
  \draw (0:1) node[right=-1mm,mygreen] {$\ss j$} -- (180:1) node[left=-1mm,myred] {$\ss i'$};
  \draw (120:1) node[left=-1mm,mygreen] {$\ss j'$} -- (300:1) node[right=-1mm,myred] {$\ss i''$};
  \draw (240:1) node[left=-1mm,mygreen] {$\ss j''$} -- (60:1) node[right=-1mm,myred] {$\ss i$};
}.
We then erase the original honeycomb, keeping only the cyan and purple labels.

\begin{ex*}\ 

\begin{center}
\def\size{5}
\def\puzzlescale{1}
\honeycomb%
{3/0/0/0/0/3,1/0/1/0/1/0,0/0/0/1/1/0,1/0/0/1/1/1,0/0/0/1/1/0,3/1/1/0/0/3,0/0/0/0/0/0,0/0/0/0/0/0,1/0/1/0/1/0,3/0/1/0/1/2,0/0/0/1/1/0,0/1/1/1/1/0,2/0/2/0/2/0,0/1/1/2/2/0,0/1/1/0/0/0}
\qquad
\puzzlex%
{3/0/0/0/0/3,1/0/1/0/1/0,0/0/0/1/1/0,1/0/0/1/1/1,0/0/0/1/1/0,3/1/1/0/0/3,0/0/0/0/0/0,0/0/0/0/0/0,1/0/1/0/1/0,3/0/1/0/1/2,0/0/0/1/1/0,0/1/1/1/1/0,2/0/2/0/2/0,0/1/1/2/2/0,0/1/1/0/0/0}
\end{center}
\end{ex*}
The resulting picture is reminiscent of puzzles as defined in \cite{KTW-II}, except that the actual labels associated
to edges are quite different. In fact,
in order to dispel possible confusion, let us point out that this new representation 
is not directly related to the well-known fact that
honeycombs are in bijection with ordinary puzzles: here the puzzles that
we obtain are ``bosonic'', in the sense that they are associated with certain parabolic Verma modules
of $\mathcal U_{t^{1/2}}(\mathfrak{sl}_3)$ (see Section~\ref{sec:RT}).
The ordinary puzzles are ``fermionic'' in nature, and in this context,
the bijection between honeycombs and puzzles can be interpreted as a form of boson-fermion correspondence.

\subsection{The tensor calculus}\label{sec:tensor}
We now implement the same procedure that was formulated in \cite{artic46} and subsequently used in
\cite{artic68,artic71} to turn puzzles into entries of a certain tensor.
In order to do so, we switch to the more traditional
graphical calculus of mathematical physics:
this time, starting from our (new) triangular lattice,
we draw its dual honeycomb lattice, and transport the labels
of each edge of the former to the edge of the latter which it intersects. There is also a conventional
choice of orientation of each edge: we declare that all edges are oriented upwards.
This means that up- and down-pointing triangles now look like
\begin{equation}\label{eq:vertices}
\tikz[baseline=0,scale=1.2]{
  \draw[invarrow] (0,0) -- (-90:1) node[left,mygreen] {$\ss j''$} node[right,myred] {$\ss i''$};
  \draw[arrow] (0,0) -- (30:1) node[below,mygreen] {$\ss j$} node[above,myred] {$\ss i$};
  \draw[arrow] (0,0) -- (150:1) node[above,mygreen] {$\ss j'$} node[below,myred] {$\ss i'$};
}
\qquad
\tikz[baseline=0,scale=1.2]{
  \draw[arrow] (0,0) -- (90:1) node[left,mygreen] {$\ss j''$} node[right,myred] {$\ss i''$};
  \draw[invarrow] (0,0) -- (210:1) node[below,mygreen] {$\ss j$} node[above,myred] {$\ss i$};
  \draw[invarrow] (0,0) -- (-30:1) node[above,mygreen] {$\ss j'$} node[below,myred] {$\ss i'$};
}
\end{equation}
which we call $U$ and $D$ vertices respectively. 

To SouthWest, SouthEast, South oriented edges we shall associate
three (infinite-dimensional) vector spaces $V,V',\bar V''$, each of which equipped with
a basis indexed by pairs of nonnegative integers. 
(The bar will be justified in Section~\ref{sec:Z3}.
In what follows the bar denotes linear algebra duality. 
Changing the orientation of a line corresponds to switching to the dual vector space.
We shall indeed discuss another, perhaps more natural, choice of orientation in Section~\ref{sec:Z3};
it is however less convenient for the generalization we have in mind in Section~\ref{sec:assoc}.)

These vector spaces possess a $L_2\cong\ZZ^2$-grading,
the {\em weight} ($L_2$ was defined in \eqref{eq:defL2}); 
the basis vectors $v_{i,j}\in V$, $(i,j)\in \ZZ^2_{\ge0}$, are homogeneous of weight 
$\wt(v_{i,j})=(i,-i-j,j)$, and similarly,
$\wt(v'_{i',j'})=(j',i',-i'-j')$,
$\wt(\bar v''_{i'',j''})=-(-i''-j'',j'',i'')$.

The fugacities are now encoded as entries of a tensor associated to each vertex; namely,
to a $U$ (resp.\ $D$) vertex is associated an element
of $\bar V''\to V'\otimes V$, resp.\ $V\otimes V'\to \bar V''$.

Specifically, the entries are given by
\begin{align}\label{eq:defU}
U^{i',j',i,j}_{i'',j''}&=
\begin{cases}
\a_{i''}\a_{j''}u^{j,i,j',i',j'',i''}&\wt(v_{i,j})+\wt(v'_{i',j'})=\wt(\bar v''_{i'',j''})
\\
0&\text{else}
\end{cases}
\\\label{eq:defD}
D_{i,j,i',j'}^{i'',j''}&=
\begin{cases}
\rlap{$\a_j$}\phantom{\a_{i''}\a_{j''}y(j,i,j',i',j'',i'')}
&i=j'',\ i'=j,\ i''=j'
\\
0&\text{else}
\end{cases}
\end{align}
where we recall
$\a_i=\prod_{r=1}^{i} (1-t^r)$, and
$u^{j,i,j',i',j'',i''}$ is closely related to the fugacity that was introduced in \eqref{eq:fug}, 
and given by
\begin{equation}\label{eq:defy}
u^{j,i,j',i',j'',i''}
=
\frac{\a_{i+j}}{\a_i\a_{i'}\a_{i''}\a_j\a_{j''}}\,
\,{}_2\phi_1\left({t^{-i},t^{-i'}\atop t^{-(i+j)}};t,t^{i''+1}\right) 
\end{equation}

The condition of equality of weights in the definition \eqref{eq:defU} is known as {\em weight conservation}.
Note that the condition $i=j''$, $i'=j$, $i''=j'$ in the definition \eqref{eq:defD} implies
(but is stronger than) the weight conservation $\wt(v_{i,j})+\wt(v'_{i',j'})=\wt(\bar v''_{i'',j''})$.

One last (standard) graphical convention is that indices that are not marked are summed over. For example, in size $2$,
we can consider
\begin{equation}\label{eq:pic2}
\begin{tikzpicture}[scale=0.8,baseline=0]
  \draw[arrow] (0,0) -- (90:2);
  \draw[invarrow] (0,0) -- (210:2);
  \draw[invarrow] (0,0) -- (-30:2);
\begin{scope}[shift={(90:2)}]
  \draw[arrow] (0,0) -- (30:1) node[below,mygreen] {$\ss 0$} node[above,myred] {$\ss i_0$};
  \draw[arrow] (0,0) -- (150:1) node[above,mygreen] {$\ss 0$} node[below,myred] {$\ss i'_1$};
\end{scope}
\begin{scope}[shift={(210:2)}]
  \draw[invarrow] (0,0) -- (-90:1) node[left,mygreen] {$\ss 0$} node[right,myred] {$\ss i''_0$};
  \draw[arrow] (0,0) -- (150:1) node[above,mygreen] {$\ss 0$} node[below,myred] {$\ss i'_0$};
\end{scope}
\begin{scope}[shift={(-30:2)}]
  \draw[invarrow] (0,0) -- (-90:1) node[left,mygreen] {$\ss 0$} node[right,myred] {$\ss i''_1$};
  \draw[arrow] (0,0) -- (30:1) node[below,mygreen] {$\ss 0$} node[above,myred] {$\ss i_1$};
\end{scope}
\end{tikzpicture}
\end{equation}
Because of the implicit summation, this picture represents a certain entry of a product of $3$ $U$ tensors
and $1$ $D$ tensor.

The main result of this section is the simple reformulation:
\begin{lem}\label{lem:c}
$c^{\lambda,\mu}_\nu$ is the tensor entry corresponding to the following diagram ($n\times n$ triangle inside the
honeycomb lattice):
\begin{equation}\label{eq:c}
\begin{tikzpicture}[scale=0.5,baseline=0]
  \draw[arrow] (0,0) -- (90:2);
  \draw[invarrow] (0,0) -- (210:2);
  \draw[invarrow] (0,0) -- (-30:2);
\begin{scope}[shift={(90:2)}]
  \draw[arrow] (0,0) -- (30:1);
  \draw[arrow] (0,0) -- (150:1) node[above,mygreen] {$\ss 0$} node[below,myred] {$\ss i'_1$};
\end{scope}
\begin{scope}[shift={(210:2)}]
  \draw[invarrow] (0,0) -- (-90:1) node[left,mygreen] {$\ss 0$} node[right,myred] {$\ss i''_0$};
  \draw[arrow] (0,0) -- (150:1) node[above,mygreen] {$\ss 0$} node[below,myred] {$\ss i'_0$};
\end{scope}
\begin{scope}[shift={(-30:2)}]
  \draw[invarrow] (0,0) -- (-90:1) node[left,mygreen] {$\ss 0$} node[right,myred] {$\ss i''_1$};
  \draw[arrow] (0,0) -- (30:1);
\end{scope}
\node[rotate=60] at (60:5) {$\cdots$};
\begin{scope}[shift={(60:10)}]
  \draw[arrow] (0,0) -- (90:2);
  \draw[invarrow] (0,0) -- (210:2);
  \draw[invarrow] (0,0) -- (-30:2);
\begin{scope}[shift={(90:2)}]
  \draw[arrow] (0,0) -- (30:1) node[below,mygreen] {$\ss 0$} node[above,myred] {$\ss i_0$};
  \draw[arrow] (0,0) -- (150:1) node[above,mygreen] {$\ss 0$} node[below,myred] {$\ss i'_{n-1}$};
\end{scope}
\begin{scope}[shift={(210:2)}]
  \draw[invarrow] (0,0) -- (-90:1);
  \draw[arrow] (0,0) -- (150:1) node[above,mygreen] {$\ss 0$} node[below,myred] {$\ss i'_{n-2}$};
\end{scope}
\begin{scope}[shift={(-30:2)}]
  \draw[invarrow] (0,0) -- (-90:1);
  \draw[arrow] (0,0) -- (30:1) node[below,mygreen] {$\ss 0$} node[above,myred] {$\ss i_1$};
\end{scope}
\end{scope}
\path (60:5) ++(0:5) node[rotate=-60] {$\cdots$};
\node at (0:5) {$\cdots$};
\begin{scope}[shift={(0:10)}]
  \draw[arrow] (0,0) -- (90:2);
  \draw[invarrow] (0,0) -- (210:2);
  \draw[invarrow] (0,0) -- (-30:2);
\begin{scope}[shift={(90:2)}]
  \draw[arrow] (0,0) -- (30:1) node[below,mygreen] {$\ss 0$} node[above,myred] {$\ss i_{n-2}$};
  \draw[arrow] (0,0) -- (150:1);
\end{scope}
\begin{scope}[shift={(210:2)}]
  \draw[invarrow] (0,0) -- (-90:1) node[left,mygreen] {$\ss 0$} node[right,myred] {$\ss i''_{n-2}$};
  \draw[arrow] (0,0) -- (150:1);
\end{scope}
\begin{scope}[shift={(-30:2)}]
  \draw[invarrow] (0,0) -- (-90:1) node[left,mygreen] {$\ss 0$} node[right,myred] {$\ss i''_{n-1}$};
  \draw[arrow] (0,0) -- (30:1) node[below,mygreen] {$\ss 0$} node[above,myred] {$\ss i_{n-1}$};
\end{scope}
\end{scope}
\end{tikzpicture}
\end{equation}
with $i'_k=m_k(\lambda)$, $i_k=m_k(\mu)$, $i''_k=m_k(\nu)$, $k=0,\ldots,n-1$.
\end{lem}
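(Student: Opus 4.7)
The plan is to expand the tensor entry \eqref{eq:c} as a sum over all assignments of nonneg-integer purple and cyan labels to the internal edges of the triangular lattice, with the boundary labels fixed as in the statement. Each summand is a product of $U$-entries and $D$-entries, one per small up- or down-pointing triangle. I must show that (i) the nonzero summands are in bijection with $GL(k)$ honeycombs $H$ having boundaries $\lambda,\mu,\nu$, and (ii) each surviving summand equals $\fug(H)$.

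For (i), the constraints in \eqref{eq:defD} kill a summand unless $i=j''$, $i'=j$, $i''=j'$ at every $D$ vertex, identifying each cyan label on a $D$-edge with a specified purple label on a neighbouring edge of the same down-pointing triangle. A geometric inspection shows that every $D$-vertex of the size-$n$ subdivision sits strictly in the interior, so none of its three edges lies on the boundary; hence the prescribed vanishing of boundary cyan labels is not disturbed, while the $D$-relations applied iteratively pin down every internal cyan label in terms of the purple ones. What remains is a sum over purple-only edge labelings. A direct computation in the $L_2$-grading, starting from the weights $\wt(v_{i,j})=(i,-i-j,j)$, etc., shows that the weight-conservation requirement at each $U$ vertex from \eqref{eq:defU} is equivalent to the balance condition \eqref{eq:balance}. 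Consequently these labelings are exactly the honeycombs with boundaries $\lambda,\mu,\nu$ inside the size-$(n-1)$ triangle, with $i'_k=m_k(\lambda)$, $i_k=m_k(\mu)$, $i''_k=m_k(\nu)$.

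For (ii), combining \eqref{eq:defU} with \eqref{eq:defy} and cancelling the prefactor $\a_{i''}\a_{j''}$ against the matching denominator factors of $u$ gives, for each $U$ vertex corresponding to a honeycomb vertex $v$,
\[
U^{i',j',i,j}_{i'',j''} = \frac{\a_{i+j}}{\a_i\a_{i'}\a_j}\,{}_2\phi_1\left({t^{-i},t^{-i'}\atop t^{-(i+j)}};t,t^{i''+1}\right) = \frac{\fug(v)}{\a_j},
\]
where $j$ is the cyan label on the up-right edge of $v$. Each $D$-entry contributes exactly $\a_j$, with $j$ the cyan label on its down-left edge. Inspection of \eqref{eq:vertices} shows that the up-right edge of a $U$ coincides, when internal, with the down-left edge of the $D$ immediately to its upper right, and that the cyan labels there agree. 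This pairs the $\a_j$ denominators from $U$-entries against the $\a_j$ numerators from $D$-entries: the $n$ up-right $U$-edges on the NE boundary have $j=0$ so $\a_j=1$, while the remaining $\binom{n+1}{2}-n=\binom{n}{2}$ internal up-right $U$-edges biject with the $\binom{n}{2}$ $D$-vertices. Hence the total weight of each surviving configuration is $\prod_v \fug(v) = \fug(H)$.

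Summing over $H$ then yields $\sum_H \fug(H)$, which is $c^{\lambda,\mu}_\nu$ by definition of the honeycomb form $\times$. The main obstacle is the geometric bookkeeping: verifying that $D$-relations propagate cyan labels consistently from the vanishing boundary, and that the $U$-$D$ edge pairing is a clean bijection. Both reduce to inspection of \eqref{eq:vertices} together with a straightforward count of vertices and edges of the subdivision.
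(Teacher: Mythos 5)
Your proposal is correct and follows essentially the same route as the paper's own proof: expand the tensor entry over internal edge labelings, use the $D$-constraints of \eqref{eq:defD} to force honeycomb lines across down-pointing triangles and to determine all cyan labels, identify weight conservation at $U$ vertices with the balance condition \eqref{eq:balance}, and pair each $D$-vertex's factor $\a_j$ with the $1/\a_j$ left over in the $U$-vertex below-left of it (with $\a_0=1$ on the NE boundary) to recover $\prod_v\fug(v)=\fug(H)$. Your extra bookkeeping (the $\binom{n+1}{2}$ versus $\binom{n}{2}$ count and the edge--$D$-vertex bijection) just makes explicit what the paper leaves as inspection.
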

(The picture is the generalization of \eqref{eq:pic2} to arbitrary $n$.)
\begin{proof}
At every edge of the diagram, we insert the decomposition of the identity in terms of bases
$v_{i,j},v'_{i',j'},v''_{i'',j''}$.
The definition \eqref{eq:defD} of the entries of $D$ simply means that honeycomb lines go across down-pointing triangles, as well as contributes a factor of $\a_j$ which we combine with the $U$ vertex below to the left of it.
As to the definition \eqref{eq:defU} of the entries of $U$,
it is easy to check that 
$\wt(v_{i,j})+\wt(v'_{i',j'})=\wt(\bar v''_{i'',j''})$ is exactly the balance condition for vertices of a
honeycomb; the contribution to the fugacity is the factor $\a_j$ coming from the $D$ vertex above and right
of it (noting that if the $U$ vertex is on the NorthEast boundary, $j=0$ so no such factor occurs) times
$\a_{i''}\a_{j''}u^{j,i,j',i',j'',i''}$; 
in the absence of a honeycomb vertex, i.e., if $j=i=j'=i'=j''=i''=0$, the result is $1$,
whereas in the presence of a honeycomb vertex we recover exactly the fugacity \eqref{eq:fug}.
Overall, the product over all entries involved in computing the tensor entry of the lemma
reproduces exactly the fugacity of the honeycomb; summing over possible values of $i,j$ or $i',j'$,
$i'',j''$ at every internal edge results in summing over all honeycombs.
\end{proof}

\subsection{The \texorpdfstring{$\ZZ_3$}{Z3}-invariant setting}\label{sec:Z3}
There is a more natural, $\ZZ_3$-invariant, choice of orientation,
which is to orient all edges from say up-pointing triangles to down-pointing triangles;
it leads to new vertices:
\[
\tikz[baseline=0,scale=1.2]{
  \draw[arrow] (0,0) -- (-90:1) node[left,mygreen] {$\ss j''$} node[right,myred] {$\ss i''$};
  \draw[arrow] (0,0) -- (30:1) node[below,mygreen] {$\ss j$} node[above,myred] {$\ss i$};
  \draw[arrow] (0,0) -- (150:1) node[above,mygreen] {$\ss j'$} node[below,myred] {$\ss i'$};
}
\qquad
\tikz[baseline=0,scale=1.2]{
  \draw[invarrow] (0,0) -- (90:1) node[left,mygreen] {$\ss j''$} node[right,myred] {$\ss i''$};
  \draw[invarrow] (0,0) -- (210:1) node[below,mygreen] {$\ss j$} node[above,myred] {$\ss i$};
  \draw[invarrow] (0,0) -- (-30:1) node[above,mygreen] {$\ss j'$} node[below,myred] {$\ss i'$};
}
\]
which we call $\tilde U$ and $\tilde D$ (these will not be used outside of this section).
\rem[gray]{careful that this is called $U_{sym'}$ from the notes, not $U_{sym}=\overset{4}{U}$}

The corresponding tensors are now $\tilde U\in V''\otimes V' \otimes V$ and $\tilde D:
V\otimes V'\otimes V''\to\CC$, where by definition $V''$ is the vector space dual to $\bar V''$ introduced
above; it comes equipped with the dual basis $v''_{i'',j''}$, $(i'',j'')\in \ZZ_{\ge0}^2$,
$\wt(v''_{i'',j''})=(-i''-j'',j'',i'')$.

The $\ZZ_3$-symmetry can also be promoted to their entries: write
\begin{align}\label{eq:defUt}
\tilde U^{i'',j'',i',j',i,j}
&=
\begin{cases}
u^{j,i,j',i',j'',i''}&\wt(v_{i,j})+\wt(v'_{i',j'})+\wt(v''_{i'',j''})=0
\\
0&\text{else}
\end{cases}
\\\label{eq:defDt}
\tilde D_{i,j,i',j',i'',j''}&=
\begin{cases}
\a_i\a_{i'}\a_{i''}
&i=j'',\ i'=j,\ i''=j'
\\
0&\text{else}
\end{cases}
\end{align}
where $u^{\ldots}$ was defined in \eqref{eq:defy}.

In fact, we have an even larger symmetry:
\begin{lem}\label{lem:D6}
Given $j,i,j',i',j'',i''\ge 0$ satisfying the weight conservation $i'-j=i''-j'=i-j''$, $u^{j,i,j',i',j'',i''}$ is invariant under the natural action of the dihedral
group $D_6$ on its variables.
\end{lem}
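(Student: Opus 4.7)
The proof of $D_6$-invariance will proceed by verifying invariance under a set of generators of $D_6$. Since $D_6$ is generated by any reflection together with the $60^\circ$ rotation, it suffices to prove one manifest reflection symmetry and one further nontrivial symmetry. Together these will yield $D_3 \subset D_6$, after which a single additional generator (e.g.\ the central $180^\circ$ rotation) completes the proof via the splitting $D_6 \simeq D_3 \times \ZZ_2$.

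The manifest reflection is read off directly from the first line of \eqref{eq:fug},
\[
\fug = \sum_{r=0}^{\min(i,i')} (-1)^r \, t^{j'r + r(r+1)/2}\, \frac{\varphi_{i+j-r}}{\varphi_{i-r}\,\varphi_r\,\varphi_{i'-r}}.
\]
Under the simultaneous swap $i\leftrightarrow i'$, $j\leftrightarrow j''$, every factor in this summand is preserved; the only nontrivial equality $\varphi_{i+j-r} = \varphi_{i'+j''-r}$ follows from the weight-conservation identity $i+j = i'+j''$. The $u$-denominator $\varphi_i\varphi_{i'}\varphi_{i''}\varphi_j\varphi_{j''}$ is likewise invariant under this swap, so $u$ is invariant. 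Geometrically this is the reflection of the hexagon through the NW--SE vertex axis (fixing the labels $j'$ and $i''$).

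For a nontrivial rotation, I plan to establish the $120^\circ$ symmetry $u^{j,i,j',i',j'',i''} = u^{j',i',j'',i'',j,i}$, which cyclically permutes the three pairs $(i,j)$, $(i',j')$, $(i'',j'')$. The strategy is to apply a classical basic-hypergeometric transformation: the ${}_2\phi_1$ in \eqref{eq:defy} can be recast as a terminating Saalschützian ${}_3\phi_2$, to which Sears' $q$-analogue of Thomae's transformation applies and supplies an $S_3$ permutation symmetry of its three numerator parameters. Reassembling into a ${}_2\phi_1$ with the cyclically shifted arguments and reconciling the $\varphi$-prefactors -- using all three weight-conservation identities $i+j = i'+j''$, $i+j' = i''+j''$, $i'+j' = i''+j$ -- should yield exactly $u^{j',i',j'',i'',j,i}$. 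Combined with Step 1, this produces all of $D_3$; the remaining $\ZZ_2$ (the $180^\circ$ rotation) can then be established by a similar but simpler hypergeometric manipulation, e.g.\ a double application of Heine's transformation.

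The main obstacle is Step 2: locating the correct specialization of Sears' transformation and tracking the accompanying $\varphi$-prefactor rearrangement through all six variables and three linear constraints. Since the ${}_2\phi_1$ in the definition of $u$ has only one obvious symmetry (in its two upper parameters), the $D_6$-invariance is genuinely a ``hidden'' symmetry of the whole expression, and the hypergeometric transformation is what makes it visible. A more conceptual alternative would be to recognize $u$ as (proportional to) a quantum Racah coefficient for $\mathcal U_{t^{1/2}}(\mathfrak{sl}_3)$ whose tetrahedral symmetries are intrinsic, but the elementary path is via basic-hypergeometric identities.
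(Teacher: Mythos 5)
Your Step 1 (the reflection fixing $j'$ and $i''$) is correct and is exactly the paper's first observation, and your group-theoretic skeleton (a reflection plus the $120^\circ$ rotation generate $D_3$, then the central $180^\circ$ rotation gives $D_6\simeq D_3\times\ZZ_2$) is sound; the $180^\circ$ rotation does indeed follow from a single standard Heine--Euler transformation (Gasper--Rahman (III.3)), as the paper notes. The problem is Step 2, which you yourself flag as the main obstacle: it is not carried out, and the specific strategy you propose cannot work as stated. A \emph{terminating Saalsch\"utzian} (i.e.\ balanced) ${}_3\phi_2$ with argument $q$ is summed in closed form by the $q$-Saalsch\"utz identity; if the ${}_2\phi_1$ in \eqref{eq:defy} could be recast in that form, then $u^{\ldots}$ --- equivalently the fugacity \eqref{eq:fug}, which differs from it by a product of $\a_{m}$'s --- would be a ratio of products of $\a_{m}$'s, so all of its zeros would be roots of unity. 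It is not: the all-ones fugacity in Appendix~\ref{app:fug} equals $1+t-t^2$, whose roots are $(1\pm\sqrt5)/2$. Hence there is no balanced ${}_3\phi_2$ rewriting, and Sears'/Thomae's transformation is not the right tool; the paper explicitly records that the missing symmetries ``do not seem to follow from standard transformation formulae of ${}_2\phi_1$''.

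The idea behind your Step 2 --- find a hypergeometric representation with three symmetric numerator parameters so that the rotation becomes a manifest $S_3$ --- is nevertheless exactly what the paper does, just with a different series: Gasper--Rahman (III.8) rewrites the ${}_2\phi_1$ as a ${}_3\phi_1$ with numerator parameters $t^{-i},t^{-i'},t^{-i''}$, denominator parameter $t^{c+1}$ and argument $t^{i+i'+i''+c}$, where $c=j-i'=j'-i''=j''-i$ (with the companion formula in the $j$'s when $c\le 0$). The manifest $S_3$ on $(i,i',i'')$ then carries the $120^\circ$ rotation (and the vertex reflections), and together with the reflection of Step 1 and the Heine-derived $180^\circ$ rotation this generates all of $D_6$. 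As written, your proposal leaves the only genuinely nontrivial symmetry unproven and points at a transformation that does not apply, so the argument is incomplete.
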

Note that the weight conservation itself has dihedral symmetry, restricting
the natural 6-dimensional representation of $D_6$ to a 4-dimensional subrepresentation.
\begin{proof}
Two order 2 transformations are easy to show. Noting $i+j=i'+j''$, we immediately
have from the definition \eqref{eq:defy} of $u^{\ldots}$ the invariance under the reflection
$(j,i,j',i',j'',i'')\mapsto (j'',i',j',i,j,i'')$.

Secondly, one of Heine's transformation formulae \cite[(III.3)]{GR-basic} for ${}_2\phi_1$ 
implies the invariance under $(j,i,j',i',j'',i'')\mapsto (i',j'',i'',j,i,j')$.

Together these generate a $\ZZ_2\times \ZZ_2$ subgroup of $D_6$. The other
symmetries do not seem to follow from standard transformation formulae of ${}_2\phi_1$, 
to the limited knowledge of the author. Instead one can prove them from the following representation
\cite[(III.8)]{GR-basic}:
\[
u^{j,i,j',i',j'',i''}
=\begin{cases}
\displaystyle\frac{{}_3\phi_1\left({t^{-i},t^{-i'},t^{-i''}\atop t^{c+1}};t,t^{i+i'+i''+c}\right)}{\a_i\a_{i'}\a_{i''}\a_c} 
&c\ge 0
\\[3mm]
\displaystyle\frac{{}_3\phi_1\left({t^{-j},t^{-j'},t^{-j''}\atop t^{-c+1}};t,t^{j+j'+j''-c}\right)}{\a_j\a_{j'}\a_{j''}\a_{-c}} 
&c\le 0
\end{cases}
\]
where to make the $\ZZ_3$ symmetry 
$(j,i,j',i',j'',i'')\mapsto (j',i',j'',i'',j,i)$
more apparent, we have written $c=j-i'=j'-i''=j''-i$.
Together, these transformations generate the whole of $D_6$.

\end{proof}

Finally, define $c^{\lambda,\mu,\nu}$ to be the tensor entry represented by the following diagram
\begin{equation}\label{eq:ct}
c^{\lambda,\mu,\nu}=
\begin{tikzpicture}[scale=0.5,baseline=0]
  \draw[invarrow] (0,0) -- (90:2);
  \draw[invarrow] (0,0) -- (210:2);
  \draw[invarrow] (0,0) -- (-30:2);
\begin{scope}[shift={(90:2)}]
  \draw[arrow] (0,0) -- (30:1);
  \draw[arrow] (0,0) -- (150:1) node[above,mygreen] {$\ss 0$} node[below,myred] {$\ss i'_1$};
\end{scope}
\begin{scope}[shift={(210:2)}]
  \draw[arrow] (0,0) -- (-90:1) node[left,mygreen] {$\ss 0$} node[right,myred] {$\ss i''_{n-1}$};
  \draw[arrow] (0,0) -- (150:1) node[above,mygreen] {$\ss 0$} node[below,myred] {$\ss i'_0$};
\end{scope}
\begin{scope}[shift={(-30:2)}]
  \draw[arrow] (0,0) -- (-90:1) node[left,mygreen] {$\ss 0$} node[right,myred] {$\ss i''_{n-2}$};
  \draw[arrow] (0,0) -- (30:1);
\end{scope}
\node[rotate=60] at (60:5) {$\cdots$};
\begin{scope}[shift={(60:10)}]
  \draw[invarrow] (0,0) -- (90:2);
  \draw[invarrow] (0,0) -- (210:2);
  \draw[invarrow] (0,0) -- (-30:2);
\begin{scope}[shift={(90:2)}]
  \draw[arrow] (0,0) -- (30:1) node[below,mygreen] {$\ss 0$} node[above,myred] {$\ss i_0$};
  \draw[arrow] (0,0) -- (150:1) node[above,mygreen] {$\ss 0$} node[below,myred] {$\ss i'_{n-1}$};
\end{scope}
\begin{scope}[shift={(210:2)}]
  \draw[arrow] (0,0) -- (-90:1);
  \draw[arrow] (0,0) -- (150:1) node[above,mygreen] {$\ss 0$} node[below,myred] {$\ss i'_{n-2}$};
\end{scope}
\begin{scope}[shift={(-30:2)}]
  \draw[arrow] (0,0) -- (-90:1);
  \draw[arrow] (0,0) -- (30:1) node[below,mygreen] {$\ss 0$} node[above,myred] {$\ss i_1$};
\end{scope}
\end{scope}
\path (60:5) ++(0:5) node[rotate=-60] {$\cdots$};
\node at (0:5) {$\cdots$};
\begin{scope}[shift={(0:10)}]
  \draw[invarrow] (0,0) -- (90:2);
  \draw[invarrow] (0,0) -- (210:2);
  \draw[invarrow] (0,0) -- (-30:2);
\begin{scope}[shift={(90:2)}]
  \draw[arrow] (0,0) -- (30:1) node[below,mygreen] {$\ss 0$} node[above,myred] {$\ss i_{n-2}$};
  \draw[arrow] (0,0) -- (150:1);
\end{scope}
\begin{scope}[shift={(210:2)}]
  \draw[arrow] (0,0) -- (-90:1) node[left,mygreen] {$\ss 0$} node[right,myred] {$\ss i''_1$};
  \draw[arrow] (0,0) -- (150:1);
\end{scope}
\begin{scope}[shift={(-30:2)}]
  \draw[arrow] (0,0) -- (-90:1) node[left,mygreen] {$\ss 0$} node[right,myred] {$\ss i''_0$};
  \draw[arrow] (0,0) -- (30:1) node[below,mygreen] {$\ss 0$} node[above,myred] {$\ss i_{n-1}$};
\end{scope}
\end{scope}
\end{tikzpicture}
\end{equation}
with $i'_k=m_k(\lambda)$, $i_k=m_k(\mu)$, $i''_k=m_k(\nu)$, $k=0,\ldots,n-1$. Note that
$\nu$ is now read in reverse at the bottom; in fact, let us denote $\nu^*$ to be
the partition such that $m_{n-1-k}(\nu^*)=m_k(\nu)$ for $k=0,\ldots,n-1$. In terms of Young
diagrams, this corresponds to taking the complement inside the $k\times (n-1)$ rectangle
and then rotating 180 degrees.

Though this choice of orientation may be more natural,
the $c^{\lambda,\mu,\nu}$ thus defined are not exactly computing the structure constants we are after. Rather,
we have the following.
Define, for any $\lambda\in \P_{k,m}$, 
\begin{equation}\label{eq:defh}
h_\lambda=
\prod_{r=0}^{n-1}\a_{m_r(\lambda)}
\end{equation}
Then
\begin{prop}\label{prop:Z3}
(a) $c^{\lambda,\mu,\nu}$ is related
to $c^{\lambda,\mu}_\nu$ by
\[
c^{\lambda,\mu,\nu}=
h_\nu^{-1}
c^{\lambda,\mu}_{\nu^*}
\]
(b) $c^{\lambda,\mu,\nu}$ is invariant under cyclic permutation of the $\lambda,\mu,\nu$.
\end{prop}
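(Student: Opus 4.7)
The plan is to prove (a) by an edge-by-edge comparison of the $\a$-factors distributed by the two tensor decompositions, and (b) by a direct symmetry argument. For (a), I would first observe that the diagram \eqref{eq:ct} is the same as \eqref{eq:c} except that N--S arrows are reversed and the $\nu$-labels on the S-boundary are written in reverse order. Since $m_k(\nu^*)=m_{n-1-k}(\nu)$, the honeycombs enumerated by \eqref{eq:ct} are precisely those with NW, NE and S boundaries equal to $\lambda$, $\mu$, $\nu^*$; thus $c^{\lambda,\mu,\nu}$ and $c^{\lambda,\mu}_{\nu^*}$ are sums over the same set of honeycombs, and the only question is the ratio of their per-honeycomb weights. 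The factor $u^{\ldots}$ appears at each up-pointing vertex in both, so the comparison reduces to a ratio of $\a$-factors: in the original decomposition each $U$ gives $\a_{i''}\a_{j''}$ on its S edge and each $D$ gives $\a_j$ on its SW edge, while in the new one each $\tilde D$ gives $\a_i\a_{i'}\a_{i''}$ on its three edges and $\tilde U$ gives no $\a$-factors.

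The main step is to show that this ratio equals $h_\nu^{-1}$ for every honeycomb. Rewriting each $\a$-factor product as a product over lattice edges (sorted by orientation and by internal vs.\ boundary status), the NW- and NE-boundary edges contribute trivially because the convention ``green labels vanish on the outer boundary'' makes all boundary $\a_j$'s and $\a_{j''}$'s equal to $1$; the S-boundary edges contribute $1/h_\nu$, since their red labels are precisely $m_0(\nu),\ldots,m_{n-1}(\nu)$ in some order. It remains to verify that the internal edges collectively contribute $1$; this follows from the two identities
\[
\prod_{\substack{\text{NE--SW}\\ \text{internal}}}\a_{i_e}=\prod_{\substack{\text{N--S}\\ \text{internal}}}\a_{j''_e},
\qquad
\prod_{\substack{\text{NE--SW}\\ \text{internal}}}\a_{j_e}=\prod_{\substack{\text{NW--SE}\\ \text{internal}}}\a_{i'_e},
\]
each of which is obtained by rewriting the left-hand side as a product over $\tilde D$-vertices (each internal NE--SW edge being the SW edge of a unique $\tilde D$) and applying the $\tilde D$-selection rules $i=j''$ and $i'=j$ respectively.

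Part (b) will then follow from the built-in $\ZZ_3$-symmetry of the setup. The diagram \eqref{eq:ct} is manifestly invariant under $120^\circ$ rotation, which cyclically permutes the three sides and hence the three boundary partitions; the $\tilde D$-tensor is $\ZZ_3$-symmetric by direct inspection of \eqref{eq:defDt}, and the $\tilde U$-tensor is $\ZZ_3$-symmetric as a special case of the $D_6$-symmetry of $u^{\ldots}$ established in Lemma~\ref{lem:D6}. I expect the main obstacle to lie in the edge-counting bookkeeping of part (a): no single cancellation is hard, but one has to track three edge orientations and three boundary sides and verify that the boundary conventions close up exactly to leave the single factor $h_\nu^{-1}$.
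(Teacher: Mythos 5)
Your proposal is correct and follows essentially the same route as the paper: both proofs reduce (a) to a per-configuration ratio of $\a$-factors that cancels on internal edges and leaves $h_\nu^{-1}$ from the bottom boundary, and both deduce (b) from the $\ZZ_3$ part of the $D_6$-symmetry of $u^{\ldots}$ in Lemma~\ref{lem:D6}. The only difference is organizational: where you distribute the factors over edges and invoke two product identities derived from the $\tilde D$-selection rules, the paper packages the same cancellation into the single observation $\tilde U=\a_{i''}^{-1}\a_{j''}^{-1}U$, $\tilde D=\a_{i''}\a_{j''}D$, so that all ratio factors sit on vertical edges and cancel pairwise there.
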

\begin{proof}
Let us compare the diagrams \eqref{eq:c} and \eqref{eq:ct} corresponding to $c^{\lambda,\mu}_{\nu^*}$ and $c^{\lambda,\mu,\nu}$ respectively. 
The labels on the boundaries match; the vertices have fugacities given by
\eqref{eq:defU}, \eqref{eq:defD}
and
\eqref{eq:defUt} \eqref{eq:defDt} respectively. The conditions for being
nonzero also match, so the only difference is in the nonzero entries. 
One has the following relation between them:
\begin{align*}
\tilde U&=\a_{i''}^{-1}\a_{j''}^{-1} U
\\
\tilde D&=\a_{i''}\a_{j''} D
\end{align*}
The factors in the tilded fugacities only occur on vertical edges.
Because each (vertical) edge that is summed over in the diagram connects a $U$ and a $D$ vertex, the factors above exactly cancel, so that the only difference occurs at the boundary (vertical) edges. The latter only occur at the bottom boundary of the diagram, and there one has all $j''=0$ and the $i''$ labels form the sequence $m_k(\nu)$, $k=0,\ldots,n-1$. This leads to
the desired relation in view of the definition \eqref{eq:defh}.

Due to Lemma~\ref{lem:D6}, the fugacities $\tilde U$ and $\tilde D$
are invariant by cyclic shift $(j,i,j',i',j'',i'')\mapsto(j',i',j'',i'',j,i)$.
This implies $c^{\lambda,\mu,\nu}=c^{\mu,\nu,\lambda}$.
\end{proof}

\begin{rmk*}
The $c^{\lambda,\mu,\nu}$ have the following interpretation.
Consider {\em dual}\/ Hall--Littlewood polynomials
\[
P_\lambda:=h_\lambda P^{\lambda^*}
\]
(these are a natural ``finitized'' version of the usual dual Hall--Littlewood
polynomials). Then our main Theorem \ref{thm:main} is trivially equivalent
to the fact that $c^{\lambda,\mu,\nu}$ is the coefficient of $P_\nu$ in
the expansion of $P^\lambda P^\mu$. Of course such an interpretation implies
that $c^{\lambda,\mu,\nu}$ is invariant under {\em every}\/ permutation of $\lambda,\mu,\nu$.

Similarly, had we used the fugacities
\begin{align}\label{eq:defu}
u_{j,i,j',i',j'',i''}&=\a_j \a_i \a_{j'} \a_{i'}\a_{j''}\a_{i''}
u^{j,i,j',i',j'',i''}
\\\notag
&=
\a_{i+j}\a_{j'}
\,{}_2\phi_1\left({t^{-i},t^{-i'}\atop t^{-(i+j)}};t,t^{i''+1}\right) 
\end{align}
for $U$ vertices 
instead of $u^{\ldots}$, correspondingly redefined the fugacity of $D$ vertices to be
$(\a_j\a_{j'}\a_{j''})^{-1}$,
and labelled our puzzles counterclockwise, we would find a quantity $c_{\lambda,\mu,\nu}$ which
is nothing but the coefficient of $P^\nu$ in the expansion of $P_\lambda P_\mu$.
\end{rmk*}

\subsection{The representation theory}\label{sec:RT}
Although outside the scope of the present paper, we briefly sketch the representation-theoretic
interpretation of $U$ and $D$. $V$, $V'$ and $V''$ can be endowed with 
an action of the quantized algebra $\mathcal U_{t^{1/2}}(\mathfrak{sl}_3)$,
in such a way that they are parabolic Verma modules for distinct parabolic subalgebras,
their parabolic subalgebras and highest weights being related
to each other by 120 degree rotation of the weight lattice (the weights defined above are always
relative to the highest weight). Then one can show that there exist
intertwiners $V\otimes V'\to \bar V''$ and $\bar V''\to V'\otimes V$ which are unique up to normalization. There is only one parameter in the definition of 
such highest weights; call it $s$.
Finally, take the limit $t^s\to 0$;
the intertwiners then take the form \eqref{eq:defU}--\eqref{eq:defD}
(up to switching them, depending on the conventional sign of $s$).

\begin{rmk*}
Had we kept $s$ finite, we would have obtained instead the product rule for 
rank 1 Bethe wave functions of arbitrary spin
(also known as ``spin Hall--Littlewood functions'' in the recent literature), 
see e.g.~\cite{Bor-symfun} for their definition. This would encompass both the product rule of the present paper and those
of \cite{artic46} and \cite{artic68} (see also \cite{artic71} for the justification of the occurrence of the root system of
$\mathfrak{sl}_3$).
\end{rmk*}

\section{Associativity}\label{sec:assoc}
\subsection{The 3D geometry intepretation}\label{sec:3d}
We first briefly recall the interpretation of associativity in terms of three-dimensional geometry,
as advocated in \cite{KTW-octa}. Expanding $(P^\lambda P^\mu)P^\nu=P^\lambda(P^\mu P^\nu)$, we find
the quadratic equations
\[
\sum_{\sigma\in \P_{k,n}} c^{\lambda,\mu}_\sigma c^{\sigma,\nu}_{\rho}
=
\sum_{\tau\in \P_{k,n}} c^{\lambda,\tau}_{\rho} c^{\mu,\nu}_\tau 
\qquad
\forall\ \lambda,\mu,\nu,\rho\in\P_{k,n}
\]
It is natural to depict this equation in the usual two dual ways as
\begin{equation}\label{eq:assoc}
\begin{tikzpicture}[baseline=-3pt]
\shade[top color=white!50!black,bottom color=white] (-1,0) -- (0,1) -- (1,0) -- cycle;
\shade[bottom color=white!50!black,top color=white] (-1,0) -- (0,-1) -- (1,0) -- cycle;
\draw (-1,0) -- node {$\lambda\,$} (0,1) -- node {$\mu$} (1,0) -- node {$\nu$} (0,-1) -- node {$\rho$} cycle;
\draw (1,0) -- node {$\sigma$} (-1,0);
\end{tikzpicture}
=
\begin{tikzpicture}[baseline=-3pt]
\shade[left color=white!50!black,right color=white] (0,-1) -- (1,0) -- (0,1) -- cycle;
\shade[right color=white!50!black,left color=white] (0,-1) -- (-1,0) -- (0,1) -- cycle;
\draw (-1,0) -- node {$\lambda\,$} (0,1) -- node {$\mu$} (1,0) -- node {$\nu$} (0,-1) -- node {$\rho$} cycle;
\draw (0,-1) -- node {$\tau$} (0,1);
\end{tikzpicture}
\qquad
\text{or}
\qquad
\begin{tikzpicture}[baseline=-3pt,rotate=90]
\draw[ultra thick,arrow] (-1,-1) -- node[below] {$\nu$} (-0.25,0);
\draw[ultra thick,arrow] (0.25,0) -- node[right] {$\sigma$} (-0.25,0);
\draw[ultra thick,arrow] (0.25,0) -- node[above] {$\lambda$} (1,1);
\draw[ultra thick,invarrow] (-1,1) -- node[below] {$\rho$} (-0.25,0);
\draw[ultra thick,arrow] (1,-1) -- node[above] {$\mu$} (0.25,0);
\end{tikzpicture}
=
\begin{tikzpicture}[baseline=-3pt]
\draw[ultra thick,invarrow] (-1,-1) -- node[left] {$\rho$} (-0.25,0);
\draw[ultra thick,invarrow] (-0.25,0) -- node[above] {$\tau$} (0.25,0);
\draw[ultra thick,invarrow] (0.25,0) -- node[right] {$\mu$} (1,1);
\draw[ultra thick,arrow] (-1,1) -- node[left] {$\lambda$} (-0.25,0);
\draw[ultra thick,arrow] (1,-1) -- node[right] {$\nu$} (0.25,0);
\end{tikzpicture}
\end{equation}

Here the shaded triangles should be filled with an actual expression for $c^{\lambda,\mu}_\nu$ such as the
puzzles of Section~\ref{sec:puzzle};
or equivalently, the thick lines of the dual picture are really multiple lines, and the vertex a multi-vertex
similar to \eqref{eq:c}.

Let us focus on the left picture (the right picture will be used extensively in the next sections).
It is convenient to imagine it as a tetrahedron viewed from the top, where the l.h.s.\ 
corresponds to the full (opaque) tetrahedron with its top two faces shown, whereas the r.h.s.\ is
a view of its bottom two faces as if the tetrahedron had been excavated.

If we subdivide each triangle into smaller triangles, in the spirit of puzzles:
\[
\begin{tikzpicture}[baseline=-3pt,scale=1.5]
\shade[top color=white!50!black,bottom color=white] (-1,0) -- (0,1) -- (1,0) -- cycle;
\shade[bottom color=white!50!black,top color=white] (-1,0) -- (0,-1) -- (1,0) -- cycle;
\draw (-1,0) -- (0,1) -- (1,0) -- (0,-1) -- cycle;
\draw (1,0) -- (-1,0);
\draw (-0.75,-0.25) -- (0.25,0.75);
\draw (-0.5,-0.5) -- (0.5,0.5);
\draw (-0.25,-0.75) -- (0.75,0.25);
\draw (0.75,-0.25) -- (-0.25,0.75);
\draw (0.5,-0.5) -- (-0.5,0.5);
\draw (0.25,-0.75) -- (-0.75,0.25);
\end{tikzpicture}
=
\begin{tikzpicture}[baseline=-3pt,scale=1.5]
\shade[left color=white!50!black,right color=white] (0,-1) -- (1,0) -- (0,1) -- cycle;
\shade[right color=white!50!black,left color=white] (0,-1) -- (-1,0) -- (0,1) -- cycle;
\draw (-1,0) -- (0,1) -- (1,0) -- (0,-1) -- cycle;
\draw (0,-1) -- (0,1);
\draw (-0.75,-0.25) -- (0.25,0.75);
\draw (-0.5,-0.5) -- (0.5,0.5);
\draw (-0.25,-0.75) -- (0.75,0.25);
\draw (0.75,-0.25) -- (-0.25,0.75);
\draw (0.5,-0.5) -- (-0.5,0.5);
\draw (0.25,-0.75) -- (-0.75,0.25);
\end{tikzpicture}
\]
then one should correspondingly think of the tetrahedron as subdivided into smaller polyhedra. One finds
that the correct subdivision is into three types of polyhedra:
\begin{itemize}
\item {\em Tetrahedra}\/ which are obtained by homothecy from the full tetrahedron.
\item Other tetrahedra, called in what followed {\em dual tetrahedra}, obtained by top-bottom mirror
symmetry from the previous kind.
\item {\em Octahedra}.
\end{itemize}
All these polyhedra have equal edge length, which is $1/n$ times the edge length of the original tetrahedron.

The idea is then to prove associativity, i.e., \eqref{eq:assoc}, step by step by excavating the large
tetrahedron one small polyhedron at a time. Each kind of polyhedron 
(tetrahedron, dual tetrahedron and octahedron) corresponds to a {\em local}\/ transformation
of the puzzle-like objects.
This idea is realized in the context of hives in \cite{KTW-octa} (with uniform fugacities, corresponding
to our $t=0$ case). (See also Appendix~\ref{app:assoc} for a $n=3$ example in our context.)

In order to implement these local transformations for honeycombs, 
we need to extend the formalism of Section~\ref{sec:bosonic}
to the geometry of the root lattice of $\mathfrak{sl}_4$ (as opposed to $\mathfrak{sl}_3$). The rationale
for such a shift of perspective will be given elsewhere \cite{SchubIV}. We only remark in passing that since the picture itself
has been lifted from two dimensions to three, it is natural to also upgrade the root lattice from two to three dimensions.

\subsection{The tensor calculus revisited}
Recall from Section~\ref{sec:tensor}
that the first step is an assignment of a vector space/a set of labels to each edge of our diagrams.
Compared to Section~\ref{sec:bosonic}, we will need more types of edges. The type of an (oriented)
edge is given by a subset $A$ of $\{0,1,2,3\}\cong \ZZ_4$, and the corresponding vector space denoted $V_A$;
in practice we shall only consider $|A|=1$, i.e., $V_\alpha$, $\alpha=0,\ldots,3$,
and $|A|=2$ with the specific choice $V_{\alpha\,\alpha+1}$,
$\alpha=0,\ldots,3$.

A basis of $V_A$ is labelled as follows: it is a collection of nonnegative integers $a_{\beta,\alpha}$ with 
$\beta\not\in A$
and $\alpha\in A$. For $|A|=1$ this means three labels, and for $|A|=2$, four labels.

As in Section~\ref{sec:bosonic}, when we draw the oriented lines corresponding to various vector spaces $V_A$,
we always give them the same direction to ease the interpretation of diagrams.
Our convention is that $V_0$ goes SouthWest, 
$V_1$ SouthEast, $V_2$ NorthEast, $V_3$ NorthWest, and $V_{01}$ South, $V_{12}$ East, $V_{23}$ North, $V_{30}$ West.
Redundantly, we also write $A$ next to the line carrying the space $V_A$.
One more convention is that we draw the lines of $V_A$, $|A|=2$, as double lines.

Finally, the allowed vertices come in the following types:
\begin{itemize}
\item Trivalent vertices that correspond to linear maps $V_{\alpha\,\beta}\to V_\beta\otimes V_\alpha$; they are the analogues of up-pointing triangles. We only use the following:
\begin{center}
\begin{tikzpicture}[baseline=-3pt,scale=-1.2]
\draw[invarrow=0.25] (150:1) -- node[below left] {$1$} (-0.05,0) -- node[right] {$01$} ++(0,-1);
\draw[invarrow=0.25] (30:1) -- node[below right] {$0$} (0.05,0) -- ++(0,-1);
\draw decorate [thin,decoration={markings,mark = at position 0.5 with {\arrow[scale=2]{<}}}] { (0,0) -- ++(0,-1) };
\end{tikzpicture}
\qquad
\begin{tikzpicture}[baseline=-3pt,rotate=270,scale=1.2]
\draw[invarrow=0.25] (150:1) -- node[above left] {$2$} (-0.05,0) -- node[above] {$12$} ++(0,-1);
\draw[invarrow=0.25] (30:1) -- node[below left] {$1$} (0.05,0) -- ++(0,-1);
\draw decorate [thin,decoration={markings,mark = at position 0.5 with {\arrow[scale=2]{<}}}] { (0,0) -- ++(0,-1) };
\end{tikzpicture}
\qquad
\begin{tikzpicture}[baseline=-3pt,scale=1.2]
\draw[invarrow=0.25] (150:1) -- node[below left] {$3$} (-0.05,0) -- node[left] {$23$} ++(0,-1);
\draw[invarrow=0.25] (30:1) -- node[below right] {$2$} (0.05,0) -- ++(0,-1);
\draw decorate [thin,decoration={markings,mark = at position 0.5 with {\arrow[scale=2]{<}}}] { (0,0) -- ++(0,-1) };
\end{tikzpicture}
\qquad
\begin{tikzpicture}[baseline=-3pt,rotate=90,scale=1.2]
\draw[invarrow=0.25] (150:1) -- node[below right] {$0$} (-0.05,0) -- node[below] {$30$} ++(0,-1);
\draw[invarrow=0.25] (30:1) -- node[above right] {$3$} (0.05,0) -- ++(0,-1);
\draw decorate [thin,decoration={markings,mark = at position 0.5 with {\arrow[scale=2]{<}}}] { (0,0) -- ++(0,-1) };
\end{tikzpicture}
\end{center}
\item Trivalent vertices that correspond to linear maps $V_\alpha\otimes V_\beta\to V_{\alpha\beta}$; 
they are the analogue of down-pointing triangles. Similarly, we use the following:
\begin{center}
\begin{tikzpicture}[baseline=-3pt,scale=1.2]
\draw[arrow=0.25] (150:1) -- node[below left] {$1$} (-0.05,0) -- node[left] {$01$} ++(0,-1);
\draw[arrow=0.25] (30:1) -- node[below right] {$0$} (0.05,0) -- ++(0,-1);
\draw decorate [thin,decoration={markings,mark = at position 0.5 with {\arrow[scale=2]{>}}}] { (0,0) -- ++(0,-1) };
\end{tikzpicture}
\qquad
\begin{tikzpicture}[baseline=-3pt,rotate=90,scale=1.2]
\draw[arrow=0.25] (150:1) -- node[below right] {$2$} (-0.05,0) -- node[below] {12} ++(0,-1);
\draw[arrow=0.25] (30:1) -- node[above right] {$1$} (0.05,0) -- ++(0,-1);
\draw decorate [thin,decoration={markings,mark = at position 0.5 with {\arrow[scale=2]{>}}}] { (0,0) -- ++(0,-1) };
\end{tikzpicture}
\qquad
\begin{tikzpicture}[scale=-1,baseline=-3pt,scale=1.2]
\draw[arrow=0.25] (150:1) -- node[below left] {$3$} (-0.05,0) -- node[right] {23} ++(0,-1);
\draw[arrow=0.25] (30:1) -- node[below right] {$2$} (0.05,0) -- ++(0,-1);
\draw decorate [thin,decoration={markings,mark = at position 0.5 with {\arrow[scale=2]{>}}}] { (0,0) -- ++(0,-1) };
\end{tikzpicture}
\qquad
\begin{tikzpicture}[baseline=-3pt,rotate=270,scale=1.2]
\draw[arrow=0.25] (150:1) -- node[above left] {$0$} (-0.05,0) -- node[above] {30} ++(0,-1);
\draw[arrow=0.25] (30:1) -- node[below left] {$3$} (0.05,0) -- ++(0,-1);
\draw decorate [thin,decoration={markings,mark = at position 0.5 with {\arrow[scale=2]{>}}}] { (0,0) -- ++(0,-1) };
\end{tikzpicture}
\end{center}
\item Elements in $V_A\otimes V_{\bar A}$, where $\bar A$ is the complement of $A$ in $\{0,1,2,3\}$, which we only
use for $|A|=|\bar A|=2$:
\begin{center}
\begin{tikzpicture}[baseline=-3pt,rotate=90,scale=1.2]
\draw (-0.5,-0.05) -- (0.5,-0.05) (-0.5,0.05) -- node[left,pos=0.25] {$01$} node[left,pos=0.75] {$23$} (0.5,0.05);
\draw decorate[thin,decoration={markings,mark = at position 0.35 with {\arrow[scale=2]{<}}}] { (-0.5,0) -- (0.5,0) };
\draw decorate[thin,decoration={markings,mark = at position 0.8 with {\arrow[scale=2]{>}}}] { (-0.5,0) -- (0.5,0) };
\end{tikzpicture}
\qquad
\begin{tikzpicture}[baseline=-3pt,scale=1.2]
\draw (0,-0.05) -- (1,-0.05) (0,0.05) -- node[above,pos=0.25] {$12$} node[above,pos=0.75] {$30$} (1,0.05);
\draw decorate[thin,decoration={markings,mark = at position 0.35 with {\arrow[scale=2]{<}}}] { (0,0) -- (1,0) };
\draw decorate[thin,decoration={markings,mark = at position 0.8 with {\arrow[scale=2]{>}}}] { (0,0) -- (1,0) };
\end{tikzpicture}
\end{center}
\item Their inverses $V_A\otimes V_{\bar A}\to\mathbb \CC$:
\begin{center}
\begin{tikzpicture}[baseline=-3pt,rotate=90,scale=1.2]
\draw (-0.5,-0.05) -- (0.5,-0.05) (-0.5,0.05) -- node[left,pos=0.25] {$23$} node[left,pos=0.75] {$01$} (0.5,0.05);
\draw decorate[thin,decoration={markings,mark = at position 0.35 with {\arrow[scale=2]{>}}}] { (-0.5,0) -- (0.5,0) };
\draw decorate[thin,decoration={markings,mark = at position 0.8 with {\arrow[scale=2]{<}}}] { (-0.5,0) -- (0.5,0) };
\end{tikzpicture}
\qquad
\begin{tikzpicture}[baseline=-3pt,scale=1.2]
\draw (0,-0.05) -- (1,-0.05) (0,0.05) -- node[above,pos=0.25] {$30$} node[above,pos=0.75] {$12$} (1,0.05);
\draw decorate[thin,decoration={markings,mark = at position 0.35 with {\arrow[scale=2]{>}}}] { (0,0) -- (1,0) };
\draw decorate[thin,decoration={markings,mark = at position 0.8 with {\arrow[scale=2]{<}}}] { (0,0) -- (1,0) };
\end{tikzpicture}
\end{center}
\end{itemize}

\begin{ex*}
The labelling around a vertex $V_{01}\to V_1\otimes V_0$ is given by
\[
\begin{tikzpicture}[baseline=-3pt,scale=-1.2]
\draw[invarrow=0.25] (150:1) -- node[below left] {$1$} (-0.05,0) -- node[right] {$01$} ++(0,-1);
\draw[invarrow=0.25] (30:1) -- node[below right] {$0$} (0.05,0) -- ++(0,-1);
\draw decorate [thin,decoration={markings,mark = at position 0.5 with {\arrow[scale=2]{<}}}] { (0,0) -- ++(0,-1) };
\end{tikzpicture}
\quad
\mapsto
\quad
\begin{tikzpicture}[baseline=-3pt,scale=-1.2]
\draw[invarrow=0.45] (150:1) -- 
node[above,pos=0.45] {$\ss a_{0,1}$}
node[pos=0.45] {$\ss a_{2,1}$}
node[below,pos=0.45] {$\ss a_{3,1}$}
(-0.05,0) -- ++(0,-1);
\draw[invarrow=0.45] (30:1) -- 
node[above,pos=0.45] {$\ss a_{1,0}$} 
node[pos=0.45] {$\ss a_{2,0}$} 
node[below,pos=0.45] {$\ss a_{3,0}$} 
(0.05,0) -- ++(0,-1);
\path (0,0) -- node[above] {$\ss a'_{2,0}\ a'_{2,1}$}
node {$\ss a'_{3,0}\ a'_{3,1}$}  ++(0,-1);
\draw decorate [thin,decoration={markings,mark = at position 0.25 with {\arrow[scale=2]{<}}}] { (0,0) -- ++(0,-1) };
\end{tikzpicture}
\]
Note that the four labels of the double line also appear on the other edges. This does not mean that they
are equal!
When there is a risk of confusion, we shall use primes or superscripts to distinguish identically named labels of different edges.
\end{ex*}

The {\em weight}\/ of a label (or of its corresponding basis vector) is equal to $\sum a_{\alpha,\beta}(e_\alpha-e_\beta)$, where
$e_0,\ldots,e_3$ form a basis of $\RR^4$ (note that the $e_\alpha-e_\beta$ are nothing but the roots of $\mathfrak{sl}(4)$).
A major difference with the $\mathfrak{sl}_3$  setup is that
the weight of a label is not enough to reconstruct the label in the case $|A|=2$ (in other words, some
weight spaces have dimension greater than $1$).

We must now assign fugacities to these vertices. All our fugacities will be $\ZZ_4$-invariant, in the sense
that shifting all indices $\alpha\in \{0,1,2,3\}$ by $1\pmod4$ will leave them invariant. We also have weight
conservation: the fugacity will be zero unless the sum of weights of incoming edges is equal to the
sum of weights of outgoing edges.

The duality pairings are easy to define.
Note that labels of $V_A$ and of $V_{\bar A}$ correspond bijectively via $a_{\beta,\alpha}\leftrightarrow a_{\alpha,\beta}$,
and that their weights are negatives of each other. The rule is that these labels must match, and then the fugacity
is given by $\prod_{\alpha\in A,\beta\not\in A} \a_{a_{\alpha,\beta}}^\pm$ where the sign is $+$ (resp.\ $-$) for incoming (resp.\ outgoing) arrows:
\begin{align}\label{eq:defdual}
\begin{tikzpicture}[baseline=-3pt,scale=1.2]
\draw (0,-0.05) -- (1,-0.05) (0,0.05) -- node[above,pos=0.25] {12} node[above,pos=0.75] {30} (1,0.05);
\node at (-0.4,0.3) {$\ss a_{1,0}$};
\node at (-0.4,0.1) {$\ss a_{2,0}$};
\node at (-0.4,-0.1) {$\ss a_{1,3}$};
\node at (-0.4,-0.3) {$\ss a_{2,3}$};
\node at (1.4,0.3) {$\ss a_{0,1}$};
\node at (1.4,0.1) {$\ss a_{0,2}$};
\node at (1.4,-0.1) {$\ss a_{3,1}$};
\node at (1.4,-0.3) {$\ss a_{3,2}$};
\draw decorate[thin,decoration={markings,mark = at position 0.35 with {\arrow[scale=2]{>}},mark = at position 0.8 with {\arrow[scale=2]{<}}}] { (0,0) -- (1,0) };
\end{tikzpicture}
&=\prod_{\alpha=3,0,\beta=1,2} \delta_{a_{\alpha,\beta},a_{\beta,\alpha}}\a_{a_{\alpha,\beta}}
\\\label{eq:defdualb}
\begin{tikzpicture}[baseline=-3pt,scale=1.2]
\draw (0,-0.05) -- (1,-0.05) (0,0.05) -- node[above,pos=0.25] {$30$} node[above,pos=0.75] {$12$} (1,0.05);
\draw decorate[thin,decoration={markings,mark = at position 0.35 with {\arrow[scale=2]{<}},mark = at position 0.8 with {\arrow[scale=2]{>}}}] { (0,0) -- (1,0) };
\node at (1.4,0.3) {$\ss a_{1,0}$};
\node at (1.4,0.1) {$\ss a_{2,0}$};
\node at (1.4,-0.1) {$\ss a_{1,3}$};
\node at (1.4,-0.3) {$\ss a_{2,3}$};
\node at (-0.4,0.3) {$\ss a_{0,1}$};
\node at (-0.4,0.1) {$\ss a_{0,2}$};
\node at (-0.4,-0.1) {$\ss a_{3,1}$};
\node at (-0.4,-0.3) {$\ss a_{3,2}$};
\end{tikzpicture}
&=\prod_{\alpha=3,0,\beta=1,2} \delta_{a_{\alpha,\beta},a_{\beta,\alpha}}\a_{a_{\alpha,\beta}}^{-1}
\end{align}
and similarly for $01/23$.
In other words, 
via the identification $V_{\bar A}\cong \bar V_A$, our bases are dual of each other up to normalization.

The ``down-pointing'' maps $V_{\alpha}\otimes V_{\alpha+1}\to V_{\alpha\,\alpha+1}$ are equally simple. 
Once again labels naturally come in pairs whose contribution to the weight cancels, and the rule is that
these labels must match.
Let us for example take $\alpha=2$:
\begin{equation}\label{eq:defDD}
\begin{tikzpicture}[baseline=-3pt,scale=-1.2]
\draw[arrow=0.45] (150:1) -- 
node[below,pos=0.45] {$\ss a_{2,3}$}
node[above,pos=0.45] {$\ss a_{0,3}$}
node[pos=0.45] {$\ss a_{1,3}$}
(-0.05,0) -- ++(0,-1);
\draw[arrow=0.45] (30:1) -- 
node[below,pos=0.45] {$\ss a_{3,2}$} 
node[above,pos=0.45] {$\ss a_{0,2}$} 
node[pos=0.45] {$\ss a_{1,2}$} 
(0.05,0) -- ++(0,-1);
\path (0,0) -- node[above] {$\ss a'_{0,2}\ a'_{0,3}$}
node {$\ss a'_{1,2}\ a'_{1,3}$}  ++(0,-1);
\draw decorate [thin,decoration={markings,mark = at position 0.25 with {\arrow[scale=2]{>}}}] { (0,0) -- ++(0,-1) };
\end{tikzpicture}
=
\delta_{a_{0,2},a'_{0,2}}
\delta_{a_{1,2},a'_{1,2}}
\delta_{a_{0,3},a'_{0,3}}
\delta_{a_{1,3},a'_{1,3}}
\delta_{a_{2,3},a_{3,2}}
t^{a_{1,3}a_{0,2}/2}\a_{a_{2,3}}
\end{equation}
One pairs each primed label with its corresponding unprimed label, and $a_{2,3}$ with $a_{3,2}$.
The only nontrivial feature is a power of $t^{1/2}$, a formal variable squaring to $t$.
The definition is extended to other cases by $\ZZ_4$-symmetry.

Finally, the ``up-pointing'' maps $V_{\alpha\,\alpha+1}\to V_{\alpha+1}\otimes V_{\alpha}$ are defined as follows, again choosing
$\alpha=2$:
\begin{equation}\label{eq:defUU}
\begin{tikzpicture}[baseline=-3pt,scale=1.2]
\draw[invarrow=0.45] (150:1) -- 
node[above,pos=0.45] {$\ss a_{0,3}$}
node[pos=0.45] {$\ss a_{1,3}$}
node[below,pos=0.45] {$\ss a_{2,3}$}
(-0.05,0) -- ++(0,-1);
\draw[invarrow=0.45] (30:1) -- 
node[above,pos=0.45] {$\ss a_{0,2}$} 
node[pos=0.45] {$\ss a_{1,2}$} 
node[below,pos=0.45] {$\ss a_{3,2}$} 
(0.05,0) -- ++(0,-1);
\path (0,0) -- node {$\ss a'_{0,3}\ a'_{0,2}$}
node[below] {$\ss a'_{1,3}\ a'_{1,2}$}  ++(0,-1);
\draw decorate [thin,decoration={markings,mark = at position 0.25 with {\arrow[scale=2]{<}}}] { (0,0) -- ++(0,-1) };
\end{tikzpicture}
=
\begin{cases}
t^{a'_{1,3}a'_{0,2}/2}
\a_{a'_{0,2}}\a_{a'_{0,3}}\a_{a'_{1,2}}\a_{a'_{1,3}}\a_{b}
&\text{if weight is conserved}
\\
\qquad u^{a'_{0,2},a'_{0,3},a_{3,2},a_{0,2},a_{0,3},b}u^{a'_{1,3},a'_{1,2},a_{2,3},a_{1,3},a_{1,2},b}\hspace{-1cm}
\\
0&\text{else}
\end{cases}
\end{equation}
where $u^{\ldots}$ is the terminating basic hypergeometric series defined in \eqref{eq:defy},
and $b=a_{3,2}+a_{0,2}-a'_{0,2}=a_{2,3}+a_{1,3}-a'_{1,3}$ (the latter equality coming from weight conservation).
Weight conservation also implies that the arguments of $u^{\ldots}$ satisfy the balance condition.

Once again, one can interpret all these maps as intertwiners for certain $\mathcal U_{t^{1/2}}(\mathfrak{sl}_4)$ parabolic Verma modules (namely, $V_A$ has highest weight $s\sum_{\alpha\in A}e_\alpha$) in the limit $t^s\to 0$.

\subsection{Double puzzles}
We are ready to introduce the main actor of the proof of associativity, which we call
{\em double puzzles}. They are obtained by gluing to the bottom side of a puzzle, another puzzle
upside down. We define, as in Section~\ref{sec:3d}, two versions corresponding to either
side of the associativity equation \eqref{eq:assoc}:
\begin{align}\label{eq:defLHS}
\mathcal L
&=
\begin{tikzpicture}[yscale=0.75,xscale=1.05,baseline=0]
\path (0,0.5) ++(135:1) coordinate (a);
\draw[invarrow=0.167,arrow=0.833] (a) node[left=-1mm,align=center] {$\ss {\color{myred}a_{2,3}}=m_{0}(\lambda)$\\[-3mm]$\ss {\color{mygreen}a_{1,3}}={\color{myblue}a_{0,3}}=0$} -- node[above] {$3$} (-0.05,0.5) -- node[left=-1mm,pos=0.25] {$23$}
node[left=-1mm,pos=0.75] {$01$} (-0.05,-0.5) -- node[below] {$0$} ++(225:1) node[left=-1mm,align=center] {$\ss {\color{myred}a_{3,0}}=m_{0}(\rho)$\\[-3mm]$\ss {\color{myblue}a_{2,0}}={\color{mygreen}a_{1,0}}=0$};
\path (0,0.5) ++(45:1) coordinate (b);
\draw[invarrow=0.167,arrow=0.833] (b) -- node[above] {$2$} (0.05,0.5) -- (0.05,-0.5) -- node[below] {$1$} ++(-45:1);
\draw decorate [thin,decoration={markings,mark = at position 0.6 with {\arrow[scale=2]{<}}}] { (0,-0.5) -- (0,0) };
\draw decorate [thin,decoration={markings,mark = at position 0.6 with {\arrow[scale=2]{<}}}] { (0,0.5) -- (0,0) };
\begin{scope}[shift={(0:5)}]
\path (0,0.5) ++(135:1) coordinate (a);
\draw[invarrow=0.167,arrow=0.833] (a) -- node[above] {$3$} (-0.05,0.5) -- node[left=-1mm,pos=0.25] {$23$}
node[left=-1mm,pos=0.75] {$01$} (-0.05,-0.5) -- node[below] {$0$} ++(225:1);
\path (0,0.5) ++(45:1) coordinate (b);
\draw[invarrow=0.167,arrow=0.833] (b) node[right=-1mm,align=center] {$\ss {\color{myred}a_{1,2}}=m_{n-1}(\mu)$\\[-3mm]$\ss {\color{myblue}a_{0,2}}={\color{mygreen}a_{3,2}}=0$} -- node[above] {$2$} (0.05,0.5) -- (0.05,-0.5) -- node[below] {$1$} ++(-45:1) node[right=-1mm,align=center] {$\ss {\color{myred}a_{0,1}}=m_{0}(\nu)$\\[-3mm]$\ss {\color{mygreen}a_{3,1}}={\color{myblue}a_{2,1}}=0$};
\draw decorate [thin,decoration={markings,mark = at position 0.6 with {\arrow[scale=2]{<}}}] { (0,-0.5) -- (0,0) };
\draw decorate [thin,decoration={markings,mark = at position 0.6 with {\arrow[scale=2]{<}}}] { (0,0.5) -- (0,0) };
\end{scope}
\node[rotate=45] at (60:2.25) {$\cdots$};
\path (60:2.25) ++(0:2.75) node[rotate=-45] {$\cdots$};
\begin{scope}[shift={(60:5)}]
\draw[arrow=0.75] (0.05,-0.5) -- node[right=-1mm] {$23$} ++(0,1) -- node[above] {$2$} ++(45:1) node[right=-1mm,align=center] {$\ss {\color{myred}a_{1,2}}=m_{0}(\mu)$\\[-3mm]$\ss {\color{myblue}a_{0,2}}={\color{mygreen}a_{3,2}}=0$};
\draw[invarrow=0.15,arrow=0.9] (0.05,-0.5) -- node[below] {$3$} ++(-45:1) -- node[left=-1mm] {$23$} ++(270:1) ++(0.05,0) coordinate(a) ++(0.05,0) -- ++(90:1) -- node[above] {$2$} ++(45:1) node[right=-1mm,align=center] {$\ss {\color{myred}a_{1,2}}=m_{1}(\mu)$\\[-3mm]$\ss {\color{myblue}a_{0,2}}={\color{mygreen}a_{3,2}}=0$};
\draw[arrow=0.75] (-0.05,-0.5) -- ++(0,1) -- node[above] {$3$} ++(135:1) node[left=-1mm,align=center] {$\ss {\color{myred}a_{2,3}}=m_{n-1}(\lambda)$\\[-3mm]$\ss {\color{mygreen}a_{1,3}}={\color{myblue}a_{0,3}}=0$};
\draw[invarrow=0.15,arrow=0.9] (-0.05,-0.5) -- node[below] {$2$} ++(225:1) -- node[right=-1mm] {$23$} ++(270:1) ++(-0.05,0) coordinate(b) ++(-0.05,0) -- ++(90:1) -- node[above] {$3$} ++(135:1) node[left=-1mm,align=center] {$\ss {\color{myred}a_{2,3}}=m_{n-2}(\lambda)$\\[-3mm]$\ss {\color{mygreen}a_{1,3}}={\color{myblue}a_{0,3}}=0$};
\draw decorate [thin,decoration={markings,mark = at position 0.6 with {\arrow[scale=2]{>}}}] { (0,-0.5) -- (0,0.5) };
\draw decorate [thin,decoration={markings,mark = at position 0.6 with {\arrow[scale=2]{>}}}] { (a) -- ++(0,1) };
\draw decorate [thin,decoration={markings,mark = at position 0.6 with {\arrow[scale=2]{>}}}] { (b) -- ++(0,1) };
\end{scope}
\node at (0:2.5) {$\cdots$};
\node[rotate=-45] at (-60:2.25) {$\cdots$};
\path (-60:2.25) ++(0:2.75) node[rotate=45] {$\cdots$};
\begin{scope}[shift={(-60:5)},scale=-1]
\draw[arrow=0.75] (0.05,-0.5) -- node[left=-1mm] {$01$} ++(0,1) -- node[below] {$0$} ++(45:1) node[left=-1mm,align=center] {$\ss {\color{myred}a_{3,0}}=m_{n-1}(\rho)$\\[-3mm]$\ss {\color{myblue}a_{2,0}}={\color{mygreen}a_{1,0}}=0$};
\draw[invarrow=0.15,arrow=0.9] (0.05,-0.5) -- node[above] {$1$} ++(-45:1) -- node[right=-1mm] {$01$} ++(270:1) ++(0.05,0) coordinate (a) ++(0.05,0) -- ++(90:1) -- node[below] {$0$} ++(45:1) node[left=-1mm,align=center] {$\ss {\color{myred}a_{3,0}}=m_{n-2}(\rho)$\\[-3mm]$\ss {\color{myblue}a_{2,0}}={\color{mygreen}a_{1,0}}=0$};
\draw[arrow=0.75] (-0.05,-0.5) -- ++(0,1) -- node[below] {$1$} ++(135:1) node[right=-1mm,align=center] {$\ss {\color{myred}a_{0,1}}=m_{n-1}(\nu)$\\[-3mm]$\ss {\color{mygreen}a_{3,1}}={\color{myblue}a_{2,1}}=0$};
\draw[invarrow=0.15,arrow=0.9] (-0.05,-0.5) -- node[above] {$0$} ++(225:1) -- node[left=-1mm] {$01$} ++(270:1) ++(-0.05,0) coordinate (b) ++(-0.05,0) -- ++(90:1) -- node[below] {$1$} ++(135:1) node[right=-1mm,align=center] {$\ss {\color{myred}a_{0,1}}=m_{n-2}(\nu)$\\[-3mm]$\ss {\color{mygreen}a_{3,1}}={\color{myblue}a_{2,1}}=0$};
\draw decorate [thin,decoration={markings,mark = at position 0.6 with {\arrow[scale=2]{>}}}] { (0,-0.5) -- (0,0.5) };
\draw decorate [thin,decoration={markings,mark = at position 0.6 with {\arrow[scale=2]{>}}}] { (a) -- ++(0,1) };
\draw decorate [thin,decoration={markings,mark = at position 0.6 with {\arrow[scale=2]{>}}}] { (b) -- ++(0,1) };
\end{scope}
\end{tikzpicture}
\\\label{eq:defRHS}
\mathcal R
&=
\begin{tikzpicture}[baseline=2.5cm,rotate=90,yscale=0.75,xscale=1.05]
\path (0,0.5) ++(135:1) coordinate (a);
\draw[invarrow=0.167,arrow=0.833] (a) node[left=-1mm,align=center] {$\ss {\color{myred}a_{3,0}}=m_{n-1}(\rho)$\\[-3mm]$\ss {\color{mygreen}a_{2,0}}={\color{myblue}a_{1,0}}=0$} -- node[left] {$0$} (-0.05,0.5) -- node[above,pos=0.25] {$30$}
node[below=-1mm,pos=0.75] {$12$} (-0.05,-0.5) -- node[right] {$1$} ++(225:1) node[right=-1mm,align=center] {$\ss {\color{myred}a_{0,1}}=m_{n-1}(\nu)$\\[-3mm]$\ss {\color{myblue}a_{3,1}}={\color{mygreen}a_{2,1}}=0$};
\path (0,0.5) ++(45:1) coordinate (b);
\draw[invarrow=0.167,arrow=0.833] (b) -- node[left] {$3$} (0.05,0.5) -- (0.05,-0.5) -- node[right] {$2$} ++(-45:1);
\draw decorate [thin,decoration={markings,mark = at position 0.6 with {\arrow[scale=2]{<}}}] { (0,-0.5) -- (0,0) };
\draw decorate [thin,decoration={markings,mark = at position 0.6 with {\arrow[scale=2]{<}}}] { (0,0.5) -- (0,0) };
\begin{scope}[shift={(0:5)}]
\path (0,0.5) ++(135:1) coordinate (a);
\draw[invarrow=0.167,arrow=0.833] (a) -- node[left] {$0$} (-0.05,0.5) -- node[above,pos=0.25] {$30$}
node[below=-1mm,pos=0.75] {$12$} (-0.05,-0.5) -- node[right] {$1$} ++(225:1);
\path (0,0.5) ++(45:1) coordinate (b);
\draw[invarrow=0.167,arrow=0.833] (b) node[left=-1mm,align=center] {$\ss {\color{myred}a_{2,3}}=m_{n-1}(\lambda)$\\[-3mm]$\ss {\color{myblue}a_{1,3}}={\color{mygreen}a_{0,3}}=0$} -- node[left] {$3$} (0.05,0.5) -- (0.05,-0.5) -- node[right] {$2$} ++(-45:1) node[right=-1mm,align=center] {$\ss {\color{myred}a_{1,2}}=m_{0}(\mu)$\\[-3mm]$\ss {\color{mygreen}a_{0,2}}={\color{myblue}a_{3,2}}=0$};
\draw decorate [thin,decoration={markings,mark = at position 0.6 with {\arrow[scale=2]{<}}}] { (0,-0.5) -- (0,0) };
\draw decorate [thin,decoration={markings,mark = at position 0.6 with {\arrow[scale=2]{<}}}] { (0,0.5) -- (0,0) };
\end{scope}
\node[rotate=-45] at (60:2.25) {$\cdots$};
\path (60:2.25) ++(0:2.75) node[rotate=45] {$\cdots$};
\begin{scope}[shift={(60:5)}]
\draw[arrow=0.75] (0.05,-0.5) -- node[above=-1mm] {$30$} ++(0,1) -- node[left] {$3$} ++(45:1) node[left=-1mm,align=center] {$\ss {\color{myred}a_{2,3}}=m_{0}(\lambda)$\\[-3mm]$\ss {\color{myblue}a_{1,3}}={\color{mygreen}a_{0,3}}=0$};
\draw[invarrow=0.15,arrow=0.9] (0.05,-0.5) -- node[right] {$0$} ++(-45:1) -- node[above] {$30$} ++(270:1) ++(0.05,0) coordinate(a) ++(0.05,0) -- ++(90:1) -- node[left] {$3$} ++(45:1) node[left=-1mm,align=center] {$\ss {\color{myred}a_{2,3}}=m_{1}(\lambda)$\\[-3mm]$\ss {\color{myblue}a_{1,3}}={\color{mygreen}a_{0,3}}=0$};
\draw[arrow=0.75] (-0.05,-0.5) -- ++(0,1) -- node[left] {$0$} ++(135:1) node[left=-1mm,align=center] {$\ss {\color{myred}a_{3,0}}=m_{0}(\rho)$\\[-3mm]$\ss {\color{mygreen}a_{2,0}}={\color{myblue}a_{1,0}}=0$};
\draw[invarrow=0.15,arrow=0.9] (-0.05,-0.5) -- node[right] {$3$} ++(225:1) -- node[above=-1mm] {$30$} ++(270:1) ++(-0.05,0) coordinate(b) ++(-0.05,0) -- ++(90:1) -- node[right] {$0$} ++(135:1) node[left=-1mm,align=center] {$\ss {\color{myred}a_{3,0}}=m_{1}(\rho)$\\[-3mm]$\ss {\color{mygreen}a_{2,0}}={\color{myblue}a_{1,0}}=0$};
\draw decorate [thin,decoration={markings,mark = at position 0.6 with {\arrow[scale=2]{>}}}] { (0,-0.5) -- (0,0.5) };
\draw decorate [thin,decoration={markings,mark = at position 0.6 with {\arrow[scale=2]{>}}}] { (a) -- ++(0,1) };
\draw decorate [thin,decoration={markings,mark = at position 0.6 with {\arrow[scale=2]{>}}}] { (b) -- ++(0,1) };
\end{scope}
\node at (0:2.5) {$\vdots$};
\node[rotate=45] at (-60:2.25) {$\cdots$};
\path (-60:2.25) ++(0:2.75) node[rotate=-45] {$\cdots$};
\begin{scope}[shift={(-60:5)},scale=-1]
\draw[arrow=0.75] (0.05,-0.5) -- node[above] {$12$} ++(0,1) -- node[right] {$1$} ++(45:1) node[right=-1mm,align=center] {$\ss {\color{myred}a_{0,1}}=m_{0}(\nu)$\\[-3mm]$\ss {\color{myblue}a_{3,1}}={\color{mygreen}a_{2,1}}=0$};
\draw[invarrow=0.15,arrow=0.9] (0.05,-0.5) -- node[left] {$2$} ++(-45:1) -- node[above=-1mm] {$12$} ++(270:1) ++(0.05,0) coordinate (a) ++(0.05,0) -- ++(90:1) -- node[right] {$1$} ++(45:1) node[right=-1mm,align=center] {$\ss {\color{myred}a_{0,1}}=m_{1}(\nu)$\\[-3mm]$\ss {\color{myblue}a_{3,1}}={\color{mygreen}a_{2,1}}=0$};
\draw[arrow=0.75] (-0.05,-0.5) -- ++(0,1) -- node[right] {$2$} ++(135:1) node[right=-1mm,align=center] {$\ss {\color{myred}a_{1,2}}=m_{n-1}(\mu)$\\[-3mm]$\ss {\color{mygreen}a_{0,2}}={\color{myblue}a_{3,2}}=0$};
\draw[invarrow=0.15,arrow=0.9] (-0.05,-0.5) -- node[left] {$1$} ++(225:1) -- node[above] {$12$} ++(270:1) ++(-0.05,0) coordinate (b) ++(-0.05,0) -- ++(90:1) -- node[right] {$2$} ++(135:1) node[right=-1mm,align=center] {$\ss {\color{myred}a_{1,2}}=m_{n-2}(\mu)$\\[-3mm]$\ss {\color{mygreen}a_{0,2}}={\color{myblue}a_{3,2}}=0$};
\draw decorate [thin,decoration={markings,mark = at position 0.6 with {\arrow[scale=2]{>}}}] { (0,-0.5) -- (0,0.5) };
\draw decorate [thin,decoration={markings,mark = at position 0.6 with {\arrow[scale=2]{>}}}] { (a) -- ++(0,1) };
\draw decorate [thin,decoration={markings,mark = at position 0.6 with {\arrow[scale=2]{>}}}] { (b) -- ++(0,1) };
\end{scope}
\end{tikzpicture}
\end{align}
We have also colored the labels to ease identification with the results of Section~\ref{sec:bosonic}.

\begin{prop}\label{prop:doublepuzzle}
One has:
\begin{align*}
\mathcal L&=h_\rho\sum_{\sigma\in \P_{k,n}} c^{\lambda,\mu}_\sigma c^{\sigma,\nu}_{\rho}
\\
\mathcal R&=h_\rho\sum_{\tau\in \P_{k,n}} c^{\lambda,\tau}_{\rho}c^{\mu,\nu}_\tau 
\end{align*}
\end{prop}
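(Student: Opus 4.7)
The plan is to cut each double puzzle along the chain of double-line edges separating its two constituent triangular puzzles, and identify each half with the honeycomb tensor diagram of Lemma~\ref{lem:c}.

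For $\mathcal{L}$, I would first insert a resolution of identity on every middle double-line edge via the duality pairing \eqref{eq:defdualb}. This replaces each such edge by a sum over labels $a_{\alpha,\beta}$ matched with $a_{\beta,\alpha}$ on the opposite side, weighted by the inverse $\varphi$-factors prescribed by \eqref{eq:defdualb}. By the weight-conservation constraints at the trivalent vertices bordering the cut, the labels on the double-line edges collectively encode a partition $\sigma\in\mathcal{P}_{k,n}$, and the summation over these labels becomes a summation over $\sigma$.

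Next, I would verify that each ``half'' of $\mathcal{L}$ becomes a tensor network of the same shape as \eqref{eq:c}: one bordered by $(\lambda,\mu,\sigma)$, the other by $(\sigma,\nu,\rho)$. The key identity to check is that the product $u^{\ldots}\cdot u^{\ldots}$ appearing in the trivalent vertex \eqref{eq:defUU} factorizes under the cut into one honeycomb $U$-entry of \eqref{eq:defU} for the upper half and one for the lower half; the trivalent vertex \eqref{eq:defDD} splits similarly into two honeycomb $D$-entries. The $\varphi$-prefactors in \eqref{eq:defUU}, \eqref{eq:defDD} and the inverse $\varphi$-factors from the duality pairing recombine --- with the $t^{1/2}$-powers canceling pairwise across the cut --- to reproduce exactly the $\a_{i''}\a_{j''}$ of \eqref{eq:defU} and the $\a_j$ of \eqref{eq:defD} for each half.

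Applying Lemma~\ref{lem:c} then identifies the two halves with $c^{\lambda,\mu}_\sigma$ and $c^{\sigma,\nu}_\rho$ respectively. The residual $\varphi$-factors not absorbed into either honeycomb expression are precisely those at the $\rho$-boundary of the lower puzzle, and their product is $h_\rho=\prod_r \a_{m_r(\rho)}$ by \eqref{eq:defh}. Summing over $\sigma$ yields the first identity, and the same argument --- after a $90^\circ$ rotation and replacing $\sigma$ by $\tau$ --- gives the second. The main obstacle will be the combinatorial bookkeeping of $\varphi$-factors and $t^{1/2}$-powers when cutting across the double-line edges: one must verify simultaneously that the $t^{1/2}$-powers in \eqref{eq:defDD} and \eqref{eq:defUU} cancel pairwise across the cut, that the $\varphi$-redistribution matches \eqref{eq:defU}, \eqref{eq:defD} on each side, and that the surviving factors assemble precisely into $h_\rho$ rather than some other combination. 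Though tedious, this bookkeeping is essentially dictated by weight conservation together with the dihedral symmetry of Lemma~\ref{lem:D6}.
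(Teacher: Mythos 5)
Your overall strategy --- cut along the middle double-line edges, identify each half with the tensor diagram of Lemma~\ref{lem:c}, and collect normalization factors --- is the right one, but two of your key steps misdescribe what actually happens. First, the trivalent vertices \eqref{eq:defUU}, \eqref{eq:defDD} do not straddle the cut: every such vertex lies entirely in one half of the double puzzle (the top half involves only edge types $2,3,23$, the bottom half only $0,1,01$), so there is no product $u\cdot u$ to ``factorize across the cut.'' What actually happens is that in the top half all labels carrying the index $0$ vanish --- a fact that must be proved by propagating the zero boundary labels through the diagram via weight conservation --- after which one of the two $u$-factors in \eqref{eq:defUU} trivializes (via $u^{0,0,a_{3,2},0,0,a_{3,2}}=\a_{a_{3,2}}^{-1}$) and the $t^{1/2}$-exponents vanish identically rather than ``canceling pairwise.'' Only the duality pairings \eqref{eq:defdualb} sit on the cut, and the residual factors they produce are $h_\sigma^{-1}$ (one factor $\a_{m_r(\sigma)}^{-1}$ per middle edge), not anything located at the $\rho$-boundary, whose labels are fixed external data carrying no $\varphi$-factors. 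You also need the global weight-conservation argument showing that the $a_{1,2}$-type labels on the cut vanish, so that the middle data really is a single partition $\sigma$.

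Second, and more seriously: the bottom half is an upside-down triangle whose ``output'' side, in the sense of Lemma~\ref{lem:c}, is the cut itself, not the $\rho$-boundary. Lemma~\ref{lem:c} therefore identifies it with $c^{\nu,\rho^*}_{\sigma^*}$, not with $c^{\sigma,\nu}_\rho$. Converting $\sum_\sigma c^{\lambda,\mu}_\sigma\, h_\sigma^{-1}\, c^{\nu,\rho^*}_{\sigma^*}$ into $h_\rho\sum_\sigma c^{\lambda,\mu}_\sigma\, c^{\sigma,\nu}_\rho$ requires the cyclic invariance of $c^{\lambda,\mu,\nu}$ together with its relation to $c^{\lambda,\mu}_{\nu^*}$, i.e.\ Proposition~\ref{prop:Z3}; that is where the factor $h_\rho$ actually comes from, and it is the step your proposal omits entirely. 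Without invoking the $\ZZ_3$-symmetry (or reproving it), the identification of the lower half with $c^{\sigma,\nu}_\rho$ and the appearance of $h_\rho$ are unjustified.
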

For more explicit diagrams in size $n=3$, see Appendix~\ref{app:assoc}.
\begin{proof}
We first analyze each half of the double puzzle $\mathcal L$. They have exactly the same structure
as the puzzle \eqref{eq:c} in Lemma~\ref{lem:c}, except that the labeling of the vertices is in principle more complicated, 
and the fugacities are not obviously the same. We proceed in steps.

We consider the top half of $\mathcal L$. Note that every label $a_{\alpha,\beta}$ on its NorthWest and
NorthEast boundaries for which $\alpha=0$ are zero ($\beta=0$ never arises). Therefore, applying weight conservation
at every vertex on these boundaries, we conclude that the labels on the other side of these vertices satisfy
the same property. By induction this is true throughout the top half of $\mathcal L$.

Denote $i=a_{1,2}$, $j=a_{3,2}$ for edges of type $2$,
$i'=a_{2,3}$, $j'=a_{1,3}$ for edges of type $3$, and $i''=a_{1,3}$, $j''=a_{1,2}$ for edges of type $23$.
This way the labelling becomes identical to the one of \eqref{eq:vertices}, and one easily checks that
the $\mathfrak{sl}_4$ weight conservation reduces to the $\mathfrak{sl}_3$ weight conservation, itself
equivalent to the balance condition of honeycombs.

Next we compare fugacities: this amounts to setting all labels involving the index $0$ to zero in the
definitions \eqref{eq:defUU} and \eqref{eq:defDD}, as well as the correspondence of notations
of the previous paragraph. One easily checks that they indeed reduce to the definitions \eqref{eq:defU} and
\eqref{eq:defD}, using $u^{0,0,a_{3,2},0,0,a_{3,2}}=\a_{a_{3,2}}^{-1}$ and Lemma~\ref{lem:D6}.

Now let us analyze what happens at the bottom of that top half. There is a series of $n$ edges of type $23$,
whose nonzero labels are $a_{1,2}$, $a_{1,3}$, which we denote
$j''_r$ and $i''_r$ respectively, $r=0,\ldots,n-1$.
By summing weight conservation at every vertex of the top half, we obtain the ``global'' conservation equation
\[
\sum_{r=0}^{n-1}m_r(\lambda)(e_2-e_3)+\sum_{r=0}^{n-1}m_r(\mu)(e_1-e_2)=
\sum_{r=0}^{n-1}(j''_r(e_1-e_2)+i''_r(e_1-e_3))
\]
Recalling that all our partitions satisfy $\sum_{r=0}^{n-1}m_r(\lambda)=k$, we 
have
\[
k(e_1-e_3)=\sum_{r=0}^{n-1}(j''_r(e_1-e_2)+i''_r(e_1-e_3))
\]
from which we immediately derive $j''_r=0$ for all $r$, as well as $\sum_{r=0}^{n-1} i''_r=k$. Therefore, we can
write $i''_r=m_r(\sigma)$ for some partition $\sigma$ in $\P_{k,n}$.

Comparing with \eqref{eq:c}, we conclude that at fixed labels $i''_r$ at the bottom, the top half
of $\mathcal L$ reproduces exactly the tensor entry of Lemma~\ref{lem:c} and is therefore equal to
$c^{\lambda,\mu}_\sigma$.

The exact same reasoning can be repeated for the bottom half, noting that it is obtained from the top half
by the following procedure: rotate 180 degrees, increase all indices by $2$ mod $4$, replace $\lambda$
with $\nu$, $\mu$ with $\rho^*$ and $\sigma$ with some as yet unknown other partition
$\bar\sigma$, defined by $\bar i''_r=m_r(\bar\sigma)$, $r=0,\ldots,n-1$,
where the $\bar i''_r$  are the $a_{3,1}$ labels at the top of the bottom half,
numbered from right to left. 
Therefore, the bottom half of $\mathcal L$ contributes $c^{\nu,\rho^*}_{\bar\sigma}$.

Finally, we need to perform the summation over the $i''_r$, i.e., over $\sigma\in \P_{k,n}$.
According to \eqref{eq:defdualb}, the contribution is nonzero only if $i''_r=\bar i''_{n-1-r}$, so that $\bar\sigma=\sigma^*$,
and equal to $\prod_{r=0}^{n-1} \a_{m_r(\sigma)}^{-1}=h_\sigma^{-1}$, cf \eqref{eq:defh}.

We conclude that
\begin{align*}
\mathcal L&=\sum_{\sigma\in \P_{k,n}} c^{\lambda,\mu}_\sigma h_\sigma^{-1} c^{\nu,\rho^*}_{\sigma^*}
\\
&=\sum_{\sigma\in \P_{k,n}} c^{\lambda,\mu}_\sigma c^{\nu,\rho^*,\sigma} 
&&\text{by Prop.~\ref{prop:Z3} (a)}
\\
&=\sum_{\sigma\in \P_{k,n}} c^{\lambda,\mu}_\sigma c^{\sigma,\nu,\rho^*} 
&&\text{by Prop.~\ref{prop:Z3} (b)}
\\
&=h_\rho\sum_{\sigma\in \P_{k,n}} c^{\lambda,\mu}_\sigma c^{\sigma,\nu}_{\rho}
&&\text{by Prop.~\ref{prop:Z3} (a)}
\end{align*}

We proceed identically with $\mathcal R$. In fact, $\mathcal R$ is obtained from $\mathcal L$ 
by increasing the numbering
of all spaces by $1$ mod $4$ and by shifting cyclically all labels by 90 degrees (and conventionally
rotating the diagram back 90 degrees), so we obtain in exactly the same way
\begin{align*}
\mathcal R&=\sum_{\tau\in \P_{k,n}} c^{\rho^*,\lambda}_{\tau^*} h_\tau^{-1} c^{\mu,\nu}_\tau 
\\
&=h_\rho\sum_{\tau\in \P_{k,n}} c^{\lambda,\tau}_{\rho} c^{\mu,\nu}_\tau &&\text{by $3\times$ Prop.~\ref{prop:Z3}}
\end{align*}
\end{proof}

In conclusion, in order to prove Prop.~\ref{prop:assoc} (associativity), all we need is to go from
\eqref{eq:defLHS} to \eqref{eq:defRHS} by means of three identities
corresponding in the dual picture to the tetrahedron, octahedron and dual tetrahedron moves of Section~\ref{sec:3d}.
We prove such identities now, in increasing order of complexity.
All our proofs have in common with that of Prop.~\ref{prop:doublepuzzle} that l.h.s.\ and r.h.s.\ are related
by the $\ZZ_4$ action generated by 
90 degree rotation and shifting the numbering of all spaces and labels by $1$.
This implies that we only
need to analyze say the l.h.s.\ and prove its $\ZZ_4$-invariance.

\subsection{The dual tetrahedron identity}
\begin{prop}\label{prop:dualtetra}
The following identity holds in $\bar V_0\otimes \bar V_1\otimes \bar V_2\otimes \bar V_3$:
\[
\begin{tikzpicture}[baseline=-3pt,rotate=90,scale=1.2]
\path (0,0.5) ++(135:1) coordinate (a);
\draw[arrow=0.167,invarrow=0.833] (a) -- node[below right] {$2$} (-0.05,0.5) -- node[below,pos=0.25] {$12$}
node[below,pos=0.75] {$30$} (-0.05,-0.5) -- node[below left] {$3$} ++(225:1);
\path (0,0.5) ++(45:1) coordinate (b);
\draw[arrow=0.167,invarrow=0.833] (b) -- node[above right] {$1$} (0.05,0.5) -- (0.05,-0.5) -- node[above left] {$0$} ++(-45:1);
\draw decorate [thin,decoration={markings,mark = at position 0.6 with {\arrow[scale=2]{>}}}] { (0,-0.5) -- (0,0) };
\draw decorate [thin,decoration={markings,mark = at position 0.6 with {\arrow[scale=2]{>}}}] { (0,0.5) -- (0,0) };
\end{tikzpicture}
=
\begin{tikzpicture}[baseline=-3pt,scale=1.2]
\path (0,0.5) ++(135:1) coordinate (a);
\draw[arrow=0.167,invarrow=0.833] (a) -- node[below left] {$1$} (-0.05,0.5) -- (-0.05,-0.5) -- node[above left] {$2$} ++(225:1);
\path (0,0.5) ++(45:1) coordinate (b);
\draw[arrow=0.167,invarrow=0.833] (b) -- node[below right] {$0$} (0.05,0.5) -- node[right,pos=0.25] {$01$}
node[right,pos=0.75] {$23$} (0.05,-0.5) -- node[above right] {$3$} ++(-45:1);
\draw decorate [thin,decoration={markings,mark = at position 0.6 with {\arrow[scale=2]{>}}}] { (0,-0.5) -- (0,0) };
\draw decorate [thin,decoration={markings,mark = at position 0.6 with {\arrow[scale=2]{>}}}] { (0,0.5) -- (0,0) };
\end{tikzpicture}
\]
\end{prop}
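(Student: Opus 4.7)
The overall strategy is to exploit the $\ZZ_4$-equivariance observed at the end of Section~\ref{sec:assoc}: the right-hand side is obtained from the left-hand side by a simultaneous $90^\circ$ rotation of the diagram and cyclic shift $\alpha \mapsto \alpha+1 \pmod 4$ of all vector-space labels. Consequently, proving the identity is equivalent to proving $\ZZ_4$-invariance of the LHS, and I would focus entirely on this.

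First I would expand the LHS into an explicit tensor expression by inserting a basis decomposition on each internal edge, in the same manner as in the proof of Lemma~\ref{lem:c}. The external data consist of twelve integer labels $a_{\beta,\alpha}$, three per outgoing edge of type $\alpha\in\{0,1,2,3\}$, while the only internal summation variables are the four labels carried by the middle $V_{12}/V_{30}$ double edge. The pairing \eqref{eq:defdual} forces those four labels to be shared between the top and bottom halves and contributes a product of $\varphi$-factors; the upper vertex contributes the fugacity prescribed by \eqref{eq:defUU}, namely a product of two terminating basic hypergeometric series $u^{\ldots}$ together with a $\varphi$-prefactor and a power of $t^{1/2}$, and the lower vertex contributes symmetrically.

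Next I would use weight conservation at each vertex to eliminate most of the four internal summations. After collecting all the $\varphi$-prefactors coming from \eqref{eq:defdual}, \eqref{eq:defDD} and \eqref{eq:defUU}, the result should assemble into an explicit product of $\varphi$-ratios, times a power of $t^{1/2}$, times a sum of products of two $u^{\ldots}$ series indexed by the residual internal variables.

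The last step is to verify $\ZZ_4$-invariance of this combined expression. The $\ZZ_3$-part of the invariance is immediate from Lemma~\ref{lem:D6} applied independently to each of the two $u^{\ldots}$ factors. The remaining $\ZZ_2$-part, which exchanges the internal $(12/30)$ double edge with the $(01/23)$ one, is the substantive content of the proposition, and I expect it to reduce to a terminating $_2\phi_1$ or $_3\phi_1$ transformation formula, in the same spirit as the Heine-type identities invoked in the proof of Lemma~\ref{lem:D6}. The main obstacle I anticipate is the bookkeeping of the asymmetric $t^{a_{1,3}a_{0,2}/2}$-style exponents introduced in \eqref{eq:defDD}--\eqref{eq:defUU}: these individually break cyclic symmetry, and showing that they reassemble, together with the $t^{1/2}$-exponents coming from the pairings, into a manifestly $\ZZ_4$-symmetric quadratic form in the external labels is likely to be the most delicate part of the argument.
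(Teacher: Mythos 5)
Your opening move is the correct one and is exactly the paper's: the r.h.s.\ is the image of the l.h.s.\ under the simultaneous $90$-degree rotation and index shift $\alpha\mapsto\alpha+1\pmod 4$, so it suffices to write the l.h.s.\ out explicitly and check that it is $\ZZ_4$-invariant. But you have misread which vertices occur in this particular diagram, and as a consequence all of the analytic machinery you set up is aimed at the wrong move. In the dual tetrahedron the four external edges all point \emph{into} the two trivalent vertices and the internal double edge points out of them towards the pairing; both trivalent vertices are therefore of the down-pointing type $V_\alpha\otimes V_{\alpha+1}\to V_{\alpha\,\alpha+1}$, governed by \eqref{eq:defDD}, not by \eqref{eq:defUU}. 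No factor $u^{\ldots}$ appears anywhere. Since \eqref{eq:defDD} and the duality pairing are pure products of Kronecker deltas times $\varphi$-factors, every internal label is forced to coincide with an external one, there is no residual summation whatsoever, and the entry is nonzero precisely when the twelve external labels match up into six pairs $a_{\alpha,\beta}=a_{\beta,\alpha}$ of opposite weight, in which case it equals $\a_{a_{0,1}}\a_{a_{0,2}}\a_{a_{1,2}}\a_{a_{0,3}}\a_{a_{1,3}}\a_{a_{2,3}}$. This is manifestly $\ZZ_4$-invariant and the proof is finished: no Heine-type or ${}_3\phi_1$ transformation is needed, and the ``delicate bookkeeping'' of the $t^{1/2}$-exponents you anticipate is vacuous here, since whatever exponent survives the deltas is a symmetric function of the paired labels.

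The difficulties you describe --- a genuine internal summation, products of $u^{\ldots}$ series, and the need for a hypergeometric transformation to establish the residual symmetry --- are real, but they belong to the \emph{tetrahedron} identity, Prop.~\ref{prop:tetra}, whose diagram is the orientation reversal of this one and is built from two $U$-vertices of type \eqref{eq:defUU}. That is the only one of the three excavation moves in which the internal labels are not determined by the external ones, and even there the paper does not find a single classical transformation formula but instead uses the recurrences \eqref{eq:recur}--\eqref{eq:recur2} and a reduction to a degenerate special case. As written, your plan stalls at the expansion step, because \eqref{eq:defUU} does not apply to either vertex of the dual tetrahedron; had you read the vertices off correctly, the whole argument would collapse to the two-line verification above.
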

\begin{proof}
We write out an entry of the l.h.s.\ explicitly:
\[
\begin{tikzpicture}[baseline=-3pt,rotate=90,scale=1.2]
\path (0,0.5) ++(135:1) coordinate (a);
\draw[arrow=0.3,invarrow=0.7] (a) -- node[pos=0.45,above] {$\ss a_{0,2}$} node[pos=0.45] {$\ss a_{1,2}$} node[pos=0.45,below] {$\ss a_{3,2}$} (-0.05,0.5) -- node[pos=0.25,below=-0.5mm] {$\ss a'_{02}$} node[pos=0.25,below=2.5mm] {$\ss a'_{32}$}
node[below=-0.5mm,pos=0.75] {$\ss a'_{13}$} node[below=2.5mm,pos=0.75] {$\ss a'_{23}$} (-0.05,-0.5) -- node[pos=0.65,above] {$\ss a_{0,3}$} node[pos=0.65] {$\ss a_{1,3}$} node[pos=0.65,below] {$\ss a_{2,3}$} ++(225:1);
\path (0,0.5) ++(45:1) coordinate (b);
\draw[arrow=0.3,invarrow=0.7] (b) -- node[pos=0.45,above] {$\ss a_{0,1}$} node[pos=0.45] {$\ss a_{2,1}$} node[pos=0.45,below] {$\ss a_{3,1}$} (0.05,0.5) -- node[pos=0.25,above=2.5mm] {$\ss a'_{01}$} node[pos=0.25,above=-0.5mm] {$\ss a'_{31}$} node[pos=0.75,above=2.5mm] {$\ss a'_{10}$} node[pos=0.75,above=-0.5mm] {$\ss a'_{20}$} (0.05,-0.5) -- node[above,pos=0.65] {$\ss a_{1,0}$} node[pos=0.65] {$\ss a_{2,0}$} node[pos=0.65,below] {$\ss a_{3,0}$} ++(-45:1);
\draw decorate [thin,decoration={markings,mark = at position 0.6 with {\arrow[scale=2]{>}}}] { (0,-0.5) -- (0,0) };
\draw decorate [thin,decoration={markings,mark = at position 0.6 with {\arrow[scale=2]{>}}}] { (0,0.5) -- (0,0) };
\end{tikzpicture}
\]
According to \eqref{eq:defdual} and \eqref{eq:defDD}, this entry is nonzero only if
\begin{align*}
a_{0,1}&=a'_{0,1}=a'_{1,0}=a_{1,0}
\\
a_{3,1}&=a'_{3,1}=a'_{1,3}=a_{1,3}
\\
a_{0,2}&=a'_{0,2}=a'_{2,0}=a_{2,0}
\\
a_{3,2}&=a'_{3,2}=a'_{2,3}=a_{2,3}
\\
a_{2,1}&=a_{1,2}
\\
a_{3,0}&=a_{0,3}
\end{align*}
in which case it is equal to
\[
\a_{a_{0,1}}\a_{a_{0,2}}\a_{a_{1,2}}\a_{a_{0,3}}\a_{a_{1,3}}\a_{a_{2,3}}
\]
The interpretation is obvious: the 12 external labels come in pairs of opposite weight, and they should be made equal, 
in which case the fugacity is the product over each pair $a_{\alpha,\beta}=a_{\beta,\alpha}$ of $\a_{a_{\alpha,\beta}}$. In particular this expression
is manifestly $\ZZ_4$-invariant.
\end{proof}

\subsection{The octahedron identity}
\begin{prop}\label{prop:octa}
The following identity holds in $V_{30}\otimes V_{23}\otimes V_{12}\otimes V_{01}$:
\[
\begin{tikzpicture}[baseline=-3pt,scale=1.2]
\draw[arrow=0.5] (-2,-0.05) -- (-1,-0.05) -- node[below left] {$1$} (-0.05,-1) -- node[left] {$01$} (-0.05,-2);
\draw[arrow=0.5] (2,0.05) -- node[above,pos=0.25] {$12$} node[above,pos=0.75] {$30$} (1,0.05) -- node[above right] {$3$} (0.05,1) -- node[right] {$23$} (0.05,2);
\draw[arrow=0.5] (-2,0.05) -- node[above,pos=0.25] {$30$} node[above,pos=0.75] {$12$} (-1,0.05) -- node[above left] {$2$} (-0.05,1) -- (-0.05,2);
\draw[arrow=0.5] (2,-0.05) -- (1,-0.05) -- node[below right] {$0$} (0.05,-1) -- (0.05,-2);
\draw decorate [thin,decoration={markings,mark = at position 0.25 with {\arrow[scale=2]{<}},mark = at position 0.75 with {\arrow[scale=2]{>}}}] { (-2,0) -- (-1,0) };
\draw decorate [thin,decoration={markings,mark = at position 0.25 with {\arrow[scale=2]{<}},mark = at position 0.75 with {\arrow[scale=2]{>}}}] { (2,0) -- (1,0) };
\draw decorate [thin,decoration={markings,mark = at position 0.5 with {\arrow[scale=2]{<}}}] { (0,-2) -- (0,-1) };
\draw decorate [thin,decoration={markings,mark = at position 0.5 with {\arrow[scale=2]{<}}}] { (0,2) -- (0,1) };
\end{tikzpicture}
=
\begin{tikzpicture}[baseline=-3pt,scale=1.2]
\draw[invarrow=0.5] (-2,-0.05) --(-1,-0.05) -- node[below left] {$3$} (-0.05,-1) -- node[left,pos=0.25] {$23$} node[left,pos=0.75] {$01$} (-0.05,-2);
\draw[invarrow=0.5] (2,0.05) -- node[above] {$12$} (1,0.05) -- node[above right] {$1$} (0.05,1) -- node[right,pos=0.25] {$01$} node[right,pos=0.75] {$23$} (0.05,2);
\draw[invarrow=0.5] (-2,0.05) -- node[above] {$30$} (-1,0.05) -- node[above left] {$0$} (-0.05,1) -- (-0.05,2);
\draw[invarrow=0.5] (2,-0.05) -- (1,-0.05) -- node[below right] {$2$} (0.05,-1) -- (0.05,-2);
\draw decorate [thin,decoration={markings,mark = at position 0.5 with {\arrow[scale=2]{<}}}] { (-2,0) -- (-1,0) };
\draw decorate [thin,decoration={markings,mark = at position 0.5 with {\arrow[scale=2]{<}}}] { (2,0) -- (1,0) };
\draw decorate [thin,decoration={markings,mark = at position 0.25 with {\arrow[scale=2]{<}},mark = at position 0.75 with {\arrow[scale=2]{>}}}] { (0,-2) -- (0,-1) };
\draw decorate [thin,decoration={markings,mark = at position 0.25 with {\arrow[scale=2]{<}},mark = at position 0.75 with {\arrow[scale=2]{>}}}] { (0,2) -- (0,1) };
\end{tikzpicture}
\]
\end{prop}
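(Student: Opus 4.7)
The plan is to compute both sides as tensor entries in $V_{30}\otimes V_{23}\otimes V_{12}\otimes V_{01}$ and match them coefficient by coefficient. I would fix the twelve external labels sitting on the four outer double lines, then expand each side as a sum over internal labels using \eqref{eq:defdual}, \eqref{eq:defdualb}, \eqref{eq:defDD} and \eqref{eq:defUU}. The $\ZZ_4$-symmetry built into all fugacities (simultaneous $90^\circ$ rotation of the diagram together with the cyclic shift $\alpha\mapsto\alpha+1\pmod{4}$ of all labels) sends the LHS to the RHS, so it suffices to establish that on each side the resulting function of external labels coincides with a single explicit $\ZZ_4$-invariant expression; I would work out the LHS directly and then invoke $\ZZ_4$ to conclude.

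On the LHS, the four horizontal co-pairings \eqref{eq:defdualb} match the labels at the two ends of each central double line, while the Kronecker-delta conditions at each of the four down vertices \eqref{eq:defDD} further identify the double-line labels with the corresponding single-line labels. Combined with weight conservation, this forces every internal label to be determined by the external ones, so the LHS collapses to a monomial in $t^{1/2}$ and the $\a_i$: a product of four down-vertex contributions (a $t^{1/2}$-power and one $\a_i$ each) together with four $\a_i^{-1}$-factors per co-pairing. The analogous analysis of the RHS is more involved: the up-vertex fugacity \eqref{eq:defUU} imposes no Kronecker deltas and instead contributes two basic hypergeometric series $u^{\ldots}$ per vertex, so several internal labels survive as genuine summation variables, constrained only by weight conservation at each vertex and by the vertical pairings \eqref{eq:defdual}. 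Counting degrees of freedom, I expect the RHS to reduce to a short sum of products of terminating ${}_2\phi_1$'s indexed by a small number of effective free parameters.

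The heart of the argument is then the $q$-series identity equating the LHS monomial to this hypergeometric sum. After applying the dihedral symmetries of $u^{\ldots}$ from Lemma~\ref{lem:D6} to bring each ${}_2\phi_1$ into a common normalization, I expect the identity to reduce to a standard transformation or summation in \cite{GR-basic}, most plausibly a three-term ${}_3\phi_2$ transformation or a Jackson-type terminating summation. The main obstacle will be organizing the bookkeeping: tracking how the delicate $t^{1/2}$-exponents in \eqref{eq:defDD} and \eqref{eq:defUU} combine with the $\a_i$-factors so that the hypergeometric identity appears in a recognizable standard form. Two useful sanity checks guide the normalization: at $t=0$ every $u^{\ldots}$ collapses to $1$ and the identity must reduce to the classical octahedron recurrence of \cite{KTW-octa} for honeycomb counts; and the enforced $\ZZ_4$-symmetry of both sides acts as a rigid balancing condition, dictating the correct distribution of $t^{1/2}$-powers across the eight triangular vertices.
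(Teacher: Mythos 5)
Your top-level strategy --- compute the left-hand side explicitly as a function of the external labels, show the result is $\ZZ_4$-invariant, and deduce the right-hand side by the $90^\circ$-rotation-plus-index-shift symmetry --- is exactly the one the paper uses for all three excavation moves. But your description of what each side of the octahedron identity actually looks like is wrong, and the ``heart of the argument'' you anticipate (a Jackson-type terminating summation or a ${}_3\phi_2$ transformation from \cite{GR-basic}) does not occur. Each side contains \emph{two} down-pointing vertices and \emph{two} up-pointing vertices --- on the left-hand side the North and South vertices are of type \eqref{eq:defDD} while the West and East vertices are of type \eqref{eq:defUU} --- and there are two (not four) horizontal co-pairings. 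Consequently the left-hand side is not a monomial in $t^{1/2}$ and the $\a_i$: it carries four factors $u^{\ldots}$, two from each of the West and East up-pointing vertices. Symmetrically, the right-hand side has no surviving summation variables: the Kronecker deltas from \eqref{eq:defdual}, \eqref{eq:defdualb} and \eqref{eq:defDD}, combined with weight conservation at the up-pointing vertices, fix \emph{all} internal labels in terms of the external ones. The paper stresses that this rigidity is precisely what distinguishes the octahedron and dual-tetrahedron moves from the tetrahedron move of Prop.~\ref{prop:tetra}, which is the only one requiring a genuine sum over an internal label. So there is no ``monomial $=$ hypergeometric sum'' identity to prove here, and the $q$-series machinery you plan to deploy has nothing to act on.

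What the proof actually requires, and what is missing from your proposal, is: (a) solving the linear system of weight-conservation equations to obtain the four $\ZZ_4$-invariant constraints on the external labels together with the explicit values of all internal labels; and (b) invoking the dihedral $D_6$ symmetry of $u^{\ldots}$ (Lemma~\ref{lem:D6}) to rewrite the product of the four $u$-factors --- together with the $t^{1/2}$-powers from \eqref{eq:defDD} and \eqref{eq:defUU} and the $\a$'s surviving after cancellation against the co-pairing factors --- in a manifestly $\ZZ_4$-invariant form. Lemma~\ref{lem:D6} enters not to normalize ${}_2\phi_1$'s prior to a summation formula but to permute the six arguments of each $u$ so that the four factors visibly form a single $\ZZ_4$-orbit. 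If you carried out your plan as written, you would discover at the bookkeeping stage that neither side has the shape you assumed, and the reduction to a standard identity in \cite{GR-basic} would never materialize.
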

\begin{proof}
Again, we look at the l.h.s.:
\[
\begin{tikzpicture}[baseline=-3pt,scale=1.2]
\draw[arrow=0.35] (-2,-0.05) -- node[pos=0.25,below=-0.5mm] {$\ss a^W_{1,0}$} node[pos=0.25,below=3mm] {$\ss a^W_{1,3}$} node[pos=0.75,below=-0.5mm] {$\ss a^W_{0,1}$} node[pos=0.75,below=3mm] {$\ss a^W_{3,1}$} (-1,-0.05) -- node[above] {$\ss a_{0,1}$} node {$\ss a_{2,1}$} node[below] {$\ss a_{3,1}$} (-0.05,-1) -- (-0.05,-2);
\draw[arrow=0.35] (2,0.05) -- node[pos=0.25,above=-0.5mm] {$\ss a^E_{3,2}$} node[pos=0.25,above=3mm] {$\ss a^E_{3,1}$} node[pos=0.75,above=-0.5mm] {$\ss a^E_{2,3}$} node[pos=0.75,above=3mm] {$\ss a^E_{1,3}$} (1,0.05) -- node[above] {$\ss a_{0,3}$} node {$\ss a_{1,3}$} node[below] {$\ss a_{2,3}$} (0.05,1) -- (0.05,2);
\draw[arrow=0.35] (-2,0.05) --  node[pos=0.25,above=-0.5mm] {$\ss a^W_{2,3}$} node[pos=0.25,above=3mm] {$\ss a^W_{2,0}$} node[pos=0.75,above=-0.5mm] {$\ss a^W_{3,2}$} node[pos=0.75,above=3mm] {$\ss a^W_{0,2}$} (-1,0.05) -- node[above] {$\ss a_{0,2}$} node {$\ss a_{1,2}$} node[below] {$\ss a_{3,2}$} (-0.05,1) -- (-0.05,2);
\draw[arrow=0.35] (2,-0.05) -- node[pos=0.25,below=-0.5mm] {$\ss a^E_{0,1}$} node[pos=0.25,below=3mm] {$\ss a^E_{0,2}$} node[pos=0.75,below=-0.5mm] {$\ss a^E_{2,0}$} node[pos=0.75,below=3mm] {$\ss a^E_{1,0}$} (1,-0.05) -- node[above] {$\ss a_{1,0}$} node {$\ss a_{2,0}$} node[below] {$\ss a_{3,0}$} (0.05,-1) -- (0.05,-2);
\draw decorate [thin,decoration={markings,mark = at position 0.25 with {\arrow[scale=2]{<}},mark = at position 0.75 with {\arrow[scale=2]{>}}}] { (-2,0) -- (-1,0) };
\draw decorate [thin,decoration={markings,mark = at position 0.25 with {\arrow[scale=2]{<}},mark = at position 0.75 with {\arrow[scale=2]{>}}}] { (2,0) -- (1,0) };
\path decorate [thin,decoration={markings,mark = at position 0.75 with {\arrow[scale=2]{<}}}] { (0,-2) -- node {$\ss a^S_{2,1}\ a^S_{2,0}$} node[below=0.5mm] {$\ss a^S_{3,1}\ a^S_{3,0}$} (0,-1) };
\draw decorate [thin,decoration={markings,mark = at position 0.75 with {\arrow[scale=2]{<}}}] { (0,2) -- node {$\ss a^N_{1,2}\ a^N_{1,3}$} node[above=0.5mm] {$\ss a^N_{0,2}\ a^N_{0,3}$} (0,1) };
\end{tikzpicture}
\]
In view of \eqref{eq:defdual} and \eqref{eq:defDD} (the latter being relevant to
South and North vertices), we have the following equalities
\begin{align*}
a_{2,0}^W&=a_{0,2}^W
&
a_{2,3}^W&=a_{3,2}^W
&
a_{1,0}^W&=a_{0,1}^W
&
a_{1,3}^W&=a_{3,1}^W
\\
a_{2,0}^E&=a_{0,2}^E
&
a_{2,3}^E&=a_{3,2}^E
&
a_{1,0}^E&=a_{0,1}^E
&
a_{1,3}^E&=a_{3,1}^E
\\
a_{0,2}^N&=a_{0,2}
&
a_{1,2}^N&=a_{1,2}
&
a_{0,3}^N&=a_{0,3}
&
a_{1,3}^N&=a_{1,3}
\\
a_{3,1}^S&=a_{3,1}
&
a_{2,0}^S&=a_{2,0}
&
a_{2,1}^S&=a_{2,1}
&
a_{3,0}^S&=a_{3,0}
\\
&&
a_{3,2}&=a_{2,3}
&
a_{0,1}&=a_{1,0}
\end{align*}

Weight conservation at the West and East vertices gives six more equalities:
\begin{align*}
a_{0,2}^W + a_{0,1}^W &= a_{0,2} + a_{01}\\
-a_{0,1}^W - a_{3,1}^W &=- a_{0,1} - a_{2,1} - a_{3,1} + a_{1,2}\\
-a_{0,2}^W - a_{3,2}^W &= -a_{0,2} - a_{1,2} - a_{3,2} + a_{2,1}\\
-a_{1,0}^E - a_{2,0}^E &= -a_{1,0} - a_{2,0} - a_{3,0} + a_{0,3}\\
a_{1,3}^E + a_{1,0}^E &= a_{1,3} + a_{1,0}\\
a_{2,3}^E + a_{2,0}^E &= a_{2,0} + a_{2,3}
\end{align*}
Solving them gives four constraints for the external labels:
\begin{align*}
a_{0,2}^E+a_{0,3}^N+a_{1,3}^N&=a_{3,1}^E+a_{2,0}^S+a_{3,0}^S
\\
a_{1,3}^N+a_{1,0}^W+a_{2,0}^W&=a_{0,2}^N+a_{3,1}^E+a_{0,1}^E
\\
a_{2,0}^W+a_{2,1}^S+a_{3,1}^S&=a_{1,3}^W+a_{0,2}^N+a_{1,2}^N
\\
  a_{3,1}^S+a_{3,2}^E+a_{0,2}^E&=a_{2,0}^S+a_{1,3}^W+a_{2,3}^W
\end{align*}
which are manifestly $\ZZ_4$-invariant,
as well as fixes all the internal ones; the nontrivial ones are
\begin{align*}
a_{0,1}&=a_{1,0}=a_{1,0}^W+a_{2,0}^W-a_{0,2}^N
\\
a_{2,3}&=a_{3,2}=a_{3,2}^E+a_{0,2}^E-a_{2,0}^S
\end{align*}

Finally, the associated nonzero entry is
\begin{gather*}
(\a_{a_{2,0}^W}\a_{a_{2,3}^W}\a_{a_{1,0}^W}\a_{a_{1,3}^W}\a_{a_{3,1}^E}\a_{a_{3,2}^E}\a_{a_{0,1}^E}\a_{a_{0,2}^E}
)^{-1}
\\
t^{(a_{0,2}^N a_{1,3}^N+a_{2,0}^W a_{1,3}^W+a_{2,0}^S a_{3,1}^S+a_{3,1}^E a_{0,2}^E)/2}
\a_{a_{0,1}}\a_{a_{2,3}}
\\
\a_{a_{2,0}^W}\a_{a_{2,3}^W}\a_{a_{1,0}^W}\a_{a_{1,3}^W}
\a_{a_{2,1}^S+a_{3,1}^S-a_{1,3}^W}
\\
\a_{a_{3,1}^E}\a_{a_{3,2}^E}\a_{a_{0,1}^E}\a_{a_{0,2}^E}
\a_{a_{0,3}^N+a_{1,3}^N-a_{3,1}^E}
\\
u^{a_{3,1}^W,a_{3,2}^W,a_{2,1}^S,a_{3,1}^S,a_{3,2},a_{2,1}^S+a_{3,1}^S-a_{3,1}^W}
u^{a_{0,2}^W,a_{0,1}^W,a_{1,2}^N,a_{0,2}^N,a_{0,1},a_{2,1}^S+a_{3,1}^S-a_{3,1}^W}
\\
u^{a_{1,3}^E,a_{1,0}^E,a_{0,3}^N,a_{1,3}^N,a_{1,0},a_{0,3}^N+a_{1,3}^N-a_{1,3}^E}
u^{a_{2,0}^E,a_{2,3}^E,a_{3,0}^S,a_{2,0}^S,a_{2,3},a_{0,3}^N+a_{1,3}^N-a_{1,3}^E}
\end{gather*}
After simplifying, substituting $a_{0,1}=a_{1,0}$ and $a_{2,3}=a_{3,2}$ with their values, and applying
Lemma~\ref{lem:D6}, we obtain the $\ZZ_4$-invariant expression
\begin{gather*}
t^{(a_{0,2}^N a_{1,3}^N+a_{1,3}^W a_{2,0}^W+a_{2,0}^S a_{3,1}^S+a_{3,1}^E a_{0,2}^E)/2}
\\
\a_{a_{0,3}^N+a_{1,3}^N-a_{3,1}^E}
\a_{a_{1,0}^W+a_{2,0}^W-a_{0,2}^N}
\a_{a_{2,1}^S+a_{3,1}^S-a_{1,3}^W}
\a_{a_{3,2}^E+a_{0,2}^E-a_{2,0}^S}
\\
u^{a_{1,3}^W,a_{2,3}^W,a_{2,1}^S,a_{3,1}^S,a_{3,2}^E+a_{0,2}^E-a_{2,0}^S,a_{2,1}^S+a_{3,1}^S-a_{1,3}^W}
\\
u^{a_{2,0}^S,a_{3,0}^S,a_{3,2}^E,a_{0,2}^E,a_{0,3}^N+a_{1,3}^N-a_{3,1}^E,a_{3,2}^E+a_{0,2}^E-a_{2,0}^S}
\\
u^{a_{3,1}^E,a_{0,1}^E,a_{0,3}^N,a_{1,3}^N,a_{1,0}^W+a_{2,0}^W-a_{0,2}^N,a_{0,3}^N+a_{1,3}^N-a_{3,1}^E}
\\
u^{a_{0,2}^N,a_{1,2}^N,a_{1,0}^W,a_{2,0}^W,a_{2,1}^S+a_{3,1}^S-a_{1,3}^W,a_{1,0}^W+a_{2,0}^W-a_{0,2}^N}
\end{gather*}
which concludes the proof.
\end{proof}

\subsection{The tetrahedron identity}
\begin{prop}\label{prop:tetra}
The following identity holds in $V_3\otimes V_2\otimes V_1\otimes V_0$:
\[
\begin{tikzpicture}[baseline=-3pt,scale=1.2]
\path (0,0.5) ++(135:1) coordinate (a);
\draw[invarrow=0.167,arrow=0.833] (a) -- node[below left] {$3$} (-0.05,0.5) -- node[left,pos=0.25] {$23$}
node[left,pos=0.75] {$01$} (-0.05,-0.5) -- node[above left] {$0$} ++(225:1);
\path (0,0.5) ++(45:1) coordinate (b);
\draw[invarrow=0.167,arrow=0.833] (b) -- node[below right] {$2$} (0.05,0.5) -- (0.05,-0.5) -- node[above right] {$1$} ++(-45:1);
\draw decorate [thin,decoration={markings,mark = at position 0.6 with {\arrow[scale=2]{<}}}] { (0,-0.5) -- (0,0) };
\draw decorate [thin,decoration={markings,mark = at position 0.6 with {\arrow[scale=2]{<}}}] { (0,0.5) -- (0,0) };
\end{tikzpicture}
=
\begin{tikzpicture}[baseline=-3pt,rotate=90,scale=1.2]
\path (0,0.5) ++(135:1) coordinate (a);
\draw[invarrow=0.167,arrow=0.833] (a) -- node[below right] {$0$} (-0.05,0.5) -- node[below,pos=0.25] {$30$}
node[below,pos=0.75] {$12$} (-0.05,-0.5) -- node[below left] {$1$} ++(225:1);
\path (0,0.5) ++(45:1) coordinate (b);
\draw[invarrow=0.167,arrow=0.833] (b) -- node[above right] {$3$} (0.05,0.5) -- (0.05,-0.5) -- node[above left] {$2$} ++(-45:1);
\draw decorate [thin,decoration={markings,mark = at position 0.6 with {\arrow[scale=2]{<}}}] { (0,-0.5) -- (0,0) };
\draw decorate [thin,decoration={markings,mark = at position 0.6 with {\arrow[scale=2]{<}}}] { (0,0.5) -- (0,0) };
\end{tikzpicture}
\]
\end{prop}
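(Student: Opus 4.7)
The plan is to follow the template established by the proofs of Propositions~\ref{prop:dualtetra} and~\ref{prop:octa}: invoke the observation made before Proposition~\ref{prop:dualtetra} that the l.h.s.\ and r.h.s.\ are related by the $\ZZ_4$ action (90 degree rotation plus a shift of all subspace indices by $1$ mod $4$), so that it suffices to write out the l.h.s.\ entry-by-entry and verify that the resulting expression is $\ZZ_4$-invariant.

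First I would label all twelve external legs (three on each of the four exterior edges of types $3,2,1,0$) and the four internal labels on the $V_{23}/V_{01}$ double line. The top vertex $V_{23}\to V_3\otimes V_2$ and the bottom vertex $V_{01}\to V_1\otimes V_0$ each contribute, via \eqref{eq:defUU}, a product $\a_{\cdots}\a_{\cdots}\a_{\cdots}\a_{\cdots}\a_b\, u^{\cdots}u^{\cdots}$ together with a power of $t^{1/2}$, subject to weight conservation at that vertex. The middle double line, with arrows pointing outward, is the duality element analogous to \eqref{eq:defdualb}; it forces matching $a_{\alpha,\beta}=a_{\beta,\alpha}$ (for $\alpha\in\{2,3\}$, $\beta\in\{0,1\}$) of the four labels on $V_{23}$ with those on $V_{01}$, and contributes $\prod \a^{-1}$.

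Next I would combine weight conservation at the two $U$ vertices with the pairing constraints to determine which labels survive. The $\a^{-1}$ factors from the pairing will cancel against the outermost $\a$'s of the two $U$ fugacities, leaving a sum over a single residual internal parameter with summand equal to a product of four $u^{\cdots}$ factors (two from each $U$ vertex), together with explicit $\a$'s and a $t^{1/2}$-power involving only external labels. The goal is to show that this sum collapses to a cyclically symmetric expression in the four external edges, whose form is invariant under shifting all indices by $1$ mod $4$ and rotating $90$ degrees. Lemma~\ref{lem:D6} is the tool of choice for reshuffling arguments of each $u^{\cdots}$ into a $\ZZ_4$-symmetric arrangement.

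The main obstacle is the hypergeometric step: unlike the dual tetrahedron (no summation) and unlike the octahedron (where the combinatorics of the equator forces $\ZZ_4$-symmetry essentially for free), here the nontrivial content is a genuine summation identity for a product of two $_2\phi_1$'s contracted along one parameter, which should rearrange into the cyclically symmetric product of $u^{\cdots}$ factors that makes manifest the $\ZZ_4$-invariance. I expect this to reduce, after the $D_6$-moves of Lemma~\ref{lem:D6}, to a standard Sears/Jackson-type transformation for terminating basic hypergeometric series; identifying the correct specialization is the sole non-formal step. Once the identity is established, $\ZZ_4$-invariance of the reduced expression is visible from its cyclic form, and the proposition follows.
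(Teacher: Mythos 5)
Your setup is correct and matches the paper's: you rightly identify that one reduces to writing out an entry of the l.h.s., that the middle pairing contributes $\prod\a^{-1}$ and matching constraints, that weight conservation leaves exactly one free internal summation parameter, and that the whole proof amounts to showing the resulting single sum of products of four $u$-factors is $\ZZ_4$-invariant. Up to that point you are following the paper.

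However, the step you defer — ``I expect this to reduce \ldots to a standard Sears/Jackson-type transformation'' — is precisely the entire mathematical content of the proposition, and it is a genuine gap. The object in question is $\sum_{a'_{0,2}} t^{a'_{0,2}a'_{1,3}}(\cdots)^{-1}u^{(1)}u^{(2)}u^{(3)}u^{(4)}$, a contraction of four terminating ${}_2\phi_1$'s over a shared parameter, i.e.\ a five-fold sum; there is no reason to expect this to be a catalogued transformation, and the paper itself signals (in the proof of Lemma~\ref{lem:D6}) that even the $D_6$-symmetry of a \emph{single} $u$ does not all follow from standard ${}_2\phi_1$ transformations. The paper's actual proof avoids any closed-form summation identity: it derives from the elementary contiguous relation \eqref{eq:recu} a pair of three-term recurrences \eqref{eq:recur}--\eqref{eq:recur2} in the \emph{external} labels whose $D_4$-orbit is closed (this closure is the key structural point), uses them to reduce, by induction on the sum of the arguments, to the degenerate case where eight of the twelve external labels vanish, evaluates the sum explicitly there (where it visibly depends only on $a_{0,2}=a_{2,0}$ and $a_{1,3}=a_{3,1}$ symmetrically), and then propagates $\ZZ_4$-invariance back through the $\ZZ_4$-invariant recurrence system. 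Note also that the sum does not in general ``collapse to a cyclically symmetric expression'' as you anticipate — it only collapses at the base case — and indeed the paper stresses that this move is the one non-bijective step of the associativity proof. To complete your argument you would need either to supply and prove the hypergeometric identity you are conjecturing, or to switch to a recurrence/induction scheme of the kind just described.
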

This is the only nontrivial identity, in the following sense: contrary to the diagrams of 
Prop.~\ref{prop:dualtetra}
and \ref{prop:octa}, here the internal labels are not uniquely fixed by the external labels, so that a summation
has to be performed. It also means that this part of the proof of associativity is {\em not}\/
bijective: the number of configurations of the l.h.s.\ may differ from that of the r.h.s.\ (e.g.,
several configurations in the l.h.s.\ may correspond to a single configuration in the r.h.s.;
such a phenomenon will be exhibited in Appendix~\ref{app:assoc}).
\begin{proof}
Let us denote by $\mathcal T$ an entry of the l.h.s., multiplied by some prefactors for convenience:
\[
\mathcal T=\prod_{\alpha\ne \beta}\a_{a_{\alpha,\beta}}
\begin{tikzpicture}[baseline=-3pt,scale=1.2]
\path (0,0.5) ++(135:1) coordinate (a);
\draw[invarrow=0.29,arrow=0.73] (a) -- node[pos=0.45,above] {$\ss a_{0,3}$} node[pos=0.45] {$\ss a_{1,3}$} node[pos=0.45,below] {$\ss a_{2,3}$} (-0.05,0.5) -- 
(-0.05,-0.5) --  node[pos=0.55,above] {$\ss a_{1,0}$} node[pos=0.55] {$\ss a_{2,0}$} node[pos=0.55,below] {$\ss a_{3,0}$} ++(225:1);
\path (0,0.5) ++(45:1) coordinate (b);
\draw[invarrow=0.29,arrow=0.73] (b) -- node[pos=0.45,above] {$\ss a_{0,2}$} node[pos=0.45] {$\ss a_{1,2}$} node[pos=0.45,below] {$\ss a_{3,2}$} (0.05,0.5) -- (0.05,-0.5) --  node[pos=0.55,above] {$\ss a_{0,1}$} node[pos=0.55] {$\ss a_{2,1}$} node[pos=0.55,below] {$\ss a_{3,1}$}  ++(-45:1);
\path decorate [thin,decoration={markings,mark = at position 0.6 with {\arrow[scale=2]{<}}}] { (0,-0.5) -- node[pos=0.7] {$\ss a'_{3,0}\quad a'_{2,0}$} node[below,pos=0.7] {$\ss a'_{3,1}\quad a'_{2,1}$}(0,0) };
\draw decorate [thin,decoration={markings,mark = at position 0.6 with {\arrow[scale=2]{<}}}] { (0,0.5) -- node[above,pos=0.7] {$\ss a'_{0,3}\quad a'_{0,2}$} node[pos=0.7] {$\ss a'_{1,3}\quad a'_{1,2}$} (0,0) };
\end{tikzpicture}
\]
It is a function of the 12 external labels $a_{\alpha,\beta}$, $\alpha,\beta=0,\ldots,3$, $\alpha\ne \beta$. Weight conservation at each vertex
and \eqref{eq:defdualb} imply the following equalities:
\begin{itemize}
\item The trivial equalities
$a'_{0,3}=a'_{3,0}$, $a'_{1,3}=a'_{3,1}$, $a'_{0,2}=a'_{2,0}$, $a'_{1,2}=a'_{2,1}$ coming from the pairing, cf \eqref{eq:defdualb}.
\item
The vanishing of the sum of weights of all external labels; this is the same for l.h.s.\ and r.h.s., and therefore
$\ZZ_4$-invariant.
\item Three relations involving the internal labels, namely:
\begin{align}\label{eq:subst}\notag
a'_{0,3}&=a_{0,2}+a_{0,3}-a'_{0,2}
\\
a'_{1,3}&=a_{1,3}-(a_{0,2}+a_{3,2}-a_{2,3})+a'_{0,2}
\\\notag
a'_{1,2}&=a_{2,0}+a_{2,1}-a'_{0,2}
\end{align}
so that there remains one free parameter among the internal labels, here chosen to be $a'_{0,2}$.
\end{itemize}
The resulting entry is therefore a sum:
\begin{align*}
\mathcal T=&\prod_{\alpha\ne \beta} \a_{a_{\alpha,\beta}} \sum_{a'_{0,2}\ge 0} t^{a'_{0,2}a'_{1,3}} 
\a_{a'_{0,2}}\a_{a'_{0,3}}\a_{a'_{1,2}}\a_{a'_{1,3}}\a_{a_{3,2}+a_{0,2}-a'_{0,2}}\a_{a_{1,0}+a_{2,0}-a'_{0,2}}
\\
&u^{a'_{0,2},a'_{0,3},a_{3,2},a_{0,2},a_{0,3},a_{3,2}+a_{0,2}-a'_{0,2}}u^{a'_{1,3},a'_{1,2},a_{2,3},a_{1,3},a_{1,2},a_{3,2}+a_{0,2}-a'_{0,2}}
\\
&u^{a'_{0,2},a'_{1,2},a_{1,0},a_{2,0},a_{2,1},a_{1,0}+a_{2,0}-a'_{0,2}}u^{a'_{1,3},a'_{0,3},a_{0,1},a_{3,1},a_{3,0},a_{1,0}+a_{2,0}-a'_{0,2}}
\end{align*}
where for compactness we have not performed the substitution \eqref{eq:subst} yet.

Our strategy will be as follows: we shall show that $\mathcal T$ satisfies a set of $\ZZ_4$-invariant
relations which allows to reduce the identity to a special case for which we can show directly
that l.h.s.\ and r.h.s.\ agree.

We use the redefinition \eqref{eq:defu}:
\begin{align}\label{eq:T}
\mathcal T=&\sum_{a'_{0,2}\ge 0} t^{a'_{0,2}a'_{1,3}} 
(\a_{a'_{0,2}}\a_{a'_{0,3}}\a_{a'_{1,2}}\a_{a'_{1,3}}\a_{a_{3,2}+a_{0,2}-a'_{0,2}}\a_{a_{1,0}+a_{2,0}-a'_{0,2}})^{-1}
\\\notag
&u_{a'_{0,2},a'_{0,3},a_{3,2},a_{0,2},a_{0,3},a_{3,2}+a_{0,2}-a'_{0,2}}u_{a'_{1,3},a'_{1,2},a_{2,3},a_{1,3},a_{1,2},a_{3,2}+a_{0,2}-a'_{0,2}}
\\\notag
&u_{a'_{0,2},a'_{1,2},a_{1,0},a_{2,0},a_{2,1},a_{1,0}+a_{2,0}-a'_{0,2}}u_{a'_{1,3},a'_{0,3},a_{0,1},a_{3,1},a_{3,0},a_{1,0}+a_{2,0}-a'_{0,2}}
\end{align}

\newcommand\T[1]{{\mathcal T}_{#1}}
Let us introduce the shorthand notation where we put in subscript substituted variables; e.g.,
$\T{a_{2,3}+1}$ stands for $\mathcal T$ in which $a_{2,3}$ has been
substituted with $a_{2,3}+1$, or $u_{i+1,i''+1}=u_{j,i+1,j',i',j'',i''+1}$.
Also denote $u^{(1)},\ldots,u^{(4)}$ for the four $u$ factors in \eqref{eq:T}.

We shall start from the following recurrence relation satisfied by $u_{\ldots}$, following directly from its definition:
\begin{equation}\label{eq:recu}
u_{i''+1}
=t^{i}(1-t^{j'})u_{j'-1}
+(1-t^{i})u_{j+1,i-1}
\end{equation}
(for all nonnegative integer values of the arguments)
as well as all other relations obtained from it by dihedral symmetry, cf Lemma~\ref{lem:D6}.

Apply the reflected version $u_{j''+1}=t^{j'}(1-t^i)u_{i-1}+(1-t^{j'})u_{i'+1,j'-1}$ of \eqref{eq:recu} to $u^{(1)}$:
\begin{align*}
\T{a_{0,3}+1}
=&
\sum_{a'_{0,2}\ge 0} t^{a'_{0,2}a'_{1,3}}
(\a_{a'_{0,2}}\a_{a'_{0,3}+1}\a_{a'_{1,2}}\a_{a'_{1,3}}\a_{a_{3,2}+a_{0,2}-a'_{0,2}}\a_{a_{1,0}+a_{2,0}-a'_{0,2}})^{-1}
\\
&\left(
t^{a_{3,2}}(1-t^{a'_{0,3}})u^{(1)}
+
(1-t^{a_{3,2}})u^{(1)}_{a_{3,2}-1,a_{0,2}+1}
\right)
u^{(2)}u^{(3)}u^{(4)}_{a'_{0,3}+1}
\end{align*}
where the shift of $a'_{0,3}$ is due to \eqref{eq:subst}.
The second term in the parentheses contributes precisely $(1-t^{a_{3,2}})\T{a_{3,2}-1,a_{0,2}+1}$, 
but the first term is unwanted.

Now apply $u_{i+1}=t^{i'}(1-t^{j''})u_{j''-1}+(1-t^{i'})u_{j'+1,i'-1}$ to $u^{(4)}$: 
\begin{align*}
&t^{a_{3,1}}(1-t^{a_{3,0}})\T{a_{3,0}-1}
\\
&=
\sum_{a'_{0,2}\ge 0} t^{a'_{0,2}a'_{1,3}}
(\a_{a'_{0,2}}\a_{a'_{0,3}}\a_{a'_{1,2}}\a_{a'_{1,3}}\a_{a_{3,2}+a_{0,2}-a'_{0,2}}\a_{a_{1,0}+a_{2,0}-a'_{0,2}})^{-1}
\\
&u^{(1)}u^{(2)}u^{(3)}
\left(
u^{(4)}_{a'_{0,3}+1}-(1-t^{a_{3,1}})u^{(4)}_{a_{0,1}+1,a_{3,1}-1}
\right)
\end{align*}
Again, the second term in the parentheses contributes $-(1-t^{a_{3,1}})\T{a_{0,1}+1,a_{3,1}-1}$, 
but the first term is unwanted.

Subtracting $t^{a_{3,2}}$ times the second expression from the first, we note that the unwanted terms cancel exactly, 
so that we obtain our first relation:
\begin{equation}\label{eq:recur}
\T{a_{0,3}+1}-t^{a_{3,2}+a_{3,1}}(1-t^{a_{3,0}})\T{a_{3,0}-1}
=(1-t^{a_{3,2}})\T{a_{3,2}-1,a_{0,2}+1}
+t^{a_{3,2}}(1-t^{a_{3,1}})\T{a_{0,1}+1,a_{3,1}-1}
\end{equation}

A second identity can be derived in a similar but slightly more involved way.
Apply \eqref{eq:recu} in the form $u_{j'+1}=t^{j}(1-t^{i''})u_{i''-1}+(1-t^{j})u_{j-1,i+1}$ to $u^{(1)}$ in \eqref{eq:T} and
$u_{i''+1}=t^{i}(1-t^{j'})u_{j'-1}+(1-t^{i})u_{j+1,i-1}$ to $u^{(2)}$, 
multiply the second identity by $t^{a'_{0,2}}$ and subtract; one obtains: 
\begin{align*}
&\T{a_{3,2}+1}-t^{a_{2,0}+a_{2,1}}(1-t^{a_{0,1}})\T{a_{2,3}-1}
\\
&=
\sum_{a'_{0,2}\ge 0} t^{a'_{0,2}a'_{1,3}}
(\a_{a'_{0,2}}\a_{a'_{0,3}}\a_{a'_{1,2}}\a_{a'_{1,3}}\a_{a_{3,2}+a_{0,2}-a'_{0,2}}\a_{a_{1,0}+a_{2,0}-a'_{0,2}})^{-1}
\\
&\left(
t^{a'_{1,3}/2}(1-t^{a'_{0,2}})u^{(1)}_{a'_{0,2}-1,a'_{0,3}+1}u^{(2)}+t^{a'_{0,2}/2}(1-t^{a'_{1,2}})u^{(1)}
u^{(2)}_{a'_{1,2}-1,a'_{1,3}+1}
\right)
u^{(3)}u^{(4)}
\end{align*}

Applying four times appropriate versions of \eqref{eq:recu}, one can derive similarly:
\begin{align*}
&t^{a_{2,1}}(1-t^{a_{2,0}})\T{a_{2,0}-1,a_{3,0}+1}
+(1-t^{a_{2,1}})\T{a_{2,1}-1,a_{3,1}+1}
\\
&=
\sum_{a'_{0,2}\ge 0} t^{a'_{0,2}a'_{1,3}} 
(\a_{a'_{0,2}}\a_{a'_{0,3}}\a_{a'_{1,2}}\a_{a'_{1,3}}\a_{a_{3,2}+a_{0,2}-a'_{0,2}}\a_{a_{1,0}+a_{2,0}-a'_{0,2}})^{-1}
\\
&u^{(1)}u^{(2)}
\left(
u^{(3)}_{}u^{(4)}_{}
+u^{(3)}_{}u^{(4)}_{}
\right)
\end{align*}

By reindexing $a'_{0,2}\to a'_{0,2}+ 1$, the two expressions are equal.
In conclusion,
\begin{equation}\label{eq:recur2}
\T{a_{3,2}+1}-t^{a_{2,0}+a_{2,1}}(1-t^{a_{0,1}})\T{a_{2,3}-1}
=
t^{a_{2,1}}(1-t^{a_{2,0}})\T{a_{2,0}-1,a_{3,1}+1}
+(1-t^{a_{2,1}})\T{a_{2,1}-1,a_{3,1}+1}
\end{equation}
Now note that \eqref{eq:recur2} is obtained from \eqref{eq:recur} by shift $\alpha\mapsto \alpha-1$ of indices in $\ZZ_4$.
Furthermore, the original expression \eqref{eq:T} has the symmetry of indices $\alpha\mapsto \alpha+2$ and
$\alpha\mapsto 3-\alpha$. Together this means that we have derived 
all equations obtained from \eqref{eq:recur} by $D_4$ action on the indices of the labels,
that is the usual $\ZZ_4$ shift of indices 
$\alpha \mapsto \alpha+1$, and the flip $\alpha\mapsto -\alpha\pmod 4$.

Now it is clear that by applying repeatedly \eqref{eq:recur}, we can express all values of
$\mathcal T$ in terms of its special case $a_{0,3}=0$ (induction on the sum of all arguments). Using the $D_4$ action,
we can reduce further to the
special case where the 8 parameters $a_{0,3},a_{2,3},a_{1,2},a_{3,2},a_{1,0},a_{3,0},a_{0,1},a_{2,1}$ are zero. In the latter case, the relations \eqref{eq:subst} simplify, and in particular, $a'_{0,2}$
is restricted to the range $\{\max(0,a_{0,2}-a_{1,3}),\ldots,a_{0,2}\}$; furthermore, 
using $u_{\ldots,a,0,b,\ldots}=\a_a\a_b$, $\mathcal T$ simplifies to
\begin{align*}
\mathcal T&=\sum_{a'_{0,2}=\max(0,a_{0,2}-a_{1,3})}^{a_{0,2}} t^{a'_{0,2}a'_{1,3}} 
(\a_{a'_{0,2}}\a_{a'_{0,3}}^2\a_{a'_{1,2}}^2\a_{a'_{1,3}})^{-1}
\\
&\quad\a_{a'_{0,3}}\a_{a_{0,2}}
\a_{a'_{1,2}}\a_{a_{1,3}}
\\
&\quad \a_{a'_{1,2}}\a_{a_{2,0}}
\a_{a'_{0,3}}\a_{a_{3,1}}
\\
&=\a_{a_{0,2}}\a_{a_{1,3}}\a_{a_{2,0}}\a_{a_{3,1}}
\sum_{a'_{0,2}=\max(0,a_{0,2}-a_{1,3})}^{a_{0,2}} \frac{t^{a'_{0,2}(a_{1,3}-a_{0,2}+a'_{0,2})}}
{\a_{a'_{0,2}}\a_{a_{1,3}-a_{0,2}+a'_{0,2}}}
\\
&=
\a_{a_{0,2}}\a_{a_{1,3}}\a_{a_{2,0}}\a_{a_{3,1}}
\sum_{i=0}^{\min(a_{0,2},a_{1,3})} \frac{t^{(a_{0,2}-i)(a_{1,3}-i)}}
{\a_{a_{0,2}-i}\a_{a_{1,3}-i}}
\end{align*}
The weight conservation is equivalent to $a_{0,2}=a_{2,0}$ and $a_{1,3}=a_{3,1}$, implying the $\ZZ_4$-invariance
of the final expression. It means that l.h.s.\ and r.h.s.\ are equal in this particular case of
external labels; but then, backtracking, we can use the $\ZZ_4$-invariant recurrence relations (the
$D_4$ orbit of \eqref{eq:recur}) to conclude that they are equal for all external labels.
\end{proof}

\appendix
\section{First few fugacities}\label{app:fug}
\def\size{1}\def\honey#1{\begin{tikzpicture}[baseline=-0.5cm]
\tikzset{honeyline/.style={black,thin,opaque}}
\honeycomb[\clip (0,0) -- (1,0) -- (0,1);]{#1}
\tikzset{honeyline/.style={transparent}}
\honeycomb{#1}
\end{tikzpicture}}
\noindent$\honey{0/0/0/0/0/0}=\honey{0/0/0/1/1/0}=\honey{0/0/0/2/2/0}=\honey{0/1/1/0/0/0}=\honey{0/1/1/1/1/0}=\honey{0/1/1/2/2/0}=\honey{0/2/2/0/0/0}=\honey{0/2/2/1/1/0}=\honey{0/2/2/2/2/0}=\honey{1/0/0/0/0/1}=\honey{1/0/0/1/1/1}=\honey{1/0/0/2/2/1}=\honey{1/0/1/0/1/0}=\honey{1/1/1/0/0/1}=\honey{1/1/2/0/1/0}=\honey{1/2/2/0/0/1}=\honey{2/0/0/0/0/2}=\honey{2/0/0/1/1/2}=\honey{2/0/0/2/2/2}=\honey{2/0/1/0/1/1}=\honey{2/0/2/0/2/0}=\honey{2/1/1/0/0/2}=\honey{2/1/2/0/1/1}=\honey{2/2/2/0/0/2}=1$

\noindent$\honey{0/1/0/1/0/1}=\honey{0/1/0/2/1/1}=1 -t$

\noindent$\honey{0/2/0/2/0/2}=(1-t)^2 (1+t)$

\noindent$\honey{0/2/1/1/0/1}=\honey{0/2/1/2/1/1}=\honey{1/1/0/1/0/2}=\honey{1/1/0/2/1/2}=(1-t) (1+t)$

\noindent$\honey{1/0/1/1/2/0}=\honey{1/1/2/1/2/0}=\honey{2/0/1/1/2/1}=1 + t$

\noindent$\honey{1/1/1/1/1/1}=1 + t -t^2$

\noindent$\honey{1/1/1/2/2/1}=\honey{1/2/2/1/1/1}=\honey{2/1/1/1/1/2}=1 + t -t^3$

\noindent$\honey{1/2/1/1/0/2}=(1-t) (1+t)^2$

\noindent$\honey{1/2/1/2/1/2}=(1-t) (1+t) \left(1+t+t^2-t^3\right)$

\noindent$\honey{1/2/2/2/2/1}=\honey{2/1/1/2/2/2}=\honey{2/2/2/1/1/2}=1 + t + t^2  -t^3  -t^4$

\noindent$\honey{2/1/2/1/2/1}=1 + t + t^2 -t^3$

\noindent$\honey{2/2/2/2/2/2}=1 + t + 2 t^2 -t^4 -2 t^5 -t^6 + t^7$

For example, the last equality is obtained by setting all parameters to $2$ in \eqref{eq:fug}:
\[
\sum_{r=0}^2 (-1)^r t^{2r+r(r+1)/2} \frac{\a_{4-r}}{\a_{2-r}^2 \a_r}
=
(1+t^2)(1+t+t^2)-t^3(1+t)(1+t+t^2)+t^7
\]

\section{Example of associativity}\label{app:assoc}
Here is the sequence of transformations from \eqref{eq:defLHS} to \eqref{eq:defRHS}
in the case $n=3$, where commuting transformations of the same type
have been grouped together:
\[
\begin{tikzpicture}[baseline=(current  bounding  box.center),rotate=225,scale=1.2]
\useasboundingbox (1,1) rectangle (4,4); 
\draw[] (2.,1.5) -- (1.7,1.64) -- (1.53,1.47); \draw[] (1.5,2.) -- (1.64,1.7) -- (1.47,1.53); \draw[] (2.,2.5) -- (1.7,2.64) -- (1.53,2.47); \draw[] (1.5,3.) -- (1.64,2.7) -- (1.47,2.53); \draw[] (2.,3.5) -- (2.3,3.36) -- (2.47,3.53); \draw[] (2.5,3.) -- (2.36,3.3) -- (2.53,3.47); \draw[] (3.,1.5) -- (2.7,1.64) -- (2.53,1.47); \draw[] (2.5,2.) -- (2.64,1.7) -- (2.47,1.53); \draw[] (3.,2.5) -- (3.3,2.36) -- (3.47,2.53); \draw[] (3.5,2.) -- (3.36,2.3) -- (3.53,2.47); \draw[] (3.,3.5) -- (3.3,3.36) -- (3.47,3.53); \draw[] (3.5,3.) -- (3.36,3.3) -- (3.53,3.47); \draw[arrow=0.2] (1.,1.5) -- (1.3,1.36) -- (1.47,1.53); \draw[arrow=0.2] (1.5,1.) -- (1.36,1.3) -- (1.53,1.47); \path decorate[thin,decoration={markings,mark = at position 0.9 with {\arrow[scale=2]{>}}}] { (1.33,1.33) -- (1.5,1.5) }; \draw[arrow=0.2] (1.,2.5) -- (1.3,2.36) -- (1.47,2.53); \draw[arrow=0.2] (1.5,2.) -- (1.36,2.3) -- (1.53,2.47); \path decorate[thin,decoration={markings,mark = at position 0.9 with {\arrow[scale=2]{>}}}] { (1.33,2.33) -- (1.5,2.5) }; \draw[arrow=0.2] (1.,3.5) -- (1.3,3.36) -- (1.47,3.53); \draw[arrow=0.2] (1.5,3.) -- (1.36,3.3) -- (1.53,3.47); \path decorate[thin,decoration={markings,mark = at position 0.9 with {\arrow[scale=2]{>}}}] { (1.33,3.33) -- (1.5,3.5) }; \draw[arrow=0.2] (2.,1.5) -- (2.3,1.36) -- (2.47,1.53); \draw[arrow=0.2] (2.5,1.) -- (2.36,1.3) -- (2.53,1.47); \path decorate[thin,decoration={markings,mark = at position 0.9 with {\arrow[scale=2]{>}}}] { (2.33,1.33) -- (2.5,1.5) }; \draw[arrow=0.2] (2.,2.5) -- (2.3,2.36) -- (2.47,2.53); \draw[arrow=0.2] (2.5,2.) -- (2.36,2.3) -- (2.53,2.47); \path decorate[thin,decoration={markings,mark = at position 0.9 with {\arrow[scale=2]{>}}}] { (2.33,2.33) -- (2.5,2.5) }; \draw[arrow=0.2] (2.,3.5) -- (1.7,3.64) -- (1.53,3.47); \draw[arrow=0.2] (1.5,4.) -- (1.64,3.7) -- (1.47,3.53); \path decorate[thin,decoration={markings,mark = at position 0.9 with {\arrow[scale=2]{>}}}] { (1.67,3.67) -- (1.5,3.5) }; \draw[arrow=0.2] (3.,1.5) -- (3.3,1.36) -- (3.47,1.53); \draw[arrow=0.2] (3.5,1.) -- (3.36,1.3) -- (3.53,1.47); \path decorate[thin,decoration={markings,mark = at position 0.9 with {\arrow[scale=2]{>}}}] { (3.33,1.33) -- (3.5,1.5) }; \draw[arrow=0.2] (3.,2.5) -- (2.7,2.64) -- (2.53,2.47); \draw[arrow=0.2] (2.5,3.) -- (2.64,2.7) -- (2.47,2.53); \path decorate[thin,decoration={markings,mark = at position 0.9 with {\arrow[scale=2]{>}}}] { (2.67,2.67) -- (2.5,2.5) }; \draw[arrow=0.2] (3.,3.5) -- (2.7,3.64) -- (2.53,3.47); \draw[arrow=0.2] (2.5,4.) -- (2.64,3.7) -- (2.47,3.53); \path decorate[thin,decoration={markings,mark = at position 0.9 with {\arrow[scale=2]{>}}}] { (2.67,3.67) -- (2.5,3.5) }; \draw[arrow=0.2] (4.,1.5) -- (3.7,1.64) -- (3.53,1.47); \draw[arrow=0.2] (3.5,2.) -- (3.64,1.7) -- (3.47,1.53); \path decorate[thin,decoration={markings,mark = at position 0.9 with {\arrow[scale=2]{>}}}] { (3.67,1.67) -- (3.5,1.5) }; \draw[arrow=0.2] (4.,2.5) -- (3.7,2.64) -- (3.53,2.47); \draw[arrow=0.2] (3.5,3.) -- (3.64,2.7) -- (3.47,2.53); \path decorate[thin,decoration={markings,mark = at position 0.9 with {\arrow[scale=2]{>}}}] { (3.67,2.67) -- (3.5,2.5) }; \draw[arrow=0.2] (4.,3.5) -- (3.7,3.64) -- (3.53,3.47); \draw[arrow=0.2] (3.5,4.) -- (3.64,3.7) -- (3.47,3.53); \path decorate[thin,decoration={markings,mark = at position 0.9 with {\arrow[scale=2]{>}}}] { (3.67,3.67) -- (3.5,3.5) }; 
\end{tikzpicture}
\xrightarrow{3\times\text{ tetra}}
\begin{tikzpicture}[baseline=(current  bounding  box.center),rotate=225,scale=1.2]
\useasboundingbox (1,1) rectangle (4,4);
\draw[] (2.,1.5) -- (1.7,1.64) -- (1.53,1.47); \draw[] (1.5,2.) -- (1.64,1.7) -- (1.47,1.53); \draw[] (2.,2.5) -- (1.7,2.64) -- (1.53,2.47); \draw[] (1.5,3.) -- (1.64,2.7) -- (1.47,2.53); \draw[] (2.,3.5) -- (2.3,3.36) -- (2.47,3.53); \draw[] (2.5,3.) -- (2.36,3.3) -- (2.53,3.47); \draw[] (3.,1.5) -- (2.7,1.64) -- (2.53,1.47); \draw[] (2.5,2.) -- (2.64,1.7) -- (2.47,1.53); \draw[] (3.,2.5) -- (3.3,2.36) -- (3.47,2.53); \draw[] (3.5,2.) -- (3.36,2.3) -- (3.53,2.47); \draw[] (3.,3.5) -- (3.3,3.36) -- (3.47,3.53); \draw[] (3.5,3.) -- (3.36,3.3) -- (3.53,3.47); \draw[arrow=0.2] (1.,1.5) -- (1.3,1.36) -- (1.47,1.53); \draw[arrow=0.2] (1.5,1.) -- (1.36,1.3) -- (1.53,1.47); \path decorate[thin,decoration={markings,mark = at position 0.9 with {\arrow[scale=2]{>}}}] { (1.33,1.33) -- (1.5,1.5) }; \draw[arrow=0.2] (1.,2.5) -- (1.3,2.36) -- (1.47,2.53); \draw[arrow=0.2] (1.5,2.) -- (1.36,2.3) -- (1.53,2.47); \path decorate[thin,decoration={markings,mark = at position 0.9 with {\arrow[scale=2]{>}}}] { (1.33,2.33) -- (1.5,2.5) }; \draw[arrow=0.2] (1.5,3.) -- (1.64,3.3) -- (1.47,3.47); \draw[arrow=0.2] (2.,3.5) -- (1.7,3.36) -- (1.53,3.53); \path decorate[thin,decoration={markings,mark = at position 0.9 with {\arrow[scale=2]{>}}}] { (1.67,3.33) -- (1.5,3.5) }; \draw[arrow=0.2] (1.5,4.) -- (1.36,3.7) -- (1.53,3.53); \draw[arrow=0.2] (1.,3.5) -- (1.3,3.64) -- (1.47,3.47); \path decorate[thin,decoration={markings,mark = at position 0.9 with {\arrow[scale=2]{>}}}] { (1.33,3.67) -- (1.5,3.5) }; \draw[arrow=0.2] (2.,1.5) -- (2.3,1.36) -- (2.47,1.53); \draw[arrow=0.2] (2.5,1.) -- (2.36,1.3) -- (2.53,1.47); \path decorate[thin,decoration={markings,mark = at position 0.9 with {\arrow[scale=2]{>}}}] { (2.33,1.33) -- (2.5,1.5) }; \draw[arrow=0.2] (2.5,2.) -- (2.64,2.3) -- (2.47,2.47); \draw[arrow=0.2] (3.,2.5) -- (2.7,2.36) -- (2.53,2.53); \path decorate[thin,decoration={markings,mark = at position 0.9 with {\arrow[scale=2]{>}}}] { (2.67,2.33) -- (2.5,2.5) }; \draw[arrow=0.2] (2.5,3.) -- (2.36,2.7) -- (2.53,2.53); \draw[arrow=0.2] (2.,2.5) -- (2.3,2.64) -- (2.47,2.47); \path decorate[thin,decoration={markings,mark = at position 0.9 with {\arrow[scale=2]{>}}}] { (2.33,2.67) -- (2.5,2.5) }; \draw[arrow=0.2] (3.,3.5) -- (2.7,3.64) -- (2.53,3.47); \draw[arrow=0.2] (2.5,4.) -- (2.64,3.7) -- (2.47,3.53); \path decorate[thin,decoration={markings,mark = at position 0.9 with {\arrow[scale=2]{>}}}] { (2.67,3.67) -- (2.5,3.5) }; \draw[arrow=0.2] (3.5,1.) -- (3.64,1.3) -- (3.47,1.47); \draw[arrow=0.2] (4.,1.5) -- (3.7,1.36) -- (3.53,1.53); \path decorate[thin,decoration={markings,mark = at position 0.9 with {\arrow[scale=2]{>}}}] { (3.67,1.33) -- (3.5,1.5) }; \draw[arrow=0.2] (3.5,2.) -- (3.36,1.7) -- (3.53,1.53); \draw[arrow=0.2] (3.,1.5) -- (3.3,1.64) -- (3.47,1.47); \path decorate[thin,decoration={markings,mark = at position 0.9 with {\arrow[scale=2]{>}}}] { (3.33,1.67) -- (3.5,1.5) }; \draw[arrow=0.2] (4.,2.5) -- (3.7,2.64) -- (3.53,2.47); \draw[arrow=0.2] (3.5,3.) -- (3.64,2.7) -- (3.47,2.53); \path decorate[thin,decoration={markings,mark = at position 0.9 with {\arrow[scale=2]{>}}}] { (3.67,2.67) -- (3.5,2.5) }; \draw[arrow=0.2] (4.,3.5) -- (3.7,3.64) -- (3.53,3.47); \draw[arrow=0.2] (3.5,4.) -- (3.64,3.7) -- (3.47,3.53); \path decorate[thin,decoration={markings,mark = at position 0.9 with {\arrow[scale=2]{>}}}] { (3.67,3.67) -- (3.5,3.5) }; 
\end{tikzpicture}
\xrightarrow{2\times\text{ octa}}
\]
\[
\begin{tikzpicture}[baseline=(current  bounding  box.center),rotate=225,scale=1.2]
\useasboundingbox (1,1) rectangle (4,4);
\draw[] (1.5,3.) -- (1.64,3.3) -- (1.47,3.47); \draw[] (2.,3.5) -- (1.7,3.36) -- (1.53,3.53); \draw[] (2.,1.5) -- (1.7,1.64) -- (1.53,1.47); \draw[] (1.5,2.) -- (1.64,1.7) -- (1.47,1.53); \draw[] (2.5,2.) -- (2.64,2.3) -- (2.47,2.47); \draw[] (3.,2.5) -- (2.7,2.36) -- (2.53,2.53); \draw[] (2.5,3.) -- (2.36,2.7) -- (2.53,2.53); \draw[] (2.,2.5) -- (2.3,2.64) -- (2.47,2.47); \draw[] (3.,3.5) -- (3.3,3.36) -- (3.47,3.53); \draw[] (3.5,3.) -- (3.36,3.3) -- (3.53,3.47); \draw[] (3.5,2.) -- (3.36,1.7) -- (3.53,1.53); \draw[] (3.,1.5) -- (3.3,1.64) -- (3.47,1.47); \draw[arrow=0.2] (1.,1.5) -- (1.3,1.36) -- (1.47,1.53); \draw[arrow=0.2] (1.5,1.) -- (1.36,1.3) -- (1.53,1.47); \path decorate[thin,decoration={markings,mark = at position 0.9 with {\arrow[scale=2]{>}}}] { (1.33,1.33) -- (1.5,1.5) }; \draw[arrow=0.2] (1.,2.5) -- (1.3,2.36) -- (1.47,2.53); \draw[arrow=0.2] (1.5,2.) -- (1.36,2.3) -- (1.53,2.47); \path decorate[thin,decoration={markings,mark = at position 0.9 with {\arrow[scale=2]{>}}}] { (1.33,2.33) -- (1.5,2.5) }; \draw[arrow=0.2] (1.5,4.) -- (1.36,3.7) -- (1.53,3.53); \draw[arrow=0.2] (1.,3.5) -- (1.3,3.64) -- (1.47,3.47); \path decorate[thin,decoration={markings,mark = at position 0.9 with {\arrow[scale=2]{>}}}] { (1.33,3.67) -- (1.5,3.5) }; \draw[arrow=0.2] (2.,1.5) -- (2.3,1.36) -- (2.47,1.53); \draw[arrow=0.2] (2.5,1.) -- (2.36,1.3) -- (2.53,1.47); \path decorate[thin,decoration={markings,mark = at position 0.9 with {\arrow[scale=2]{>}}}] { (2.33,1.33) -- (2.5,1.5) }; \draw[arrow=0.2] (2.,2.5) -- (1.7,2.64) -- (1.53,2.47); \draw[arrow=0.2] (1.5,3.) -- (1.64,2.7) -- (1.47,2.53); \path decorate[thin,decoration={markings,mark = at position 0.9 with {\arrow[scale=2]{>}}}] { (1.67,2.67) -- (1.5,2.5) }; \draw[arrow=0.2] (2.,3.5) -- (2.3,3.36) -- (2.47,3.53); \draw[arrow=0.2] (2.5,3.) -- (2.36,3.3) -- (2.53,3.47); \path decorate[thin,decoration={markings,mark = at position 0.9 with {\arrow[scale=2]{>}}}] { (2.33,3.33) -- (2.5,3.5) }; \draw[arrow=0.2] (3.,1.5) -- (2.7,1.64) -- (2.53,1.47); \draw[arrow=0.2] (2.5,2.) -- (2.64,1.7) -- (2.47,1.53); \path decorate[thin,decoration={markings,mark = at position 0.9 with {\arrow[scale=2]{>}}}] { (2.67,1.67) -- (2.5,1.5) }; \draw[arrow=0.2] (3.,2.5) -- (3.3,2.36) -- (3.47,2.53); \draw[arrow=0.2] (3.5,2.) -- (3.36,2.3) -- (3.53,2.47); \path decorate[thin,decoration={markings,mark = at position 0.9 with {\arrow[scale=2]{>}}}] { (3.33,2.33) -- (3.5,2.5) }; \draw[arrow=0.2] (3.,3.5) -- (2.7,3.64) -- (2.53,3.47); \draw[arrow=0.2] (2.5,4.) -- (2.64,3.7) -- (2.47,3.53); \path decorate[thin,decoration={markings,mark = at position 0.9 with {\arrow[scale=2]{>}}}] { (2.67,3.67) -- (2.5,3.5) }; \draw[arrow=0.2] (3.5,1.) -- (3.64,1.3) -- (3.47,1.47); \draw[arrow=0.2] (4.,1.5) -- (3.7,1.36) -- (3.53,1.53); \path decorate[thin,decoration={markings,mark = at position 0.9 with {\arrow[scale=2]{>}}}] { (3.67,1.33) -- (3.5,1.5) }; \draw[arrow=0.2] (4.,2.5) -- (3.7,2.64) -- (3.53,2.47); \draw[arrow=0.2] (3.5,3.) -- (3.64,2.7) -- (3.47,2.53); \path decorate[thin,decoration={markings,mark = at position 0.9 with {\arrow[scale=2]{>}}}] { (3.67,2.67) -- (3.5,2.5) }; \draw[arrow=0.2] (4.,3.5) -- (3.7,3.64) -- (3.53,3.47); \draw[arrow=0.2] (3.5,4.) -- (3.64,3.7) -- (3.47,3.53); \path decorate[thin,decoration={markings,mark = at position 0.9 with {\arrow[scale=2]{>}}}] { (3.67,3.67) -- (3.5,3.5) }; 
\path decorate[thin,decoration={markings,mark = at position 0.9 with {\arrow[scale=2]{<}}}] { (2.67,2.33) -- (2.5,2.5) };
\path decorate[thin,decoration={markings,mark = at position 0.9 with {\arrow[scale=2]{<}}}] { (2.33,2.67) -- (2.5,2.5) }; 
\end{tikzpicture}
\xrightarrow{\text{dual tetra}}
\begin{tikzpicture}[baseline=(current  bounding  box.center),rotate=225,scale=1.2]
\useasboundingbox (1,1) rectangle (4,4);
\draw[] (1.5,3.) -- (1.64,3.3) -- (1.47,3.47); \draw[] (2.,3.5) -- (1.7,3.36) -- (1.53,3.53); \draw[] (2.,1.5) -- (1.7,1.64) -- (1.53,1.47); \draw[] (1.5,2.) -- (1.64,1.7) -- (1.47,1.53); \draw[] (2.,2.5) -- (2.3,2.36) -- (2.47,2.53); \draw[] (2.5,2.) -- (2.36,2.3) -- (2.53,2.47); \draw[] (3.,2.5) -- (2.7,2.64) -- (2.53,2.47); \draw[] (2.5,3.) -- (2.64,2.7) -- (2.47,2.53); \draw[] (3.,3.5) -- (3.3,3.36) -- (3.47,3.53); \draw[] (3.5,3.) -- (3.36,3.3) -- (3.53,3.47); \draw[] (3.5,2.) -- (3.36,1.7) -- (3.53,1.53); \draw[] (3.,1.5) -- (3.3,1.64) -- (3.47,1.47); \draw[arrow=0.2] (1.,1.5) -- (1.3,1.36) -- (1.47,1.53); \draw[arrow=0.2] (1.5,1.) -- (1.36,1.3) -- (1.53,1.47); \path decorate[thin,decoration={markings,mark = at position 0.9 with {\arrow[scale=2]{>}}}] { (1.33,1.33) -- (1.5,1.5) }; \draw[arrow=0.2] (1.,2.5) -- (1.3,2.36) -- (1.47,2.53); \draw[arrow=0.2] (1.5,2.) -- (1.36,2.3) -- (1.53,2.47); \path decorate[thin,decoration={markings,mark = at position 0.9 with {\arrow[scale=2]{>}}}] { (1.33,2.33) -- (1.5,2.5) }; \draw[arrow=0.2] (1.5,4.) -- (1.36,3.7) -- (1.53,3.53); \draw[arrow=0.2] (1.,3.5) -- (1.3,3.64) -- (1.47,3.47); \path decorate[thin,decoration={markings,mark = at position 0.9 with {\arrow[scale=2]{>}}}] { (1.33,3.67) -- (1.5,3.5) }; \draw[arrow=0.2] (2.,1.5) -- (2.3,1.36) -- (2.47,1.53); \draw[arrow=0.2] (2.5,1.) -- (2.36,1.3) -- (2.53,1.47); \path decorate[thin,decoration={markings,mark = at position 0.9 with {\arrow[scale=2]{>}}}] { (2.33,1.33) -- (2.5,1.5) }; \draw[arrow=0.2] (2.,2.5) -- (1.7,2.64) -- (1.53,2.47); \draw[arrow=0.2] (1.5,3.) -- (1.64,2.7) -- (1.47,2.53); \path decorate[thin,decoration={markings,mark = at position 0.9 with {\arrow[scale=2]{>}}}] { (1.67,2.67) -- (1.5,2.5) }; \draw[arrow=0.2] (2.,3.5) -- (2.3,3.36) -- (2.47,3.53); \draw[arrow=0.2] (2.5,3.) -- (2.36,3.3) -- (2.53,3.47); \path decorate[thin,decoration={markings,mark = at position 0.9 with {\arrow[scale=2]{>}}}] { (2.33,3.33) -- (2.5,3.5) }; \draw[arrow=0.2] (3.,1.5) -- (2.7,1.64) -- (2.53,1.47); \draw[arrow=0.2] (2.5,2.) -- (2.64,1.7) -- (2.47,1.53); \path decorate[thin,decoration={markings,mark = at position 0.9 with {\arrow[scale=2]{>}}}] { (2.67,1.67) -- (2.5,1.5) }; \draw[arrow=0.2] (3.,2.5) -- (3.3,2.36) -- (3.47,2.53); \draw[arrow=0.2] (3.5,2.) -- (3.36,2.3) -- (3.53,2.47); \path decorate[thin,decoration={markings,mark = at position 0.9 with {\arrow[scale=2]{>}}}] { (3.33,2.33) -- (3.5,2.5) }; \draw[arrow=0.2] (3.,3.5) -- (2.7,3.64) -- (2.53,3.47); \draw[arrow=0.2] (2.5,4.) -- (2.64,3.7) -- (2.47,3.53); \path decorate[thin,decoration={markings,mark = at position 0.9 with {\arrow[scale=2]{>}}}] { (2.67,3.67) -- (2.5,3.5) }; \draw[arrow=0.2] (3.5,1.) -- (3.64,1.3) -- (3.47,1.47); \draw[arrow=0.2] (4.,1.5) -- (3.7,1.36) -- (3.53,1.53); \path decorate[thin,decoration={markings,mark = at position 0.9 with {\arrow[scale=2]{>}}}] { (3.67,1.33) -- (3.5,1.5) }; \draw[arrow=0.2] (4.,2.5) -- (3.7,2.64) -- (3.53,2.47); \draw[arrow=0.2] (3.5,3.) -- (3.64,2.7) -- (3.47,2.53); \path decorate[thin,decoration={markings,mark = at position 0.9 with {\arrow[scale=2]{>}}}] { (3.67,2.67) -- (3.5,2.5) }; \draw[arrow=0.2] (4.,3.5) -- (3.7,3.64) -- (3.53,3.47); \draw[arrow=0.2] (3.5,4.) -- (3.64,3.7) -- (3.47,3.53); \path decorate[thin,decoration={markings,mark = at position 0.9 with {\arrow[scale=2]{>}}}] { (3.67,3.67) -- (3.5,3.5) }; 
\path decorate[thin,decoration={markings,mark = at position 0.9 with {\arrow[scale=2]{<}}}] { (2.33,2.33) -- (2.5,2.5) };
\path decorate[thin,decoration={markings,mark = at position 0.9 with {\arrow[scale=2]{<}}}] { (2.67,2.67) -- (2.5,2.5) };
\end{tikzpicture}
\xrightarrow{4\times\text{ tetra}}
\begin{tikzpicture}[baseline=(current  bounding  box.center),rotate=225,scale=1.2]
\useasboundingbox (1,1) rectangle (4,4);
\draw[] (1.5,3.) -- (1.64,3.3) -- (1.47,3.47); \draw[] (2.,3.5) -- (1.7,3.36) -- (1.53,3.53); \draw[] (2.,1.5) -- (1.7,1.64) -- (1.53,1.47); \draw[] (1.5,2.) -- (1.64,1.7) -- (1.47,1.53); \draw[] (2.,2.5) -- (2.3,2.36) -- (2.47,2.53); \draw[] (2.5,2.) -- (2.36,2.3) -- (2.53,2.47); \draw[] (3.,2.5) -- (2.7,2.64) -- (2.53,2.47); \draw[] (2.5,3.) -- (2.64,2.7) -- (2.47,2.53); \draw[] (3.,3.5) -- (3.3,3.36) -- (3.47,3.53); \draw[] (3.5,3.) -- (3.36,3.3) -- (3.53,3.47); \draw[] (3.5,2.) -- (3.36,1.7) -- (3.53,1.53); \draw[] (3.,1.5) -- (3.3,1.64) -- (3.47,1.47); \draw[arrow=0.2] (1.,1.5) -- (1.3,1.36) -- (1.47,1.53); \draw[arrow=0.2] (1.5,1.) -- (1.36,1.3) -- (1.53,1.47); \path decorate[thin,decoration={markings,mark = at position 0.9 with {\arrow[scale=2]{>}}}] { (1.33,1.33) -- (1.5,1.5) }; \draw[arrow=0.2] (1.5,2.) -- (1.64,2.3) -- (1.47,2.47); \draw[arrow=0.2] (2.,2.5) -- (1.7,2.36) -- (1.53,2.53); \path decorate[thin,decoration={markings,mark = at position 0.9 with {\arrow[scale=2]{>}}}] { (1.67,2.33) -- (1.5,2.5) }; \draw[arrow=0.2] (1.5,3.) -- (1.36,2.7) -- (1.53,2.53); \draw[arrow=0.2] (1.,2.5) -- (1.3,2.64) -- (1.47,2.47); \path decorate[thin,decoration={markings,mark = at position 0.9 with {\arrow[scale=2]{>}}}] { (1.33,2.67) -- (1.5,2.5) }; \draw[arrow=0.2] (1.5,4.) -- (1.36,3.7) -- (1.53,3.53); \draw[arrow=0.2] (1.,3.5) -- (1.3,3.64) -- (1.47,3.47); \path decorate[thin,decoration={markings,mark = at position 0.9 with {\arrow[scale=2]{>}}}] { (1.33,3.67) -- (1.5,3.5) }; \draw[arrow=0.2] (2.5,1.) -- (2.64,1.3) -- (2.47,1.47); \draw[arrow=0.2] (3.,1.5) -- (2.7,1.36) -- (2.53,1.53); \path decorate[thin,decoration={markings,mark = at position 0.9 with {\arrow[scale=2]{>}}}] { (2.67,1.33) -- (2.5,1.5) }; \draw[arrow=0.2] (2.5,2.) -- (2.36,1.7) -- (2.53,1.53); \draw[arrow=0.2] (2.,1.5) -- (2.3,1.64) -- (2.47,1.47); \path decorate[thin,decoration={markings,mark = at position 0.9 with {\arrow[scale=2]{>}}}] { (2.33,1.67) -- (2.5,1.5) }; \draw[arrow=0.2] (2.5,3.) -- (2.64,3.3) -- (2.47,3.47); \draw[arrow=0.2] (3.,3.5) -- (2.7,3.36) -- (2.53,3.53); \path decorate[thin,decoration={markings,mark = at position 0.9 with {\arrow[scale=2]{>}}}] { (2.67,3.33) -- (2.5,3.5) }; \draw[arrow=0.2] (2.5,4.) -- (2.36,3.7) -- (2.53,3.53); \draw[arrow=0.2] (2.,3.5) -- (2.3,3.64) -- (2.47,3.47); \path decorate[thin,decoration={markings,mark = at position 0.9 with {\arrow[scale=2]{>}}}] { (2.33,3.67) -- (2.5,3.5) }; \draw[arrow=0.2] (3.5,1.) -- (3.64,1.3) -- (3.47,1.47); \draw[arrow=0.2] (4.,1.5) -- (3.7,1.36) -- (3.53,1.53); \path decorate[thin,decoration={markings,mark = at position 0.9 with {\arrow[scale=2]{>}}}] { (3.67,1.33) -- (3.5,1.5) }; \draw[arrow=0.2] (3.5,2.) -- (3.64,2.3) -- (3.47,2.47); \draw[arrow=0.2] (4.,2.5) -- (3.7,2.36) -- (3.53,2.53); \path decorate[thin,decoration={markings,mark = at position 0.9 with {\arrow[scale=2]{>}}}] { (3.67,2.33) -- (3.5,2.5) }; \draw[arrow=0.2] (3.5,3.) -- (3.36,2.7) -- (3.53,2.53); \draw[arrow=0.2] (3.,2.5) -- (3.3,2.64) -- (3.47,2.47); \path decorate[thin,decoration={markings,mark = at position 0.9 with {\arrow[scale=2]{>}}}] { (3.33,2.67) -- (3.5,2.5) }; \draw[arrow=0.2] (4.,3.5) -- (3.7,3.64) -- (3.53,3.47); \draw[arrow=0.2] (3.5,4.) -- (3.64,3.7) -- (3.47,3.53); \path decorate[thin,decoration={markings,mark = at position 0.9 with {\arrow[scale=2]{>}}}] { (3.67,3.67) -- (3.5,3.5) }; 
\path decorate[thin,decoration={markings,mark = at position 0.9 with {\arrow[scale=2]{<}}}] { (2.33,2.33) -- (2.5,2.5) };
\path decorate[thin,decoration={markings,mark = at position 0.9 with {\arrow[scale=2]{<}}}] { (2.67,2.67) -- (2.5,2.5) };
\end{tikzpicture}
\]
\[
\xrightarrow{2\times\text{ octa}}
\begin{tikzpicture}[baseline=(current  bounding  box.center),rotate=225,scale=1.2]
\useasboundingbox (1,1) rectangle (4,4);
\draw[] (1.5,2.) -- (1.64,2.3) -- (1.47,2.47); \draw[] (2.,2.5) -- (1.7,2.36) -- (1.53,2.53); \draw[] (1.5,3.) -- (1.64,3.3) -- (1.47,3.47); \draw[] (2.,3.5) -- (1.7,3.36) -- (1.53,3.53); \draw[] (2.5,2.) -- (2.36,1.7) -- (2.53,1.53); \draw[] (2.,1.5) -- (2.3,1.64) -- (2.47,1.47); \draw[] (2.5,3.) -- (2.64,3.3) -- (2.47,3.47); \draw[] (3.,3.5) -- (2.7,3.36) -- (2.53,3.53); \draw[] (3.5,2.) -- (3.36,1.7) -- (3.53,1.53); \draw[] (3.,1.5) -- (3.3,1.64) -- (3.47,1.47); \draw[] (3.5,3.) -- (3.36,2.7) -- (3.53,2.53); \draw[] (3.,2.5) -- (3.3,2.64) -- (3.47,2.47); \draw[arrow=0.2] (1.,1.5) -- (1.3,1.36) -- (1.47,1.53); \draw[arrow=0.2] (1.5,1.) -- (1.36,1.3) -- (1.53,1.47); \path decorate[thin,decoration={markings,mark = at position 0.9 with {\arrow[scale=2]{>}}}] { (1.33,1.33) -- (1.5,1.5) }; \draw[arrow=0.2] (1.5,3.) -- (1.36,2.7) -- (1.53,2.53); \draw[arrow=0.2] (1.,2.5) -- (1.3,2.64) -- (1.47,2.47); \path decorate[thin,decoration={markings,mark = at position 0.9 with {\arrow[scale=2]{>}}}] { (1.33,2.67) -- (1.5,2.5) }; \draw[arrow=0.2] (1.5,4.) -- (1.36,3.7) -- (1.53,3.53); \draw[arrow=0.2] (1.,3.5) -- (1.3,3.64) -- (1.47,3.47); \path decorate[thin,decoration={markings,mark = at position 0.9 with {\arrow[scale=2]{>}}}] { (1.33,3.67) -- (1.5,3.5) }; \draw[arrow=0.2] (2.,1.5) -- (1.7,1.64) -- (1.53,1.47); \draw[arrow=0.2] (1.5,2.) -- (1.64,1.7) -- (1.47,1.53); \path decorate[thin,decoration={markings,mark = at position 0.9 with {\arrow[scale=2]{>}}}] { (1.67,1.67) -- (1.5,1.5) }; \draw[arrow=0.2] (2.,2.5) -- (2.3,2.36) -- (2.47,2.53); \draw[arrow=0.2] (2.5,2.) -- (2.36,2.3) -- (2.53,2.47); \path decorate[thin,decoration={markings,mark = at position 0.9 with {\arrow[scale=2]{>}}}] { (2.33,2.33) -- (2.5,2.5) }; \draw[arrow=0.2] (2.5,1.) -- (2.64,1.3) -- (2.47,1.47); \draw[arrow=0.2] (3.,1.5) -- (2.7,1.36) -- (2.53,1.53); \path decorate[thin,decoration={markings,mark = at position 0.9 with {\arrow[scale=2]{>}}}] { (2.67,1.33) -- (2.5,1.5) }; \draw[arrow=0.2] (2.5,4.) -- (2.36,3.7) -- (2.53,3.53); \draw[arrow=0.2] (2.,3.5) -- (2.3,3.64) -- (2.47,3.47); \path decorate[thin,decoration={markings,mark = at position 0.9 with {\arrow[scale=2]{>}}}] { (2.33,3.67) -- (2.5,3.5) }; \draw[arrow=0.2] (3.,2.5) -- (2.7,2.64) -- (2.53,2.47); \draw[arrow=0.2] (2.5,3.) -- (2.64,2.7) -- (2.47,2.53); \path decorate[thin,decoration={markings,mark = at position 0.9 with {\arrow[scale=2]{>}}}] { (2.67,2.67) -- (2.5,2.5) }; \draw[arrow=0.2] (3.,3.5) -- (3.3,3.36) -- (3.47,3.53); \draw[arrow=0.2] (3.5,3.) -- (3.36,3.3) -- (3.53,3.47); \path decorate[thin,decoration={markings,mark = at position 0.9 with {\arrow[scale=2]{>}}}] { (3.33,3.33) -- (3.5,3.5) }; \draw[arrow=0.2] (3.5,1.) -- (3.64,1.3) -- (3.47,1.47); \draw[arrow=0.2] (4.,1.5) -- (3.7,1.36) -- (3.53,1.53); \path decorate[thin,decoration={markings,mark = at position 0.9 with {\arrow[scale=2]{>}}}] { (3.67,1.33) -- (3.5,1.5) }; \draw[arrow=0.2] (3.5,2.) -- (3.64,2.3) -- (3.47,2.47); \draw[arrow=0.2] (4.,2.5) -- (3.7,2.36) -- (3.53,2.53); \path decorate[thin,decoration={markings,mark = at position 0.9 with {\arrow[scale=2]{>}}}] { (3.67,2.33) -- (3.5,2.5) }; \draw[arrow=0.2] (4.,3.5) -- (3.7,3.64) -- (3.53,3.47); \draw[arrow=0.2] (3.5,4.) -- (3.64,3.7) -- (3.47,3.53); \path decorate[thin,decoration={markings,mark = at position 0.9 with {\arrow[scale=2]{>}}}] { (3.67,3.67) -- (3.5,3.5) }; 
\end{tikzpicture}
\xrightarrow{3\times\text{ tetra}}
\begin{tikzpicture}[baseline=(current  bounding  box.center),rotate=225,scale=1.2]
\useasboundingbox (1,1) rectangle (4,4);
\draw[] (1.5,2.) -- (1.64,2.3) -- (1.47,2.47); \draw[] (2.,2.5) -- (1.7,2.36) -- (1.53,2.53); \draw[] (1.5,3.) -- (1.64,3.3) -- (1.47,3.47); \draw[] (2.,3.5) -- (1.7,3.36) -- (1.53,3.53); \draw[] (2.5,2.) -- (2.36,1.7) -- (2.53,1.53); \draw[] (2.,1.5) -- (2.3,1.64) -- (2.47,1.47); \draw[] (2.5,3.) -- (2.64,3.3) -- (2.47,3.47); \draw[] (3.,3.5) -- (2.7,3.36) -- (2.53,3.53); \draw[] (3.5,2.) -- (3.36,1.7) -- (3.53,1.53); \draw[] (3.,1.5) -- (3.3,1.64) -- (3.47,1.47); \draw[] (3.5,3.) -- (3.36,2.7) -- (3.53,2.53); \draw[] (3.,2.5) -- (3.3,2.64) -- (3.47,2.47); \draw[arrow=0.2] (1.5,1.) -- (1.64,1.3) -- (1.47,1.47); \draw[arrow=0.2] (2.,1.5) -- (1.7,1.36) -- (1.53,1.53); \path decorate[thin,decoration={markings,mark = at position 0.9 with {\arrow[scale=2]{>}}}] { (1.67,1.33) -- (1.5,1.5) }; \draw[arrow=0.2] (1.5,2.) -- (1.36,1.7) -- (1.53,1.53); \draw[arrow=0.2] (1.,1.5) -- (1.3,1.64) -- (1.47,1.47); \path decorate[thin,decoration={markings,mark = at position 0.9 with {\arrow[scale=2]{>}}}] { (1.33,1.67) -- (1.5,1.5) }; \draw[arrow=0.2] (1.5,3.) -- (1.36,2.7) -- (1.53,2.53); \draw[arrow=0.2] (1.,2.5) -- (1.3,2.64) -- (1.47,2.47); \path decorate[thin,decoration={markings,mark = at position 0.9 with {\arrow[scale=2]{>}}}] { (1.33,2.67) -- (1.5,2.5) }; \draw[arrow=0.2] (1.5,4.) -- (1.36,3.7) -- (1.53,3.53); \draw[arrow=0.2] (1.,3.5) -- (1.3,3.64) -- (1.47,3.47); \path decorate[thin,decoration={markings,mark = at position 0.9 with {\arrow[scale=2]{>}}}] { (1.33,3.67) -- (1.5,3.5) }; \draw[arrow=0.2] (2.5,1.) -- (2.64,1.3) -- (2.47,1.47); \draw[arrow=0.2] (3.,1.5) -- (2.7,1.36) -- (2.53,1.53); \path decorate[thin,decoration={markings,mark = at position 0.9 with {\arrow[scale=2]{>}}}] { (2.67,1.33) -- (2.5,1.5) }; \draw[arrow=0.2] (2.5,2.) -- (2.64,2.3) -- (2.47,2.47); \draw[arrow=0.2] (3.,2.5) -- (2.7,2.36) -- (2.53,2.53); \path decorate[thin,decoration={markings,mark = at position 0.9 with {\arrow[scale=2]{>}}}] { (2.67,2.33) -- (2.5,2.5) }; \draw[arrow=0.2] (2.5,3.) -- (2.36,2.7) -- (2.53,2.53); \draw[arrow=0.2] (2.,2.5) -- (2.3,2.64) -- (2.47,2.47); \path decorate[thin,decoration={markings,mark = at position 0.9 with {\arrow[scale=2]{>}}}] { (2.33,2.67) -- (2.5,2.5) }; \draw[arrow=0.2] (2.5,4.) -- (2.36,3.7) -- (2.53,3.53); \draw[arrow=0.2] (2.,3.5) -- (2.3,3.64) -- (2.47,3.47); \path decorate[thin,decoration={markings,mark = at position 0.9 with {\arrow[scale=2]{>}}}] { (2.33,3.67) -- (2.5,3.5) }; \draw[arrow=0.2] (3.5,1.) -- (3.64,1.3) -- (3.47,1.47); \draw[arrow=0.2] (4.,1.5) -- (3.7,1.36) -- (3.53,1.53); \path decorate[thin,decoration={markings,mark = at position 0.9 with {\arrow[scale=2]{>}}}] { (3.67,1.33) -- (3.5,1.5) }; \draw[arrow=0.2] (3.5,2.) -- (3.64,2.3) -- (3.47,2.47); \draw[arrow=0.2] (4.,2.5) -- (3.7,2.36) -- (3.53,2.53); \path decorate[thin,decoration={markings,mark = at position 0.9 with {\arrow[scale=2]{>}}}] { (3.67,2.33) -- (3.5,2.5) }; \draw[arrow=0.2] (3.5,3.) -- (3.64,3.3) -- (3.47,3.47); \draw[arrow=0.2] (4.,3.5) -- (3.7,3.36) -- (3.53,3.53); \path decorate[thin,decoration={markings,mark = at position 0.9 with {\arrow[scale=2]{>}}}] { (3.67,3.33) -- (3.5,3.5) }; \draw[arrow=0.2] (3.5,4.) -- (3.36,3.7) -- (3.53,3.53); \draw[arrow=0.2] (3.,3.5) -- (3.3,3.64) -- (3.47,3.47); \path decorate[thin,decoration={markings,mark = at position 0.9 with {\arrow[scale=2]{>}}}] { (3.33,3.67) -- (3.5,3.5) }; 
\end{tikzpicture}
\]

Now let us consider an example. Set $k=3$, $\lambda=\mu=(1)$,
$\nu=(1,1)$, $\rho=(2,1,1)$. The coefficient of $P^\rho$ in the triple
product $P^\lambda P^\mu P^\nu$ is $2+2t+t^2$; however it decomposes in two
different ways as $(1)+(1+t)^2=(1+t)+(1+t+t^2)$:
\begin{alignat*}{2}
P^{(1)} P^{(1)}&=P^{(2)}+(1+t)P^{(1,1)},&\quad &\left\{
\begin{aligned}
P^{(2)}P^{(1,1)}&=P^{(2,1,1)}+\cdots,
\\
P^{(1,1)}P^{(1,1)}&=(1+t) P^{(2,1,1)}+\cdots
\end{aligned}\right.
\\
P^{(1)}P^{(1,1)}&=P^{(2,1)}+(1+t+t^2)P^{(1,1,1)},&\quad &\left\{
\begin{aligned}
P^{(1)}P^{(2,1)}&=(1+t)P^{(2,1,1)}+\cdots,
\\
P^{(1)}P^{(1,1,1)}&=P^{(2,1,1)}+\cdots
\end{aligned}\right.
\end{alignat*}
The number of honeycombs in both cases is $2$ (the value at $t=0$); however, because of the different
decomposition, one cannot have a fugacity-preserving bijection between these two pairs of honeycombs.

We now draw the sequence of double puzzles establishing the equality of the two decompositions, following
Section~\ref{sec:assoc}. We shall switch to the dual graphical notation, and rather than writing out
the value of every label explicitly, which would be very hard to read, we shall use a ``two-lane road'' notation:
to each index $\{0,1,2,3\}$ is associated a color $\{\text{red},\text{green},\text{blue},\text{yellow}\}$
and to each group of labels $a_{\alpha,\beta}$ is associated an (unordered) set of $a_{\alpha,\beta}$ roads whose left 
(resp.\ right) incoming lane is colored $\alpha$ (resp.\ $\beta$). 
The rule is that traffic should be able to go through, i.e.,
a left incoming lane should also be a left outgoing lane (with the strongest constraint at $D$ vertices
that pairs of lanes stay together). We then obtain the following pictures, with the same sequence of moves:

\tikzset{vert/.style={circle,fill=#1,inner sep=0.3mm,draw,transform shape}}
\tikzset{myname/.style={name=v#1}}
\tikzset{edge/.style={draw=#1,very thick}}
\begin{center}

\end{center}
\vfill\eject

\gdef\MRshorten#1 #2MRend{#1}%
\gdef\MRfirsttwo#1#2{\if#1M%
MR\else MR#1#2\fi}
\def\MRfix#1{\MRshorten\MRfirsttwo#1 MRend}
\renewcommand\MR[1]{\relax\ifhmode\unskip\spacefactor3000 \space\fi
\MRhref{\MRfix{#1}}{{\scriptsize \MRfix{#1}}}}
\renewcommand{\MRhref}[2]{%
\href{http://www.ams.org/mathscinet-getitem?mr=#1}{#2}}
\bibliographystyle{amsalphahyper}
\bibliography{biblio}

\end{document}